\documentclass[a4paper]{amsart}
\newif\ifcomments

\usepackage[margin=3.5cm]{geometry}
\usepackage{amsmath, amsthm, amsfonts,stmaryrd,amssymb}
\usepackage{accents}
\usepackage{tikz-cd}
\tikzcdset{row sep/normal = 1cm}
\tikzcdset{column sep/normal = 1cm}
\usepackage{mathrsfs}                      
\usepackage{mathtools}                     
\usepackage{booktabs}
\usepackage{todonotes}
\usepackage[all]{xy}
\usepackage[utf8]{inputenc}
\usepackage{mathtools}                     
\usepackage{todonotes} 
\usepackage{bbm}                         
\usepackage{framed}
\usepackage{graphicx}   
\usepackage{hyperref} 
\usepackage{xcolor}
\hypersetup{
    colorlinks=true,
    linkcolor=blue,  
    urlcolor=cyan,
    pdftitle={},
    pdfauthor={},
} 
\usepackage{theoremref}
\usepackage{subfig}

\numberwithin{equation}{section}
\theoremstyle{plain}
\newtheorem{theorem}{Theorem}[section]

\newtheorem{remark}[theorem]{Remark}
\newtheorem{lemma}[theorem]{Lemma}

\newtheorem{proposition}[theorem]{Proposition}

\theoremstyle{definition}



\def\hatt{}
\def\ed{\mathrm{d}}

\def\exp{\operatorname{exp}}

\def\modif#1{#1}

\let\mc=\mathcal

\let\mr=\mathrm

\begin{document}
\date{\today}
\title[A mixed finite element discretization of optimal transport]{A mixed finite element discretization of dynamical optimal transport}

\author[A. Natale]{Andrea Natale}
\address{Andrea Natale (\href{mailto:andrea.natale@u-psud.fr}{\tt andrea.natale@inria.fr}) Inria, Project team Rapsodi, Univ. Lille, CNRS, UMR 8524 - Laboratoire Paul Painlevé, F-59000 Lille, France}
\author[G. Todeschi]{Gabriele Todeschi}
\address{Gabriele Todeschi (\href{mailto:gabriele.todeschi@inria.fr}{\tt gabriele.todeschi@inria.fr}):  Inria, Project team Mokaplan, Universit\'e Paris-Dauphine, PSL Research University, UMR CNRS 7534-Ceremade
}

\maketitle

\begin{abstract}
In this paper we introduce a new class of finite element discretizations of the quadratic optimal transport problem based on its dynamical formulation. These generalize to the finite element setting the finite difference scheme proposed by Papadakis et al. [SIAM J Imaging Sci, 7(1):212–238,2014].
We solve the discrete problem using a proximal splitting approach and we show how to modify this in the presence of regularization terms which are relevant for physical data interpolation.
\end{abstract}

\section{Introduction}

Optimal transport provides a convenient framework for density interpolation as a convex optimization problem. Its most remarkable feature is its sensitivity to horizontal displacement, which generally allows one to retrieve translations when interpolating between two densities. This property has motivated the application of optimal transport to many imaging problems, especially in the context of physical sciences and fluid dynamics. A typical example comes from satellite image interpolation in oceanography. In this case, one is interested in reconstructing the evolution of a quantity of interest such as Sea Surface Temperature (SST) or Sea Surface Height (SSH) between two given observations.
As highlighted in \cite{hug2015multi}, for this type of applications one needs to include appropriate regularization terms to avoid the appearance of unphysical phenomena such as mass concentration in the reconstructed density evolution.


In this paper we propose a finite element approach to solve the dynamical formulation of optimal transport with quadratic cost on unstructured meshes (and therefore can be easily implemented on complex domains) and that can be easily modified to include different type of regularizations which are relevant for the dynamic reconstruction and interpolation of physical quantities. For some choices of finite element spaces, using the framework introduced in \cite{lavenant2019unconditional}, we can prove convergence of our discrete solutions to the ones of the continuous problem.


The dynamical formulation of optimal transport inspired some of the first numerical methods for this problem. This reads as follows: given two probability measures $\rho_0, \rho_1 \in \mc{P}(D)$ on a compact domain $D\subset \mathbb{R}^d$, find the curve $t\in[0,1] \mapsto \rho(t,\cdot) \in \mc{P}(D)$ which solves
\begin{equation}\label{eq:dynamic}
\inf_{\rho,v} \left \{ \int_0^1\int_D  \frac{|v(t,\cdot)|^2}{2}\ed \rho(t,\cdot) \ed t \,;\, \partial_t{\rho}+ \mathrm{div}_x(\rho v) =0 ,\rho(0) = \rho_0,\,\rho(1) = \rho_1 \right\}
\end{equation} 
where $v:[0,1]\times D \rightarrow \mathbb{R}^d$ is a time-dependent velocity field on $D$ tangent to the boundary $\partial D$, and $|\cdot|$ denotes the Euclidean norm. In other words, problem \eqref{eq:dynamic} selects the curve of minimal kinetic energy with fixed endpoints $\rho_0$ and $\rho_1$.

Benamou and Brenier \cite{benamou2000computational} realized that introducing the momentum $m \coloneqq \rho  v$, problem \eqref{eq:dynamic} can be recast into a convex optimization problem in the variables $(\rho, m)$, with a linear constraint, since the continuity equation becomes
\begin{equation}\label{eq:continuity}
\partial_t{\rho} + \mathrm{div}_x m = 0 \,.
\end{equation}
If we define $\sigma \coloneqq (\rho,m)$, regarded as a measure on $[0,1]\times D$, this constraint is equivalent to $\mathrm{div}\,\sigma = 0$, where now $\mathrm{div}$ denotes the divergence operator on the space-time domain $[0,1] \times D$. Introducing the dual variable $q = (a,b)$ where $a \in C([0,1]\times D)$ and $b \in C([0,1]\times D;\mathbb{R}^{d})$, the kinetic energy minimized in \eqref{eq:dynamic} can be written in the form 
\[
\sup_q \left \{   \int_0^1 \int_D q \cdot \ed \sigma \,;\, a+\frac{|b|^2}{2} \leq 0  \right\}.
\]
Combining this expression with \eqref{eq:dynamic} we obtain a saddle point problem in the variables $(q,\sigma)$ with a nonlinear constraint on $q$ and a linear one on $\sigma$.
 
The numerical method proposed in \cite{benamou2000computational} involves discretizing $q$ and $\sigma$ by their values on a regular grid, and expressing the constraint on $\sigma$ via a Lagrange multiplier; then the dual problem can be solved by an Augmented Lagrangian ADMM approach, optimizing separately in $q$ and the Lagrange multiplier and then performing a gradient descent step on $\sigma$. Disregarding the discretization in space-time, the convergence of the method has been studied in \cite{guittet2003time,hug2017convergence}. The same approach was used to discretize different problems related to optimal transport (e.g., gradient flows \cite{benamou2016augmented}, mean field games \cite{benamou2015augmented}, unbalanced optimal transport \cite{gallouet2019unbalanced}) using a finite element discretization in space-time. Importantly, in these cases the numerical method is obtained by discretizing the several steps of the augmented Lagrangian approach rather than as a discrete optimization algorithm. This implies that in general it is difficult to establish the convergence of the discrete algorithms. Moreover,  for these type of methods, convergence results towards the continuous solutions with mesh refinement are only available for specific settings (e.g., the $L^1$-type optimal transport problems studied in \cite{igbida2017augmented}), but they are not available for the optimal transport problem \eqref{eq:dynamic}. 


Papadakis, Peyré, and Oudet proposed in \cite{papadakis2014optimal} a staggered finite difference discretization on regular grids of the optimal transport problem \eqref{eq:dynamic}, and they considered different proximal splitting algorithms to solve it. The computational bottleneck for these methods as well as for the original augmented Lagrangian approach is the \modif{projection onto the space of divergence-free vector fields $\sigma$, which amounts to solving a Poisson equation at each iteration.} This however can be avoided by exploiting the Helmholtz decomposition of vector fields, as recently showed in \cite{henry2019primal}, or adding regularization terms as in \cite{li2018computations}. Recently, Carrillo and collaborators \cite{carrillo2019primal} proposed a finite difference scheme similar to that in \cite{papadakis2014optimal} (in the context of the discretization of Wasserstein gradient flows), for which they could also prove its convergence with mesh refinement, but only upon strong regularity assumptions on the solutions of the continuous problem.


In \cite{lavenant2018dynamical} a numerical scheme was proposed using tools from finite element and finite volume methods, where one explicitly constructs a duality structure for the discrete variables. Later Lavenant \cite{lavenant2019unconditional} proved convergence  of this scheme, unconditionally with respect to the time/space step size, to the solutions of the optimal transport problem, proposing a general framework for convergence of discretizations of problem \eqref{eq:dynamic} between two arbitrary probability measures. This filled a critical gap for the analysis of discrete dynamical transport models, since previously convergence results were only known in case of sufficiently smooth solutions (as in \cite{carrillo2019primal}) or conditional to the relative time/space step sizes (e.g., in the context of finite volume methods, combining the results in \cite{erbar2020computation} and \cite{gladbach2018scaling}).  


\subsection{Contributions and structure of the paper}

In this paper we propose a mixed finite element discretization of \eqref{eq:dynamic} which generalizes to the finite element setting the finite difference scheme proposed by Papadakis et al.\ \cite{papadakis2014optimal}. We derive our method by discretizing a saddle point formulation of the dynamic optimal transport problem on Hilbert spaces, where one looks for a solution $(q, \sigma) \in L^2([0,1]\times D;\mathbb{R}^{d+1})^2$. Nonetheless, we stress that the method we obtain is still well-defined when the initial and final data are arbitrary probability measures. By using $H(\mathrm{div})$-conforming spaces for the variable $\sigma$, we are able to construct discrete solutions that satisfy exactly the weak form of the continuity equation \eqref{eq:continuity}.

\modif{Using the framework of \cite{lavenant2019unconditional}, we also show that our discrete solutions, for specific choices of finite element spaces, converge towards the solutions of the optimal transport problem between two arbitrary measures, and therefore even when the solution $\sigma$ is only a measure (see Theorem \ref{th:convergence}). Such a result carries over also to a slight modification of the finite difference scheme proposed in  \cite{papadakis2014optimal}, which can be viewed as a particular instance of our discretization on a uniform quadrilateral grid (see Remark \ref{rm:convpeyre}).}

\modif{
Finally, as in \cite{papadakis2014optimal}, we solve the discrete problem using a proximal splitting algorithm \cite{pock2009algorithm}. Importantly, this is not only a discretization of the same algorithm applied to the continuous saddle point formulation as in previous works, but also a genuine optimization scheme applied to the finite dimensional problem. Furthermore, we observe numerically that the proposed modification of the finite difference scheme in \cite{papadakis2014optimal} (which we derived to prove convergence with mesh refinement) also yields a remarkable speedup for the convergence of the proximal splitting algorithm itself, keeping approximately the same computational cost per iteration.}

The paper is structured as follows. We establish the notation in Section \ref{sec:notation}. In Section \ref{sec:dynamical} we give the precise formulation of problem \eqref{eq:dynamic} and describe the proximal splitting algorithm applied to the continuous problem in the Hilbert space setting. In Section \ref{sec:fe} we introduce and discuss the main finite element tools we use for our method. In Section \ref{sec:discrete} we define our finite element discretization of problem \eqref{eq:dynamic} and state the convergence result. In Section \ref{sec:proximal} we detail the steps required for solving our discrete optimal transport problem with a proximal splitting algorithm. In Section \ref{sec:regularization} we describe how to introduce regularization terms in the formulation. Finally in Section \ref{sec:num} we present some numerical results.




\section{Notation}\label{sec:notation}
Throughout the paper we will denote by $D\subset \mathbb{R}^d$ a convex polytope, with $d\in\{2,3\}$, and by $\Omega \coloneqq [0,1] \times D$ the space-time domain. For differential operators such as $\nabla$ or $\mathrm{div}$, we use the subscript $x$ to emphasize that these are defined on $D$ rather than $\Omega$, but we will drop this subscript when this is clear from the context.  

We use the standard notation for Sobolev spaces on $D$ or $\Omega$. In particular, $L^p(D;\mathbb{R}^d)$ denotes the space of functions $f:D\rightarrow \mathbb{R}^d$ whose Euclidean norm $|f|$ is in $L^p(D)$. We use a similar notation for functions taking values on a subset $K \subset \mathbb{R}^d$, or defined on $\Omega$. We denote by $H(\mathrm{div};D)$ the space of vector fields $f:D \rightarrow \mathbb{R}^d$ in $L^2(D;\mathbb{R}^d)$ whose divergence is in $L^2(D)$. Similarly, $H(\mathrm{div}; \Omega)$ the space of vector fields $f: \Omega \rightarrow \mathbb{R}^{d+1}$ in $L^2(\Omega;\mathbb{R}^{d+1})$ whose divergence is in $L^2(\Omega)$.

Finally, we denote by $\mc{M}(D)$ the set of finite signed measures on $D$,  by $\mc{M}_+(D) \subset \mc{M}(D)$ the convex subset of positive measures; by $\mc{P}(D) \subset \mc{M}_+(D)$ the set of positive measures of total mass equal to one; and by $C(D)$ the space of continuous functions on $D$. We use a similar notation for the spaces of measures and continuous functions on $\Omega$.  We use $\langle \cdot,\cdot \rangle$ to denote either the duality pairing between measures and continuous functions or the $L^2$ inner product, on either $D$ or $\Omega$, according to the context.    

\section{Dynamical formulation of optimal transport}\label{sec:dynamical}

The dynamical optimal transport problem \eqref{eq:dynamic} can be formulated as a saddle point problem on the space of measures $\sigma \coloneqq (\rho,m)\in \mc{M}(\Omega) \times \mc{M}(\Omega)^{d}$. This can be written as follows
\begin{equation}\label{eq:BB}
\inf_{\sigma \in \mc{C}} \mc{A}(\sigma) ,\quad \mc{A}(\sigma) \coloneqq \sup_{q \in C(\Omega; K)} \langle q, \sigma\rangle,
\end{equation}
where $\mc{C}$ is the set of measures $\sigma \in \mc{M}(\Omega)^{d+1}$ satisfying $\mr{div}\, \sigma = 0$ in distributional sense with boundary conditions 
\begin{equation}\label{eq:boundary}
\sigma \cdot n_{\partial \Omega} = \mc{X} \,, \quad  \mc{X} \coloneqq 
\left \{ 
\begin{array}{ll} 
\rho_0 & \text{ on } \{0\} \times D,\\ 
\rho_1 & \text{ on } \{1\} \times D,\\
0 & \text{ otherwise, } 
\end{array}
\right.
\end{equation}   
with $\rho_0,\rho_1 \in \mc{P}(D)$, and where $C(\Omega; K)$ is the space of continuous functions on $\Omega$ taking value in the convex set
\begin{equation}\label{eq:K}
K \coloneqq \left \{ (a,b) \in  \mathbb{R}\times \mathbb{R}^{d} \,; \, a +\frac{|b|^2}{2} \leq 0 \right \}.
\end{equation}
It will be convenient to treat time and space as separate variables. In particular we will also use the action defined by
\[
A(\rho,m) \coloneqq \sup_{(a,b) \in C(D; K)} \langle \rho,a\rangle + \langle m,b\rangle\,,
\]
for any $(\rho,m) \in \mathcal{M}(D)^{d+1}$. Then, $A(\rho,m)$ is finite if and only if $m$ has a density with respect to $\rho$ and in that case $A(\rho,m) = \int_D B(\rho,m) $, where $B:\mathbb{R} \times \mathbb{R}^d \rightarrow [0, +\infty]$ is the function given by
\[
B(a,b) \coloneqq \left\{
\begin{array}{ll}
\frac{|b|^2}{2a} & \text{if } a>0, \\
0 & \text{if } a=0, b=0, \\
+\infty & \text{if } a =0,  b\neq 0 \text{ or } a<0\,.
\end{array}
\right.
\]
Due to the definition of the function $B$, any saddle point of problem (\ref{eq:BB}) must satisfy $\rho\ge0$.

The value of the infimum of problem \eqref{eq:BB} coincides with $W^2_2(\rho_0,\rho_1)/2$, where $W_2(\cdot,\cdot)$ denotes the Wasserstein distance associated with the $L^2$ cost (see Theorem 5.28 in \cite{santambrogio2015optimal}).  Moreover the infimum itself is attained by a measure $\sigma=(\rho,m)$, where $\rho$ is known as the Wasserstein geodesic between $\rho_0$ and $\rho_1$ (see proposition 5.32 in \cite{santambrogio2015optimal}).
We refer the reader to \cite{santambrogio2015optimal} for more details on the links between the dynamical formulation and the Wasserstein distance.

\subsection{Hilbert space setting and proximal splitting}\label{sec:ham}
Before discussing the discretization of problem \eqref{eq:BB}, we review its reformulation on Hilbert spaces, and discuss the convergence of the proximal splitting algorithm.  

\begin{proposition}[Guittet \cite{guittet2003time}; Hug et al. \cite{hug2017convergence}] \label{prop:hilbert}
Suppose $\rho_0,\rho_1 \in L^2(D)$. Then problem \eqref{eq:BB} is equivalent to
\begin{equation}\label{eq:BB1}
\inf_{\sigma \in \mc{C}} \sup_{q \in L^2(\Omega; K)} \langle q, \sigma\rangle\,,
\end{equation}
where $\mc{C}$ is the set of functions $\sigma \in H(\mr{div};\Omega)$ satisfying $\mr{div}\, \sigma = 0$ in weak sense with boundary conditions given by \eqref{eq:boundary}. Moreover, assuming that $\mathrm{supp}(\rho_0) \cup \mathrm{supp}(\rho_1) \subset \accentset{\circ}{D}$, there exists a saddle point $(\sigma^*,q^*) \in \mc{C} \times L^2(\Omega;K)$ solving problem \eqref{eq:BB1}.
\end{proposition}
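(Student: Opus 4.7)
The strategy is to compare the two infima by mutual approximation and then extract a saddle point by convex duality, with the interior support hypothesis serving as a constraint qualification.

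For the equivalence of the infima, the inequality $\inf\eqref{eq:BB1}\geq\inf\eqref{eq:BB}$ is immediate: every $\sigma\in H(\mathrm{div};\Omega)$ satisfying the constraints of \eqref{eq:BB1} defines, via its Lebesgue density, an admissible competitor for \eqref{eq:BB} with the same action. For the reverse direction, given $\sigma=(\rho,m)\in\mc{C}$ with $\mc{A}(\sigma)<+\infty$ I would construct an $L^2$ competitor in three steps: first extend $\sigma$ to $\mathbb{R}\times D$ by setting $(\rho_0,0)$ for $t<0$ and $(\rho_1,0)$ for $t>1$, which preserves the distributional continuity equation and keeps the temporal slices in $L^2$; next mollify in space-time to obtain a smooth, divergence-free $\sigma^\epsilon$; finally correct the altered temporal traces on strips of width $\epsilon$ near $t=0,1$ by a short bridge back to $\rho_0$ and $\rho_1$. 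The $L^2$ hypothesis on the endpoints combined with joint convexity of $B(a,b)=|b|^2/(2a)$ allows one to bound the action of the corrective pieces uniformly in $\epsilon$; together with lower semicontinuity of $\mc{A}$ under narrow convergence this yields $\limsup_\epsilon \mc{A}(\sigma^\epsilon)\leq \mc{A}(\sigma)$ and closes the inequality.

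For the existence of a saddle point I would cast \eqref{eq:BB1} in the Fenchel--Rockafellar form $\inf_\sigma F(\sigma)+G(\Lambda\sigma)$, where $F(\sigma)=\int_\Omega B(\sigma)$ is the action on $L^2(\Omega;\mathbb{R}^{d+1})$, $\Lambda$ is the linear operator $\sigma\mapsto(\mathrm{div}\sigma,\sigma\cdot n_{\partial\Omega})$, and $G$ is the indicator of the affine condition $(0,\mc{X})$. Both $F$ and $G$ are proper, convex, and lower semicontinuous. The interior support assumption $\mathrm{supp}(\rho_0)\cup\mathrm{supp}(\rho_1)\subset \accentset{\circ}{D}$ permits the construction of a feasible $\sigma$ whose density $\rho$ is bounded below on $\Omega$ and whose momentum $m$ is smooth and bounded (for instance by routing two smooth transports through a compactly supported positive bump covering $\mathrm{supp}(\rho_0)\cup\mathrm{supp}(\rho_1)$). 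At such a $\sigma$ the functional $F$ is continuous, which gives the qualification hypothesis of the Fenchel--Rockafellar theorem; one then obtains attainment of a dual maximizer $q^*\in L^2(\Omega;K)$. Primal attainment of $\sigma^*$ follows from the direct method, using the coercivity of $F$ on $L^2$ (the density $\rho$ is controlled in $L^1$ by the boundary mass and $|m|^2$ is controlled by $2F(\sigma)\|\rho\|_\infty$ once $\rho$ is bounded above) together with weak closedness of $\mc{C}$ in $H(\mathrm{div};\Omega)$.

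The main obstacle is the approximation estimate on the temporal boundary strips: since $B$ is singular where $\rho=0$, a careless correction can blow up the action when $\rho_0$ or $\rho_1$ vanishes on sets of positive measure. Controlling this correction is exactly where the $L^2$ assumption on the endpoints enters, and the analogous argument would fail for data with strictly less integrability, explaining why the Hilbert formulation is tied to $L^2$ boundary conditions.
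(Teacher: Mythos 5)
Your first half (equivalence of the two infima) follows essentially the route the paper has in mind — regularize $\sigma$, fix the temporal traces, and pass between continuous and $L^2$ test functions — though two technical points are off as stated: lower semicontinuity of $\mc{A}$ under narrow convergence gives $\liminf_\epsilon \mc{A}(\sigma^\epsilon)\geq \mc{A}(\sigma)$, i.e.\ the wrong direction (the bound $\mc{A}(\sigma^\epsilon)\leq \mc{A}(\sigma)+o(1)$ must come from Jensen's inequality for the jointly convex integrand under mollification), and mollifying in space destroys the no-flux condition on the lateral boundary $[0,1]\times\partial D$, so one must first dilate $\sigma$ towards an interior point of the convex domain before mollifying. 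These are repairable. The paper itself does not reprove this part: it invokes a regularization argument plus Lusin's theorem (as in Proposition 5.18 of Santambrogio) and defers the saddle-point existence entirely to Hug et al., which is precisely where your proposal has a genuine gap.

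The gap is the constraint qualification in your Fenchel--Rockafellar step. The functional $F(\sigma)=\int_\Omega B(\sigma)$ is \emph{nowhere} continuous on $L^2(\Omega;\mathbb{R}^{d+1})$: its domain has empty interior, since any $L^2$-neighbourhood of a feasible $\sigma$ with density bounded below contains fields whose density component is negative on a set of small measure (e.g.\ $\rho-n\chi_{E_n}$ with $|E_n|=n^{-3}$), where $B=+\infty$. So the hypothesis ``$F$ is continuous at a feasible point'' is false, strong duality with dual attainment does not follow from the standard theorem, and no choice of nice interpolating $\sigma$ rescues it; the same obstruction rules out swapping the roles of $F$ and the indicator of $\mc{C}$, which also has empty interior. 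Obtaining $q^*\in L^2(\Omega;K)$ is in fact equivalent to producing an optimal space-time potential $\phi$ with $\nabla_{t,x}\phi\in L^2$, and this regularity statement — where the hypothesis $\mathrm{supp}(\rho_0)\cup\mathrm{supp}(\rho_1)\subset\accentset{\circ}{D}$ genuinely enters — is the delicate content of the cited work of Hug et al.; it cannot be retrieved by a soft duality argument. Your primal attainment step is also unsupported: bounded action together with the prescribed marginals controls $\rho$ only in $L^1$ and $m$ only in a weighted sense, so a minimizing sequence need not be bounded in $L^2$ (the proviso ``once $\rho$ is bounded above'' has no justification when $\rho_0,\rho_1$ are merely $L^2$). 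The natural repair is not the direct method in $L^2$ but the observation that the Wasserstein geodesic itself is admissible for \eqref{eq:BB1}: $\int\rho_t^2$ is displacement convex, hence bounded by the endpoint values, and the geodesic velocity is bounded by $\mathrm{diam}(D)$, so $(\rho,m)\in L^2(\Omega;\mathbb{R}^{d+1})$.
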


The equivalence of problem \eqref{eq:BB1} to \eqref{eq:BB} can be easily deduced by a regularization argument on $\sigma$ and then applying Lusin's theorem as in Proposition 5.18 in \cite{santambrogio2015optimal}. The proof for the existence of a saddle point problem is more delicate and can be found in \cite{hug2017convergence}.

In order to apply a proximal splitting algorithm to solve problem \eqref{eq:BB1}, we first write it in the form
\begin{equation}\label{eq:BB2}
\inf_{\sigma \in L^2(\Omega;\mathbb{R}^{d+1}) } \sup_{q \in L^2(\Omega; \mathbb{R}^{d+1})} 
\langle q, \sigma\rangle + \iota_{\mc{C }} (\sigma) - \iota_{\mc{K}} (q)\,,
\end{equation}
where $\iota$ denotes the convex indicator function and
\[
\mc{K} \coloneqq L^2(\Omega;K) = \{ q \in L^2(\Omega;\mathbb{R}^{d+1})\,; \, q \in K ~ \text{a.e.} \}.
\]
Note in particular that $\mc{C}$ and $\mc{K}$ are closed convex sets of $L^2$. 

We apply to \eqref{eq:BB2} the primal-dual projection algorithm proposed in \cite{pock2009algorithm}. In particular,  given $\tau_1, \tau_2>0$ and an admissible $(\sigma^0, q^0)\in \mc{C}\times \mc{K}$, we define the sequence $\{(\sigma^k,q^k)\}_k$ by the two-step algorithm:
\begin{subequations}\label{eq:PS}
\begin{align}
\text{\textbf{Step 1}}: \qquad & \sigma^{k+1} = P_{\mc{C}} (\sigma^k- \tau_1 q^k)\,.\\
\text{\textbf{Step 2}}: \qquad &q^{k+1} = P_{\mc{K}}(q^k +\tau_2(2 \sigma^{k+1}-\sigma^k))\,.
\end{align}
\end{subequations}
where $P_{\mc{C}}$ and $P_{\mc{K}}$ are the $L^2$ projections on the closed convex sets $\mc{C}$ and $\mc{K}$, respectively. The projection onto $\mc{C}$ amounts to computing the Helmholtz decomposition of $\sigma^k- \tau_1 q^k$ and selecting the divergence-free part, whereas the projection onto $\mc{K}$ is a pointwise projection applied to a representative of $q^k +\tau_2(2 \sigma^{k+1}-\sigma^k)$.

The proof of convergence in  \cite{pock2009algorithm} holds also in our setting. More precisely, the following convergence theorem holds.
\begin{theorem}[Pock et al.\ \cite{pock2009algorithm}]\label{th:cp}
If $\tau_1 \tau_2 <1$ then $(\sigma^k, q^k)\rightarrow (\sigma^*, q^*)\in \mc{C}\times \mc{K}$ which solves \eqref{eq:BB1}.
\end{theorem}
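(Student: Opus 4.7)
The plan is to recognize problem \eqref{eq:BB2} as an instance of the general primal-dual saddle point template treated in \cite{pock2009algorithm}, namely
\[
\inf_{\sigma \in H_1} \sup_{q \in H_2} \langle Kq, \sigma\rangle + F(\sigma) - G(q),
\]
with $H_1=H_2=L^2(\Omega;\mathbb{R}^{d+1})$, $K=\mathrm{Id}$, $F=\iota_{\mc{C}}$ and $G=\iota_{\mc{K}}$. The first step is therefore to verify that all hypotheses of the abstract Hilbert space result of \cite{pock2009algorithm} are met: $\mc{C}$ is the preimage of the single point $\mc{X}$ under the continuous trace/divergence map on $H(\mathrm{div};\Omega)$, so it is closed and convex in $L^2$, and $\mc{K}=L^2(\Omega;K)$ is clearly closed and convex since $K$ is closed and convex. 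Hence $F$ and $G$ are proper, convex and lower semicontinuous, nonemptiness of $\mc{C}$ being guaranteed by the existence result of Proposition \ref{prop:hilbert}. The operator norm of $K=\mathrm{Id}$ equals $1$, so the step size condition of \cite{pock2009algorithm}, which reads $\tau_1\tau_2\|K\|^2<1$, reduces exactly to $\tau_1\tau_2<1$.

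Next I would check that steps \eqref{eq:PS} really are the iterations of the primal–dual algorithm applied to this template. Since $F$ and $G$ are indicator functions, the proximal maps $\mathrm{prox}_{\tau_1 F}$ and $\mathrm{prox}_{\tau_2 G}$ coincide with the $L^2$-projections $P_{\mc{C}}$ and $P_{\mc{K}}$, so the update
\[
\sigma^{k+1}=\mathrm{prox}_{\tau_1 F}(\sigma^k-\tau_1 K^* q^k),\qquad q^{k+1}=\mathrm{prox}_{\tau_2 G}(q^k+\tau_2 K(2\sigma^{k+1}-\sigma^k))
\]
is precisely \eqref{eq:PS}.

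With the setup in place, I would then invoke the abstract convergence statement from \cite{pock2009algorithm}. Its proof, which I would sketch briefly for the reader's convenience, proceeds by writing the subdifferential inclusions characterizing $\sigma^{k+1}$ and $q^{k+1}$, pairing them against an arbitrary saddle point $(\sigma^*,q^*)$ (whose existence is granted by Proposition \ref{prop:hilbert} under the support assumption on $\rho_0,\rho_1$), and combining them to obtain the Fejér-type estimate
\[
\tfrac{1}{\tau_1}\|\sigma^{k+1}-\sigma^*\|_{L^2}^2+\tfrac{1}{\tau_2}\|q^{k+1}-q^*\|_{L^2}^2\leq \tfrac{1}{\tau_1}\|\sigma^{k}-\sigma^*\|_{L^2}^2+\tfrac{1}{\tau_2}\|q^{k}-q^*\|_{L^2}^2-R_k,
\]
where the residual $R_k$ controls $\|\sigma^{k+1}-\sigma^k\|^2$ and $\|q^{k+1}-q^k\|^2$ up to the cross term $2\langle q^{k+1}-q^k,\sigma^{k+1}-\sigma^k\rangle$. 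The latter is bounded via Cauchy–Schwarz and Young's inequality, and here is where the condition $\tau_1\tau_2<1$ enters to keep $R_k$ nonnegative with a strictly positive coefficient on the successive increments. Summability of $R_k$ then forces $\sigma^{k+1}-\sigma^k\to 0$ and $q^{k+1}-q^k\to 0$ in $L^2$, while the Fejér estimate gives boundedness.

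Finally, to conclude convergence of the whole sequence, I would extract a weakly convergent subsequence, use the closedness of $\mc{C}$ and $\mc{K}$ together with the subdifferential inclusions (passing to the limit is routine since the increments vanish) to identify the weak limit as a saddle point, and apply Opial's lemma with the weighted norm $\tau_1^{-1}\|\cdot\|^2+\tau_2^{-1}\|\cdot\|^2$ to upgrade subsequential convergence to weak convergence of the entire sequence. The main subtlety, and where I would take the most care, is the infinite-dimensional nature of the setting: one only obtains weak convergence in $L^2$ in general, so the statement of Theorem \ref{th:cp} should be understood in that sense (a remark I would make explicit, noting that strong convergence can sometimes be recovered a posteriori using compactness arising from the continuity equation constraint, but is not needed for the arguments later in the paper).
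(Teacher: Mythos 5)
Your proposal is correct and follows essentially the same route as the paper, which simply invokes the abstract primal--dual convergence result of Pock et al.\ after noting that $\mc{C}$ and $\mc{K}$ are closed convex subsets of $L^2$, that the proximal maps of their indicators are the projections $P_{\mc{C}}$, $P_{\mc{K}}$, that the coupling operator is the identity (so $\tau_1\tau_2\|K\|^2<1$ reduces to $\tau_1\tau_2<1$), and that a saddle point exists by Proposition \ref{prop:hilbert}. Your explicit caveat that the convergence is in general only weak in the infinite-dimensional $L^2$ setting is a sound reading, consistent with the paper's later remark that the scheme can be viewed as a proximal point method for a monotone operator as in \cite{chambolle2016introduction}.
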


Discretizing problem \eqref{eq:BB2}, and consequently the proximal splitting algorithm \eqref{eq:PS}, with finite elements requires choosing finite-dimensional spaces for $\sigma$ and $q$ so that the steps in \eqref{eq:PS} are well-posed and \modif{computationally feasible}. However, satisfying these requirements is not enough to guarantee convergence of the discrete solutions to the ones of the infinite dimensional problem. Hereafter we will identify a class of finite element spaces for which the theory developed in \cite{lavenant2019unconditional} applies, which allows us to deduce convergence to the solutions of problem \eqref{eq:BB}, i.e.\ even when $\rho_0$ and $\rho_1$ are arbitrary probability measures and the Hilbert space setting presented in this section is not well-defined.

\section{Mixed finite element setting}\label{sec:fe}

\subsection{Finite element spaces on $D$}\label{sec:femd}
We recall that $D$ is a convex polytope in $\mathbb{R}^d$, with $d\in\{2,3\}$. We consider a triangulation of $D$ which we denote $\mc{T}_h$, i.e. a decomposition of $D$ in either simplicial or quadrilateral \modif{(disjoint)} elements, where $h$ is the maximum diameter of the elements in $\mc{T}_h$. We assume that there exists a constant $C_{mesh}$ such that
\begin{equation}\label{eq:Cmesh}
|h|^d \leq C_{mesh} |T|\,, \quad \forall\, T\in \mc{T}_h\,.
\end{equation}
This implies that the mesh is quasiuniform, meaning that the ratio of any two element diameters is uniformly bounded by a constant depending only on $C_{mesh}$, and shape-regular,
that is, for each element $T\in \mc{T}_h$, the ratio of its diameter and the diameter of the largest inscribed ball is uniformly bounded by a constant depending only on $C_{mesh}$ (see, e.g., \cite{arnold2006finite}). 
 
For any $T\in \mc{T}_h$, we denote by $\mc{P}_k(T)$ the space of polynomials of degree up to $k$ on $T$. If $T$ is a quadrilateral element, i.e., in general, if $T$ is obtained by an affine transformation $\phi:I^d \rightarrow T $ where $I$ is the unit interval, then we define $\mc{P}_{k_1,\ldots k_d}(I^d)\coloneqq \mc{P}_{k_1}(I) \otimes \ldots \otimes \mc{P}_{k_d}(I)$ and $\mc{P}_{k_1,\ldots k_d}(T) \coloneqq \mc{P}_{k_1,\ldots k_d}(I^d) \circ \phi^{-1}$.

We now define the finite element spaces $Q_h$ and $V_h$ which will serve to construct approximations of the density $\rho$ and the momentum $m$, respectively. We set
\[ 
Q_{h} \coloneqq \{ \varphi \in L^{2}(D)\,;\, \varphi|_T \in \mc{P}_0(T), ~ \forall\, T\in \mc{T}_h\},
\]
\[
V_{h} \coloneqq \{ v \in H(\mr{div};D) \,;\, v|_T \in V_h(T), ~ \forall\,  T\in \mc{T}_h\}.
\]
where $V_h(T)$ is the so-called shape function space. We distinguish two cases:
\begin{enumerate}
\item for simplicial elements (triangles or tetrahedrons), we take $V_h(T)$ to be either  
\[
\mc{RT}_0(T) \coloneqq \{ v = v_0 + v_1 \hat{x} \,;\, v_0 \in (\mc{P}_0(T))^d\,, v_1\in\mc{P}_0(T) \} \subset (\mc{P}_1(T))^d,
\] 
where $\hat{x} = (x_1,\ldots,x_d)\in (\mc{P}_1(T))^d$,
which generates the lowest order Raviart-Thomas space; or $\mc{BDM}_1(T) = (\mc{P}_1(T))^d$, which generates the lowest order Brezzi-Douglas-Marini $H(\mr{div})$-conforming space;
\item for quadrilateral elements, we set $T = \phi( I^d)$, where $I$ is an interval and $\phi$ an affine transformation, and we take $V_h(T)$ to be the tensor product space which generates the lowest order Raviart-Thomas space on quadrilateral elements. This is defined as follows: 
\[
{\mc{RT}}_{[0]}(T) \coloneqq \left \{ 
\begin{array}{ll} 
\mc{P}_{1,0}(T) e_1 + \mc{P}_{0,1}(T) e_2 & \text{if } d =2\,,\\
\mc{P}_{1,0,0}(T) e_1 + \mc{P}_{0,1,0}(T) e_2 + \mc{P}_{0,0,1}(T) e_3& \text{if } d =3\,,
\end{array}
\right.
\]
where $\{e_i\}_i$ is the basis for $\mathbb{R}^d$ aligned with the edges of $T$.
\end{enumerate}

In other words, the space $V_h$ is chosen as one of the standard lowest order $H(\mr{div})$-conforming spaces. In fact, the property of being piece-wise linear will be crucial in the following, namely to prove the convergence result in Theorem \ref{th:convergence} (see, in particular, Proposition \ref{prop:properties} in the appendix). A graphical representation of the degrees of freedom associated with these spaces is shown in figure \ref{fig:dofs}.

Importantly, with the choices mentioned above, one can define projection operators $\Pi_{Q_h}: L^2(D) \rightarrow Q_h$ and \modif{ $\Pi_{V_h}: \mathcal{V}_D \subset H(\mr{div};D)\rightarrow V_h$ that commute with the divergence operator \cite{arnold2006finite,boffi2013mixed}, where $\mathcal{V}_D$ is a dense subset of sufficiently smooth vector fields. 
By an appropriate regularization procedure of such operators (see, e.g., Section 5.4 in \cite{arnold2006finite}), one can construct bounded projections $\tilde{\Pi}_{Q_h}: L^2(D) \rightarrow Q_h$ and $\tilde{\Pi}_{V_h}: H(\mr{div};D)\rightarrow V_h$ satisfying a similar property.}
In other words, the following diagram commutes
\[
\centering
\begin{tikzcd}
H(\mr{div};D) \arrow[d,"\tilde{\Pi}_{V_h}"]\arrow[r, "\mr{div}"] & L^2(D)\arrow[d,"\tilde{\Pi}_{Q_h}"] \\V_h\arrow[r, "\mr{div}"] & Q_h
\end{tikzcd}
\]
As a consequence, the divergence operator is surjective onto $Q_h$ when restricted on $V_h$, i.e.\ $\mr{div}\, V_h = Q_h$.
Finally, we let $Q_h^+\subset Q_h$ the convex subset of non-negative piecewise constant functions.

\begin{remark}\label{rem:interpolators}
\modif{For the proof of Theorem \ref{th:convergence} in the appendix, we will consider as commuting projections $\Pi_{V_h}$ and $\Pi_{Q_h}$ the canonical projections defined in Section 5.2 of \cite{arnold2006finite}. Here, we will only need the explicit definition of $\Pi_{Q_h}$,} which is given by 
\begin{equation}\label{eq:PiQh}
\Pi_{Q_h} \rho |_T = \frac{1}{|T|} \int_T \rho \,,
\end{equation}
for any $T\in \mc{T}_h$. Note, in particular, that  $\Pi_{Q_h}$ is well-defined on $\mc{M}(D)$ and its restriction on $\mc{M}_+(D)$ is surjective onto $Q_h^+$. 
%
%
\end{remark}

\begin{figure}
\centering
\subfloat[{$\mc{RT}_0$}]{\includegraphics[width=.22\textwidth]{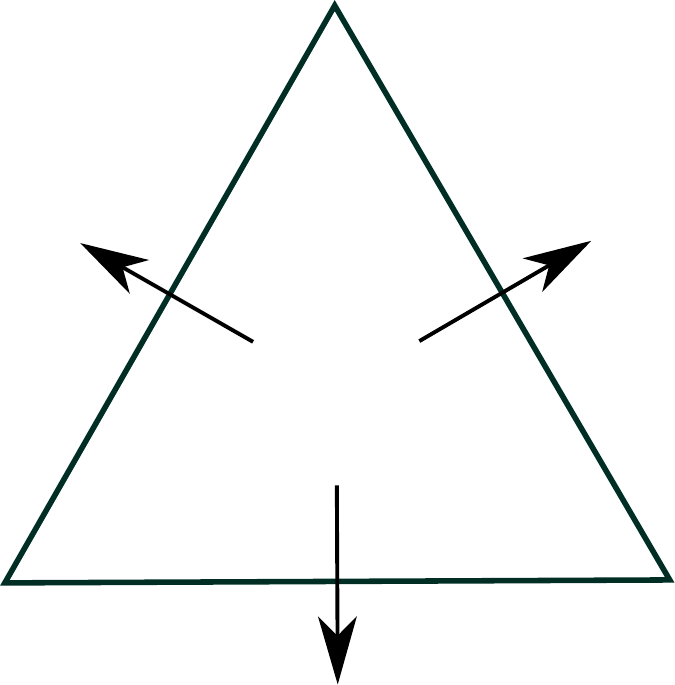}} \qquad
\subfloat[{$\mc{BDM}_1$}]{\includegraphics[width=.22\textwidth]{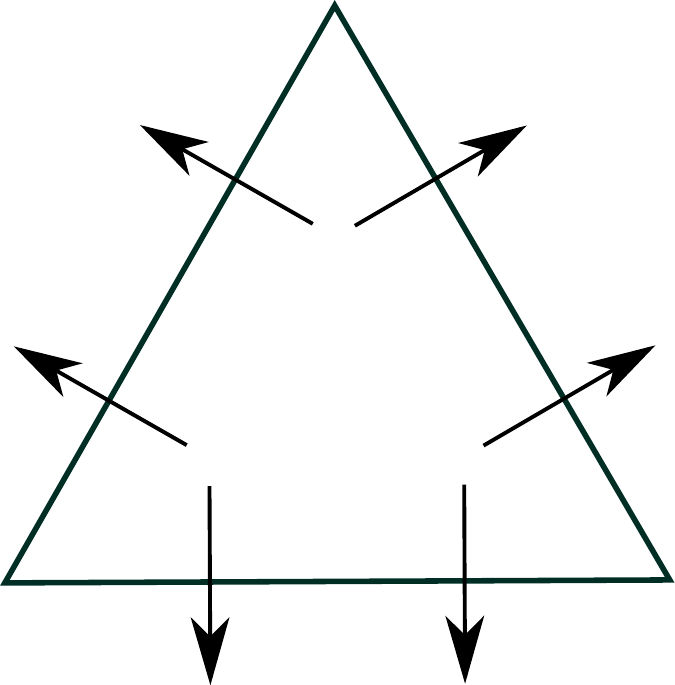}} \qquad
\subfloat[{${\mc{RT}}_{[0]}$}]{\includegraphics[width=.22\textwidth]{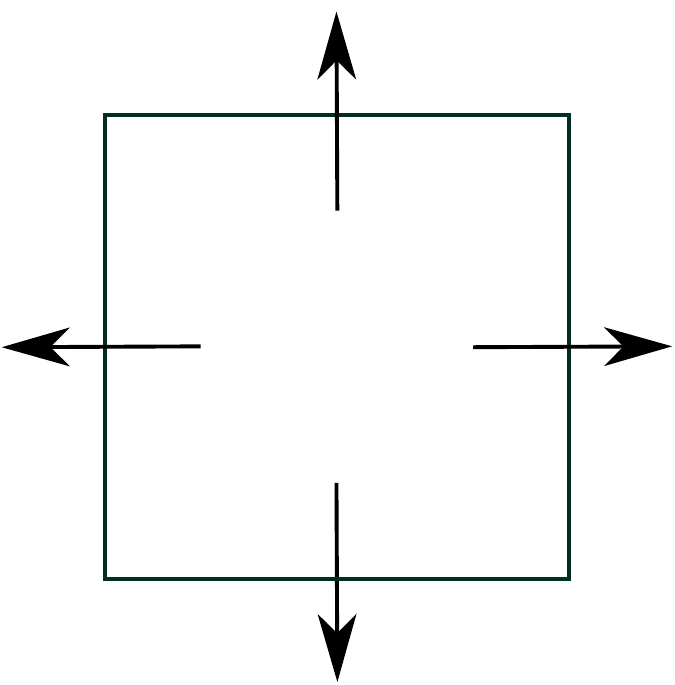}}
\caption{Degrees of freedom for different choices of shape function space $V_h(T)$}
\label{fig:dofs}
\end{figure}

\subsection{Finite element spaces on $\Omega$}

We now introduce finite element spaces on the space-time domain $[0,1]\times D$. We first define a decomposition $\hatt{\mc{T}}_{h,\tau}$, obtained by a tensor product construction. In other words, we assume that $\hatt{\mc{T}}_{h,\tau}$ is obtained by tensor product of a triangulation $\mc{T}_h$ of $D$ and a decocomposition of $[0,1]$ of maximum size $\tau$, so that any element $S\in \hatt{\mc{T}}_{h,\tau}$ is of the form $S = [t_0,t_1] \otimes T$ where $T\in \mc{T}_h$.

We now define the finite element spaces $F_{h,\tau}$ and $Z_{h,\tau}$ on the space-time domain. The space $Z_{h,\tau}$ will be constructed using the standard tensor product construction based on the spaces $Q_h$ and $V_h$ defined on $D$, and continuous $\mc{P}_1$ and discontinuous $\mc{P}_0$ spaces on $[0,1]$. In our discretization, the space-time vector field $(\rho,m)$ will be an element of $Z_{h,\tau}$ whereas $F_{h,\tau}$ will be the space of discrete Lagrange multipliers associated with the continuity equation, which is equivalent to the constraint that the space-time divergence of $(\rho,m)$ is zero. 

More precisely, we define 
\[
F_{h,\tau} \coloneqq \{ \phi\in L^{2}(\Omega)\,;\, \phi|_S \in \mc{P}_0(\hatt{S}), ~ \forall\, \hatt{S}\in \hatt{\mc{T}}_{h,\tau}\},
\]
\[
Z_{h,\tau} \coloneqq \{ v \in H(\mr{div};\Omega) \,;\, v|_S \in Z_{h,\tau}(\hatt{S}), ~ \forall\,  \hatt{S}\in \hatt{\mc{T}}_{h,\tau}\}.
\]
For $\hatt{S} = [t_0,t_1] \otimes T$, the shape function space $Z_{h,\tau}(\hatt{S})$ is built by defining a shape function space for the density, in the space-time domain, which is given by
\[
Q_{h,\tau}(S) \coloneqq \mc{P}_1([t_0,t_1])\otimes Q_h(T) 
\]
(i.e.\ the density is piecewise linear in time), and a shape function space for the momentum, in the space-time domain, which is given by
\[
V_{h,\tau}(S) \coloneqq \mc{P}_0([t_0,t_1])\otimes V_{h}(T) \,
\]
(i.e.\ the momentum is piecewise constant in time). Then, we set
\[
Z_{h,\tau}(\hatt{S}) \coloneqq   ( Q_{h,\tau}(S) \,\hat{t} )\oplus V_{h,\tau}(S)\,,
\]
where $\hat{t}$ is the unit vector oriented in the time direction. The spaces $F_{h,\tau}$ and $Z_{h,\tau}$ inherit from $Q_h$ and $V_h$ the commuting diagram property mentioned above. In particular, there exist bounded projections $\tilde{\Pi}_{F_{h,\tau}}: L^2(\Omega) \rightarrow F_{h,\tau}$ and $\tilde{\Pi}_{Z_{h,\tau}}: H(\mr{div};\Omega)\rightarrow Z_{h,\tau}$ for which the following diagram commutes
\begin{equation}\label{eq:commdiag}
\centering
\begin{tikzcd}
H(\mr{div};\Omega) \arrow[d,"\tilde{\Pi}_{Z_{h,\tau}}"]\arrow[r, "\mr{div}"] & L^2(\Omega)\arrow[d,"\tilde{\Pi}_{F_{h,\tau}}"] \\ Z_{h,\tau}\arrow[r, "\mr{div}"] & F_{h,\tau}
\end{tikzcd}
\end{equation}
where the divergence is the one associated with the space-time domain $\Omega$. Then, as before, the divergence operator is surjective onto $F_{h,\tau}$ when restricted on $Z_{h,\tau}$, i.e.\ $\mr{div}\, Z_{h,\tau} = F_{h,\tau}$. Note that the precise definition for the projection operators on tensor product meshes can be found in \cite{Arnold14}.

\subsection{Discrete projection on the divergence-free subspace}\label{sec:proj}
Denote by $\mc{B}$ the kernel of the divergence operator on $H(\mr{div};\Omega)$. Given $\xi \in L^2(\Omega)$ we define the projection $P_{\mc{B}}( \xi)$ to be the divergence-free vector field $\sigma$ minimizing the $L^2$ distance from $\xi$. This can be obtained solving the following problem for $(\sigma,\phi) \in H(\mr{div};\Omega) \times L^2(\Omega)$
\begin{equation}\label{eq:mixed1}
\left \{
\begin{array}{ll}
\langle \sigma,v \rangle + \langle \phi,\mr{div}\, v \rangle = \langle \xi,v\rangle  & \forall v \in H(\mr{div};\Omega)\,,\\
\langle \mr{div}\, \sigma, \psi \rangle = 0  & \forall \psi \in L^2(\Omega)\,.
\end{array}\right.
\end{equation} 
Let $\mc{B}_{h,\tau}$ be the kernel of the divergence operator restricted on $Z_{h,\tau}$. We define the projection $P_{\mc{B}_{h,\tau}}( \xi)$ to be the divergence-free vector field $\sigma_{h,\tau} \in Z_{h,\tau}$ minimizing the $L^2$ distance from $\xi$. This can be obtained solving the following problem for $(\sigma_{h,\tau} ,\hatt{\phi}_{h,\tau} ) \in Z_{h,\tau}  \times F_{h,\tau}$
\begin{equation}\label{eq:mixed2}
\left \{
\begin{array}{ll}
\langle \hatt{\sigma}_{h,\tau} ,v \rangle + \langle \hatt{\phi}_{h,\tau} ,\mr{div}\, v \rangle = \langle \xi,v\rangle  & \forall v \in Z_{h,\tau}\,,\\
\langle \mr{div}\, \hatt{\sigma}_{h,\tau} , \psi \rangle = 0  & \forall \psi \in F_{h,\tau}\,.
\end{array}\right.
\end{equation} 
The commuting diagram \eqref{eq:commdiag} implies well-posedness of the discrete system. In particular, it implies the following inf-sup condition: there exists a constant $\beta>0$ independent of $h$ and $\tau$ such that
\[
\inf_{\phi \in F_{h,\tau}} \sup_{\sigma \in Z_{h,\tau}} \frac{\langle \phi,\mathrm{div}\, \sigma \rangle}{\| \sigma \|_{H(\mathrm{div})} \| \phi \|_{L^2} } \geq \beta\,,
\]
see for example proposition 5.4.2 in \cite{boffi2013mixed}.  Then, problem \eqref{eq:mixed2} is well-posed, i.e.\ it has a unique solution $(\sigma_{h,\tau},\phi_{h,\tau})$ which verifies $\sigma_{h,\tau}\in \mathcal{B}$ and
\begin{subequations}\label{eq:stabest}
\[ \| \sigma_{h,\tau} \|_{L^2} \leq C_1 \| \xi\|_{L^2} \,,  \]%
\[ \| \phi_{h,\tau} \|_{L^2} \leq C_2 \| \xi\|_{L^2} \,,   \]%
\[ \| \sigma_{h,\tau} -\sigma \|_{L^2} +\| \phi_{h,\tau} -\phi \|_{L^2} \leq C_3 \| \xi_{h,\tau} - \xi \|_{L^2} \,,  \]
\end{subequations}
where $C_1, C_2, C_3>0$ are constants independent of $h$ and $\tau$, $\xi_{h,\tau}$ is the $L^2$ projection of $\xi$ onto $Z_{h,\tau}$ and $(\sigma,\phi)$ is the unique solution of problem \eqref{eq:mixed1} (e.g., these results can be derived as particular cases of Theorems 4.3.2, 5.2.1 and 5.2.5 in \cite{boffi2013mixed}).
%

In the following we will need to compute the discrete version of the $L^2$ projection onto $\mc{C}$. In particular we define
\begin{equation}\label{eq:Cht}
\mc{C}_{h,\tau} \coloneqq \{ \sigma \in \mc{B}_{h,\tau}\,, \,\sigma\cdot n_{\partial \Omega} = \hatt{\mc{X}}_{h,\tau} \}\,,
\end{equation}
where, since $\Pi_{Q_h}$ can be defined on $\mc{M}(D)$ (see equation \eqref{eq:PiQh}), we set
\begin{equation}\label{eq:boundaryd}
\hatt{\mc{X}}_{h,\tau} \coloneqq 
\left \{ 
\begin{array}{ll} 
\Pi_{Q_h} \rho_0 & \text{ on } \{0\} \times D,\\ 
\Pi_{Q_h} \rho_1 & \text{ on } \{1\} \times D,\\
0 & \text{ otherwise. } 
\end{array}
\right.
\end{equation} 
The well-posedness results described above for the $L^2$ projections onto $\mc{B}$ and $\mc{B}_{h,\tau}$ hold also for the $L^2$ projections onto $\mc{C}$ and $\mc{C}_{h,\tau}$ up to adding \modif{Neumann} boundary conditions to the spaces $H(\mr{div};\Omega)$ and $Z_{h,\tau}$, and replacing $L^2(\Omega)$ and $F_{h,\tau}$ by $L^2(\Omega)/ \mathbb{R}$ and $F_{h,\tau}/\mathbb{R}$, respectively.

\section{Discrete dynamical formulation and convergence}\label{sec:discrete}

In this section we formulate the discrete problem and state a convergence result obtained by applying the theory developed in \cite{lavenant2019unconditional}. For this, we need to introduce a space for the discrete dual variable $q$. We adopt the same notation as for the spaces defined in Section \ref{sec:fe}. In particular, we set for $r\in\{0,1\}$,
\[
X_{h}^r \coloneqq \{\phi \in L^2(D)\,;\, \phi|_T \in X_h^r(T), ~ \forall\, T\in \mc{T}_h\}.
\]
The superscript $r$ denotes the polynomial order of the shape function space $X_h^r(T)$.  We distinguish two cases:
\begin{enumerate}
\item for simplicial elements (triangles or tetrahedrons), we take $X_h^r(T)\coloneqq \mc{P}_r(T)$.
\item for quadrilateral elements, we set $T = \phi( I^d)$, where $I$ is an interval and $\phi$ an affine transformation, and we take $X^r_h(T)\coloneqq \mc{P}_r(I)^d\circ \phi^{-1}$.
\end{enumerate}
The associated space-time space is defined by
\[
\hatt{X}_{h,\tau}^r \coloneqq \{\phi \in L^2(\Omega)\,;\, \phi|_S \in \hatt{X}^r_{h,\tau}(\hatt{S}), ~ \forall\, \hatt{S}\in \hatt{\mc{T}}_{h,\tau}\},
\]
with $\hatt{X}_{h,\tau}^r(\hatt{S}) =\mc{P}_0([t_0,t_1]) \otimes X_h^r(T)$. In order to simplify the notation, we will omit the superscript $r$ when not relevant to the discussion.

\begin{remark} The choice $r\in\{0,1\}$ is dictated by computational feasibility of the algorithm. In fact, for these cases, we can compute explicitly the projection on $\mc{K}\cap \hatt{X}^r_{h,\tau}$ \modif{(with respect to appropriate inner products)} as it will be explained in the next section. On the other hand, we restrict ourselves to piecewise constant functions in time since this is crucial for the convergence of the algorithm, as shown in \cite{lavenant2019unconditional}.
\end{remark}

The discrete action (at fixed time) is defined as follows:
\[
A_h(\rho,m) \coloneqq \sup_{(a,b) \in (X_{h})^{d+1}} \{ \langle \rho,a\rangle + \langle m,b\rangle\,;\, (a,b) \in K \, a.e. \}
\]
for any $(\rho,m) \in Q_h \times V_h$. By construction, $A_h:Q_h\times V_h \rightarrow [0, +\infty]$ is a proper convex function $-1$-positively homogeneous in its first variable and $2$-positively homogeneous in its second variable. Moreover, it is non-increasing in its first argument, i.e.\ $A_h(\rho_1+\rho_2,m) \leq A_h(\rho_1,m)$ for any $\rho_1,\rho_2 \in Q_h^+$ and $m\in V_h$. In fact, suppose that $A_h(\rho_1 + \rho_2,m)<+\infty$. Then there exists $(a^*,b^*) \in \modif{(X_h)^{d+1}\cap \mc{K}}$ such that  $\langle \rho_1+\rho_2,a^*\rangle + \langle m,b^*\rangle = A_h(\rho_1+\rho_2,m)$; in particular $a^*\leq 0$. Then
\[
A_h(\rho_1,m) \geq A_h(\rho_1+\rho_2,m)-\langle \rho_2,a^* \rangle \geq A_h(\rho_1+\rho_2,m)\,,
\] 
and by a similar reasoning we obtain that if $A(\rho_1+\rho_2,m) = +\infty$ then we also have $A(\rho_1,m) = +\infty$.

The space-time discretization of problem \eqref{eq:BB} is given by
\begin{equation}\label{eq:BBd}
\inf_{\substack{\sigma \in \mc{C}_{h,\tau},\\\modif{\rho\geq 0}}} \mc{A}_{h,\tau}(\sigma) ,\quad \mc{A}_{h,\tau} (\sigma) \coloneqq \sup_{q \in (X^r_{h,\tau})^{d+1}\cap \mc{K}}  \langle q, \sigma\rangle\,.
\end{equation}
Note that, by definition, $\mc{A}_{h,\tau}$ is convex and non-negative. Therefore, problem \eqref{eq:BBd} always admits minimizers.

Suppose that the time discretization is given by a decomposition of the interval $[0,1]$ in $N$ elements, i.e.\ fixing the points $0=t_0 <t_1 <\ldots < t_{N+1} =1$. Given $\sigma = (\rho,m) \in Z_{h,\tau}$, we can identify the density $\rho$ with the collection $\{ \rho_i\}_{i=0}^{N+1}$ with $\rho_i \in Q_h$, and the momentum $m$ with the collection  $\{ m_i\}_{i=1}^{N+1}$ with $m_i \in V_h$. Since $q$ is piecewise constant in time, we have the following equivalent formulation
\begin{equation}\label{eq:spacetimeaction}
\mc{A}_{h,\tau}(\sigma) = \sum_{i=1}^{N+1}A_h \left(\frac{\rho_i+\rho_{i-1}}{2},m_i \right) |t_{i}-t_{i-1}| \,.
\end{equation}
Note that in order to obtain \eqref{eq:spacetimeaction} from \eqref{eq:BBd}, we relied on the particular choice of finite element spaces for density (piecewise linear in time), momentum (piecewise constant in time) and the corresponding dual variables (piecewise constant in time).

\begin{remark}[Continuity constraint] The choice of a  $H(\mathrm{div})$-conforming finite element space for $\sigma$  implies that the weak form of the continuity equation $\partial_t{\rho} + \mathrm{div}_x\, m =0$ is satisfied exactly by any solution of the  discrete saddle point problem \eqref{eq:BBd} (this is also directly implied by the definition of the constraint set $\mathcal{C}_{h,\tau}$ in \eqref{eq:Cht}) .
\end{remark}

\begin{remark}[Positivity constraint] \label{rem:pos} \modif{Note that removing the positivity constraint in the formulation \eqref{eq:BBd}, we obtain a different scheme. In that case, since the action is evaluated on the mean density (in time), the positivity constraint $\rho\ge0$ is then only enforced on $\frac{\rho_i+\rho_{i-1}}{2}$, rather than on each $\rho_i$ separately.} 
\end{remark}

The objects introduced until now define a finite dimensional model of optimal transport in the sense of Definition 2.5 in \cite{lavenant2019unconditional}. The framework developed therein can be used to deduce a convergence result for our scheme. 


\begin{theorem}\label{th:convergence}
Let $\rho_0,\rho_1 \in \mc{P}(D)$ be given and $\{\hatt{\mc{T}}_{h,\tau}\}_{h,\tau > 0}$ a family of tensor-product decomposition of $\Omega$ such that the time discretization is uniform, i.e.\ $t_{i} -t_{i-1} = \tau$ for all $i=1, \ldots, N+1$, and the space discretization $\mc{T}_h$ satisfies equation \eqref{eq:Cmesh}. Let $\sigma_{h,\tau}$ be a minimizer of problem \eqref{eq:BBd} associated with $\hatt{\mc{T}}_{h,\tau}$ and for $r=1$. Then, as $h,\tau \rightarrow 0$,  up to extraction of a subsequence, $\sigma_{h,\tau}$ converges weakly to $\sigma \in \mc{M}(\Omega)^{d+1}$ a minimizer of problem \eqref{eq:BB}.
\end{theorem}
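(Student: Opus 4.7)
The plan is to invoke the abstract convergence framework of \cite{lavenant2019unconditional}, which applies once the data $(Q_h, V_h, F_{h,\tau}, Z_{h,\tau}, \mc{C}_{h,\tau}, \mc{A}_{h,\tau})$ is shown to be a finite-dimensional model of optimal transport in the sense of Definition 2.5 therein, and then to verify the two consistency properties (liminf along weakly convergent discrete minimizers and limsup via a recovery sequence) that yield $\Gamma$-convergence of the discrete functionals. I would split the argument into three stages: structural properties, compactness plus a liminf bound, and construction of a recovery sequence.

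\textbf{Structural check and compactness.} The space $Q_h^+$ is a convex cone of nonnegative densities, $A_h$ is a proper convex, jointly $1$-homogeneous function that is non-increasing in $\rho$ and $2$-homogeneous in $m$, and $\mc{A}_{h,\tau}$ inherits the same properties in the space-time variables through formula \eqref{eq:spacetimeaction}. The constraint set $\mc{C}_{h,\tau}$ is non-empty since $\mathrm{div}\, Z_{h,\tau} = F_{h,\tau}$ and $\Pi_{Q_h}\rho_0,\Pi_{Q_h}\rho_1$ share the same mean; in particular the linear interpolation $\rho_i = (1 - t_i)\Pi_{Q_h}\rho_0 + t_i \Pi_{Q_h}\rho_1$ with a suitably chosen $m \in V_h$ is admissible and has action uniformly bounded in $h,\tau$. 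Testing any minimizer $\sigma_{h,\tau} = (\rho_{h,\tau}, m_{h,\tau})$ against this competitor provides a uniform action bound, which in turn yields uniform mass and $W^{-1,1}$-type control on $m_{h,\tau}$, hence, up to a subsequence, weak-$*$ convergence $\sigma_{h,\tau} \rightharpoonup \sigma$ in $\mc{M}(\Omega)^{d+1}$. The continuity equation with the prescribed boundary traces passes to the limit because $\Pi_{Q_h}\rho_j \rightharpoonup \rho_j$ weakly as measures for $j=0,1$ and $Z_{h,\tau}$ is $H(\mr{div})$-conforming, so the limit $\sigma$ lies in $\mc{C}$.

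\textbf{Liminf inequality.} Fix a smooth test pair $q \in C(\Omega; K)$. Using the commuting diagram \eqref{eq:commdiag} and the bounded projections onto $\hatt{X}^1_{h,\tau}$, one builds $q_{h,\tau} \in \hatt{X}^1_{h,\tau} \cap \mc{K}$ with $q_{h,\tau} \to q$ uniformly. Since $\sigma_{h,\tau} \rightharpoonup \sigma$, pairing gives
\[
\langle q, \sigma \rangle = \lim_{h,\tau \to 0} \langle q_{h,\tau}, \sigma_{h,\tau} \rangle \le \liminf_{h,\tau \to 0} \mc{A}_{h,\tau}(\sigma_{h,\tau}).
\]
Taking the supremum over $q$ yields $\mc{A}(\sigma) \le \liminf \mc{A}_{h,\tau}(\sigma_{h,\tau})$. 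Here the choice $r=1$ is essential, as it provides enough resolution to approximate continuous test functions uniformly while keeping the fibre-wise constraint $q_{h,\tau} \in K$ (piecewise constant test fields would give only a weaker bound).

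\textbf{Recovery sequence (limsup).} This is the main difficulty and the place where the machinery of \cite{arnold2006finite} and \cite{boffi2013mixed} collected in the appendix (the properties alluded to in Remark~\ref{rem:interpolators}) does the real work. Given any admissible $\sigma \in \mc{C}$, I would approximate it in two steps. First, regularize: extend $\sigma$ trivially outside $\Omega$ after a slight interior contraction in time and convolve with a smooth space-time mollifier at scale $\varepsilon$ to obtain $\sigma^\varepsilon \in H(\mr{div};\Omega) \cap C^\infty$ with $\mr{div}\, \sigma^\varepsilon = 0$ and $\mc{A}(\sigma^\varepsilon) \to \mc{A}(\sigma)$; this uses the Lusin-type argument of Proposition 5.18 of \cite{santambrogio2015optimal}. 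Then apply the bounded projection $\tilde{\Pi}_{Z_{h,\tau}}$, which by \eqref{eq:commdiag} preserves the divergence-free condition inside $\Omega$; a small $H(\mr{div})$-correction, provided by the inf-sup surjectivity of $\mr{div}: Z_{h,\tau} \to F_{h,\tau}$, adjusts the normal trace so it matches $\Pi_{Q_h}\rho_0$ and $\Pi_{Q_h}\rho_1$ on the temporal boundaries. Evaluating $\mc{A}_{h,\tau}$ on this candidate and using convexity of $A_h$ together with the uniform $L^p$ approximation properties of $\tilde{\Pi}_{Z_{h,\tau}}$ and $\tilde{\Pi}_{F_{h,\tau}}$ gives $\mc{A}_{h,\tau}(\sigma_{h,\tau}^\varepsilon) \to \mc{A}(\sigma^\varepsilon)$ as $h,\tau \to 0$, and a diagonal extraction in $\varepsilon$ closes the argument.

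\textbf{Main obstacle.} The delicate point is the simultaneous matching of the temporal boundary traces to the projected data $\Pi_{Q_h}\rho_j$, preservation of divergence-freeness inside $\Omega$, and preservation of non-negativity of the density after projection, all while controlling the action in the limit. The regularization must be chosen carefully (interior contraction combined with mollification) so that $\sigma^\varepsilon$ still has the right boundary marginals and $\rho^\varepsilon > 0$, and the finite element correction must be shown to be of vanishing $L^2$ norm. The assumption $r=1$ together with quasi-uniformity \eqref{eq:Cmesh} enters precisely to guarantee that this correction procedure is quantitatively stable and that the discrete action evaluated via piecewise-linear duals faithfully recovers the smooth Benamou–Brenier functional in the limit.
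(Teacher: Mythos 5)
Your overall plan (invoke the framework of \cite{lavenant2019unconditional}) matches the paper's starting point, but you then attempt to re-run the compactness/$\Gamma$-convergence argument directly, and two of your key steps fail precisely in the regime the theorem is about, namely arbitrary $\rho_0,\rho_1\in\mc{P}(D)$. First, the uniform action bound you get by testing the minimizer against the linear interpolation $\rho_i=(1-t_i)\Pi_{Q_h}\rho_0+t_i\Pi_{Q_h}\rho_1$ is false in general: if $\rho_0$ and $\rho_1$ are, say, two distant Dirac masses, the interpolated density vanishes outside two small elements, while any $m$ with $\mr{div}_x m=\Pi_{Q_h}\rho_0-\Pi_{Q_h}\rho_1$ must be nonzero on a region connecting them, so $B(\rho,m)=+\infty$ there and the competitor has infinite action. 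The uniform bound on the discrete minimum is exactly what requires the ``cheap connection of discretized Dirac masses'' estimate, i.e.\ point (4) of Proposition \ref{prop:properties} (the modulus $\omega$ in \eqref{eq:stabilitydiracs}), which the paper obtains by identifying $Q_h\times\mc{RT}_0$ data with finite-volume data and comparing the action with the scheme of \cite{gladbach2018scaling}, for which the estimate was proved in \cite{lavenant2019unconditional}. Your proposal never produces (or even identifies the need for) this ingredient.

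Second, your recovery sequence glosses over the genuinely hard points. Space-time mollification does not preserve the endpoint marginals, so the projected candidate does not satisfy the boundary condition \eqref{eq:boundaryd}; fixing the normal trace by an $H(\mr{div})$ correction of small $L^2$ norm is not enough, because the Benamou--Brenier action is not continuous under $L^2$ perturbations: the corrected density may vanish or become negative where the corrected momentum is nonzero, making the action infinite. Matching the traces $\Pi_{Q_h}\rho_0,\Pi_{Q_h}\rho_1$ to a regularized interior solution again requires gluing short connecting paths whose action is controlled, i.e.\ the same Dirac-mass estimate as above; this is how the abstract theorem of \cite{lavenant2019unconditional} handles it, and it is why the paper only needs to verify the conditions of its Definition 2.9 rather than rebuild the recovery sequence. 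Likewise, your claim that convexity plus $L^p$ approximation gives $\mc{A}_{h,\tau}(\tilde\Pi_{Z_{h,\tau}}\sigma^\varepsilon)\to\mc{A}(\sigma^\varepsilon)$ hides the quantitative consistency estimates that the paper actually proves: point (3) of Proposition \ref{prop:properties}, which needs $\rho^\varepsilon$ bounded below and $H^1$ regularity together with \eqref{eq:approx}, and point (2), whose proof rests on the identity $A_h(\rho,m)=\sup_b\langle m,b\rangle-\tfrac12\langle\rho,\mc{I}|b|^2\rangle$ and the interpolation bounds of Lemma \ref{lem:interp} (this is also where the restriction to piecewise linears, i.e.\ $r=1$ and lowest-order $V_h$, really enters). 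Your liminf step is essentially fine, but without the items above the proof does not go through for measure-valued data.
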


The proof is essentially an extension of the one presented in \cite{lavenant2019unconditional} and is postponed to the appendix. 

\begin{remark}[Stability]\label{rem:stability} The existence of bounded projections sastifying the commuting diagram \eqref{eq:commdiag} ensures stability of the projection onto $\mathcal{C}_{h,\tau}$ (see \eqref{eq:stabest}). Such commuting projections are also crucial to estabilish the convergence result in Theorem \ref{th:convergence}: in \cite{lavenant2019unconditional}, they are used to sample the continuous solution into a discrete one satisfying the continuity equation, therefore providing an admissible candidate for the discrete problem. Nonetheless, due to the nonlinear constraint  $q\in \mathcal{K}$, one cannot apply the standard linear theory in \cite{boffi2013mixed}, for example, so the commuting diagram condition does not imply directly a stability result analogous to \eqref{eq:stabest} for the saddle point problem \eqref{eq:BBd} (even if we see it as a discretization of the Hilbert space formulation in Proposition \ref{prop:hilbert}). \modif{Numerically (see Section \ref{sec:num}) the finite element pairs considered here $(Z_{h,\tau},X_{h,\tau}^r)$ appear to be stable when $r=1$, but strong oscillations may occur for $r=0$ and $V_h = \mathcal{BDM}_1$, providing empirical evidence of} the instablity of the discretization for this case.
\end{remark}

\begin{remark}\label{rm:convpeyre}
Suppose that $D = [0,1]^d$ and that $\hatt{\mc{T}}_{h,\tau}$ is a uniform quadrilateral discretization of $\Omega = [0,1]^{d+1}$. Then for $r=0$ \modif{and removing the constraint $\rho \geq 0$ (see Remark \ref{rem:pos})}, the discrete problem \eqref{eq:BBd} coincides with the discretization proposed in \cite{papadakis2014optimal}. Theorem \ref{th:convergence} shows that modifying this method with $r=1$ \modif{and adding the positivity constraint at all times}, one can prove convergence to the solution of the continuous problem \eqref{eq:BB}.
\end{remark}

%

\section{The proximal splitting algorithm}\label{sec:proximal}
\modif{We now describe in detail the discrete version of the proximal splitting algorithm introduced in Section \ref{sec:ham}, in the simplest setting where we remove the additional positivity constraint on the density, i.e.\ we solve
\[
\inf_{\substack{\sigma \in \mc{C}_{h,\tau}}} \mc{A}_{h,\tau}(\sigma) ,\quad \mc{A}_{h,\tau} (\sigma) \coloneqq \sup_{q \in (X_{h,\tau})^{d+1}\cap \mc{K}}  \langle q, \sigma\rangle\,.
\]
As mentioned in Remark \ref{rem:pos}, this amounts to enforcing positivity only on the mean density in time between consecutive time-steps. Using this formulation rather than \eqref{eq:BBd} we can reproduce the structure of the continuous version of the scheme, described in Section \ref{sec:ham}. Note, however, that one can actually solve problem \eqref{eq:BBd} with a similar strategy, e.g., by first reformulating the problem intrudicing a Lagrange multiplier to enforce the continuity equation, and then applying the same proximal splitting algorithm considered here but with the new variables and with an appropriate choice of norms.}

We start by defining
\[
\mc{K}_{h,\tau}^r \coloneqq \mc{K} \cap (\hatt{X}_{h,\tau}^r)^{d+1} \coloneqq \{ q \in  (\hatt{X}_{h,\tau}^r)^{d+1}\,; \, q \in K \, a.e.\}\,.
\]
We write the discrete problem as follows:
\begin{equation}\label{eq:BBdprox}
\inf_{\sigma \in L^2(\Omega;\mathbb{R}^{d+1}) } \sup_{q \in L^2(\Omega; \mathbb{R}^{d+1})} 
\langle q, \sigma\rangle + \iota_{\mc{C}_{h,\tau}} (\sigma) - \iota_{\mc{K}_{h,\tau}} (q)\,,
\end{equation}
where $\mc{C}_{h,\tau}$ is defined in \eqref{eq:Cht}. Then, the proximal splitting algorithm of section  \ref{sec:ham} applied to problem \eqref{eq:BBdprox} can be formulated as follows: given $\tau_1, \tau_2>0$ and an admissible $(\sigma^0, q^0)\in \mc{C}_{h,\tau} \times \mc{K}_{h,\tau}$, we define the sequence $\{(\sigma^k,q^k)\}_k$ by performing iteratively the following two steps:
\begin{subequations}\label{eq:PSd}
\begin{align}
\text{\textbf{Step 1}}: \qquad & \sigma^{k+1} = P_{\mc{C}_{h,\tau}} (\sigma^k- \tau_1  q^k)\,.\\
\text{\textbf{Step 2}}: \qquad &q^{k+1} = P_{\mc{K}_{h,\tau}}(q^k +\tau_2 (2 \sigma^{k+1}-\sigma^k))\,.
\end{align}
\end{subequations}
The convergence result in Theorem \ref{th:cp} clearly holds also in the discrete setting and gives convergence of the algorithm to a discrete saddle point $(\sigma_{h,\tau}, q_{h,\tau})$, if the condition $\tau_1 \tau_2 <1$ is satisfied. 
The two steps in the algorithm can be computed as follows.
\\

\paragraph{\textbf{Step 1}} As discussed in Section \ref{sec:proj}, the projection $P_{\mc{C}_{h,\tau}}$ can be computed modifying the system given by \eqref{eq:mixed2} by adding the Neumann boundary conditions associated with the function \eqref{eq:boundaryd}. 
\\

\paragraph{\textbf{Step 2}} Since $P_{\mc{K}_{h,\tau}}$ is an $L^2$ projection, we have that $P_{\mc{K}_{h,\tau}} = P_{\mc{K}_{h,\tau}} \circ P_{(\hatt{X}_{h,\tau}^r)^{d+1}}$, where  $P_{(\hatt{X}_{h,\tau}^r)^{d+1}}$ denotes the $L^2$ projection onto $(\hatt{X}_{h,\tau}^r)^{d+1}$. This means that we only need to be able to compute $P_{\mc{K}_{h,\tau}}$ when applied to an element of $\hatt{X}_{h,\tau}^r$. In addition, since $\hatt{X}_{h,\tau}^r$ is discontinuous across elements, we can compute the projection element by element,  and since functions in $\hatt{X}^r_{h,\tau}(S)$ are constant along the time direction, we can also eliminate the time variable in the projection. \modif{In other words, we only need to solve for each element $[t_0,t_1]\times S$ a problem in the form
\begin{equation}\label{eq:projectionS}
\xi_{\mc{K}} \coloneqq \mr{argmin} \{ \| \xi - q \|_{L^2(T)}^2  \,;\,  q \in ({X}_{h}^r(T))^{d+1}\,, ~ q(x) \in K ~\forall x\in T\}
\end{equation}
for a given $\xi \in (X_h^r(T))^{d+1}$.} 
We distinguish two cases:
\begin{enumerate}
\item if $r=0$, the projection \eqref{eq:projectionS} is just the projection of a vector $\xi \in \mathbb{R}^{d+1}$ onto the convex set $K$;
\item \modif{if $r=1$, any $\xi\in (X_h^1)^{d+1}$ is fully determined by its value on the vertices $\{v_i\}_i$ of $T$, and the condition $\xi\in \mathcal{K}$, is equivalent to $\xi(v_i) \in K$, by convexity of the set $K$ (see equation \eqref{eq:K}). However the problem is coupled in these variables when computing the projection in the $L^2$ norm. Here, we use instead a different projection and we simply set
\[
\xi_{\mc{K}}(v_i) = \mr{argmin} \{ | \xi(v_i) - q |^2  \,; q \in K\}\,.
\]
Note that this is a variational crime, but it can be avoided by reformulating the algorithm using as inner product on $X_h^1$ a weighted $\ell^2$ inner product on the degrees of freedom.}
\end{enumerate}
In both cases we only need to compute for each degree of freedom the projection of a given vector $(\bar{a},\bar{b}) \in \mathbb{R} \times \mathbb{R}^{d}$ onto $K$. If $(\bar{a},\bar{b})\notin K$, such a projection is given explicitly by the vector
\[
\left (-\frac{\mu^2}{2},\mu \,\frac{\bar{b}}{|\bar{b}|}\right)
\]
where $\mu \geq 0$ is the largest real root of the third order polynomial
\[
x \mapsto \frac{x^3}{2} + x (\bar{a}+1) - |\bar{b}|\,.
\]

\begin{remark}
As for the finite difference discretization studied in \cite{papadakis2014optimal}, different optimization techniques could be applied to solve problem \eqref{eq:BBd}. In particular, it should be noted that the ADMM approach orginally proposed by Benamou and Brenier \cite{benamou2000computational} could also be applied. This would lead to a very similar algorithm to \eqref{eq:PSd}, but it would require the introduction of an additional variable which avoids coupling of the degrees of freedom in the optimization step with respect to $q$. In other words, this is needed in order to be able to perform the projection on $K$ for each degree of freedom separately. More details on this issue can be found in \cite{papadakis2014optimal} for the discretization studied therein, and they hold also in the finite element setting.
\end{remark}

\section{Regularization} \label{sec:regularization}
The optimal transport problem does not involve any regularizing effect on the interpolation between two measures. 
In fact, one can even expect a loss of regularity in some cases, namely if one is interpolating between two smooth densities on a smooth but non-convex domain. Such a loss of regularity (which is often unphysical when the density represents a physical quantity) can be avoided introducing additional regularization terms in the formulation.
In this section we describe how to do so, and how these modifications translate at the algorithmic level. 

We consider the Hilbert space setting discribed in Section \ref{sec:ham} and we study problems in the form
\begin{equation}\label{eq:BBr}
\inf_{\sigma \in \mc{C}} \mc{A}(\sigma) + \alpha \mc{R}(\sigma)
\end{equation} 
where $\mc{R}:L^2(\Omega) \rightarrow \mathbb{R}$ is a convex, proper and l.s.c. functional, and $\alpha>0$. For this type of problem, we can still apply the proximal splitting algorithm \eqref{eq:PS} replacing the projection onto $\mc{C}$ by $\mr{prox}_{\tau_1 \mc F}$, the proximal operator of $\mc{F} \coloneqq \iota_{\mc C} + \alpha \mc{R}$, defined by
\[
\mr{prox}_{\tau_1 \mc{F}}(\xi) = \underset{\eta\in L^2(\Omega; \mathbb{R}^{d+1})}{\mr{argmin}} \frac{\|\xi- \eta\|^2}{2\tau_1} + \mc{F}(\eta)\,.
\]
This leads to the so-called PDGH algorithm, which for $\tau_1 \tau_2<1$ can be seen just as a proximal point method applied to a monotone operator \cite{chambolle2016introduction}, and therefore we still have convergence in the Hilbert space setting. As mentioned in \cite{lavenant2019unconditional} convergence of the discrete problem with mesh refinement is more delicate and will not be discussed here.

\subsection{Mixed $L^2$-Wasserstein distance}
Define for any $\sigma = (\rho,m) \in L^2(\Omega) \times L^2(\Omega;\mathbb{R}^d)$
\[
\mc{R}(\sigma) \coloneqq \left \{
\begin{array}{ll}
\frac{1}{2}\| \partial_t \rho \|^2_{L^2(\Omega)} & \text{if } \partial_t\rho \in L^2(\Omega) \,,\\
+\infty & \text{otherwise}\,.
\end{array}
\right. 
\]
With this functional, problem \eqref{eq:BBr} yields an interpolation between the Wasserstein distance and the $L^2$ distance. 
It was originally considered in \cite{benamou2001mixed}, where a conjugate gradient method was proposed to compute the minimizers.
Let $V \coloneqq H^1([0,1]; L^2(D)) \times L^2([0,1];H(\mr{div};D))$ and let $\accentset{\circ}{V}$ be the same space with homogenous boundary conditions on the fluxes. For any $\xi \in L^2(\Omega)^{d+1}$, $\sigma = \mr{prox}_{\tau_1 \mc{F}}(\xi)$ is obtained by solving the following system for $(\sigma,\phi) \in V \times L^2(\Omega)/\mathbb{R}$
\[
\left \{
\begin{array}{ll}
\langle \sigma,v \rangle + \alpha \tau_1 \langle \partial_t \rho, \partial_t v_t \rangle + \langle \phi,\mr{div}\, v \rangle = \langle \xi,v\rangle \,, & \forall v \in \accentset{\circ}{V}\,,\\
\langle \mr{div}\, \sigma, \psi \rangle = 0 \, , & \forall \psi \in L^2(\Omega)/\mathbb{R}\,, \\
\sigma \cdot n_{\partial \Omega} = \mc{X}\,,
\end{array}\right.
\]
where $v_t = v\cdot \hat{t}$ is the component of $v$ in the time direction. Well-posedness can be obtained by standard methods for saddle point problems \cite{boffi2013mixed} and it translates directly into well-posedness of the discrete system obtained by replacing $V$ with $Z_{h,\tau}$, $L^2(\Omega)$ with $F_{h,\tau}$, and  $\mc{X}$ with $ \mc{X}_{h,\tau}$. 

\subsection{$H^1$ regularization}
Define for any $\sigma = (\rho,m) \in L^2(\Omega) \times L^2(\Omega;\mathbb{R}^d)$
\begin{equation}\label{eq:regh1}
\mc{R}(\sigma) \coloneqq \left \{
\begin{array}{ll}
\frac{1}{2}\| \nabla_x \rho \|^2_{L^2(\Omega)}& \text{if } \rho \in  L^2([0,1];H^1(D))\,, \\
+\infty & \text{otherwise}\,.
\end{array}
\right. 
\end{equation}
In this case we set $V \coloneqq H(\mr{div};\Omega)$, $W \coloneqq L^2([0,1];H(\mr{div}_x;D))$ and let $\accentset{\circ}{V}$ and $\accentset{\circ}{W}$ be the same spaces with homogenous boundary conditions on the fluxes. Then, for any $\xi \in L^2(\Omega)^{d+1}$,  $\sigma = \mr{prox}_{\tau_1 \mc{F}}(\xi)$ is obtained by solving the following system for $(\sigma,\eta,\phi) \in V \times \accentset{\circ}{W} \times L^2(\Omega)/\mathbb{R}$
\[
\left \{
\begin{array}{ll}
\langle \sigma,v \rangle - \alpha \tau_1 \langle \mr{div}_x \eta, v_t \rangle + \langle \phi,\mr{div}\, v \rangle = \langle \xi,v\rangle \,, & \forall v \in \accentset{\circ}{V}\,,\\
\langle \rho, \mr{div}_x w \rangle + \langle \eta, w \rangle = 0 \,, & \forall w\in \accentset{\circ}{W}\,,\\
\langle \mr{div}\, \sigma, \psi \rangle = 0 \, , & \forall \psi \in L^2(\Omega)/\mathbb{R}\,, \\
\sigma \cdot n_{\partial \Omega} = \mc{X}\,,
\end{array}\right.
\]
where $v_t = v\cdot \hat{t}$ is the component of $v$ in the time direction. 
As before, well-posedness can be obtained by standard methods for saddle point problems \cite{boffi2013mixed}. 


We introduce the space $\hatt{W}_{h,\tau} \subset L^2([0,1];H(\mr{div}_x;D))$ whose shape functions on $S = [t_0,t_1] \otimes T$ are given by 
\[\hatt{W}_{h,\tau}(S) \coloneqq \mc{P}_1([t_0,t_1])\otimes V_h(T).
\] 
We denote by $\accentset{\circ}{\hatt{W}}_{h,\tau}$ the same space with the boundary conditions $\eta \cdot n_{\partial \Omega} = 0$ on $[0,1] \times \partial D$. 
Denote by $\nabla_x^h : L^2(\Omega) \rightarrow \accentset{\circ}{\hatt{W}}_{h,\tau}$ the adjoint of $-\mr{div}_x$  defined by
\[
\langle \nabla_x^h \phi, \eta \rangle = -\langle \phi, \mr{div}_x \eta \rangle \,, \quad \forall \, (\phi, \eta) \in L^2(\Omega)\times \accentset{\circ}{\hatt{W}}_{h,\tau}\,.
\]

We define a discrete version of \eqref{eq:regh1} as follows:
\[
\mc{R}_{h,\tau} (\sigma) \coloneqq  \frac{1}{2} \| \nabla_x^h \rho\|^2_{L^2(\Omega)}.
\]
Let $\mc{F}_{h,\tau} \coloneqq \iota_{\mc{C}_{h,\tau}} + \alpha \mc{R}_{h,\tau}$.
Then for any $\xi \in L^2(\Omega)^{d+1}$,  $\sigma = \mr{prox}_{\tau_1 \mc{F}_{h,\tau}}(\xi)$ is obtained by solving the following system for $(\sigma,\eta,\phi) \in \accentset{\circ}{V}_{h,\tau} \times \accentset{\circ}{W}_{h,\tau} \times F_{h,\tau}/\mathbb{R}$:
\[
\left \{
\begin{array}{ll}
\langle \sigma,v \rangle - \alpha \tau_1\langle \mr{div}_x \eta, v_t\rangle + \langle \phi,\mr{div}\, v \rangle = \langle \xi,v\rangle \,, & \forall v \in \accentset{\circ}{V}_{h,\tau}\,,\\
\langle \rho, \mr{div}_x w \rangle + \langle \eta, w \rangle =0 \,, & \forall w \in  \accentset{\circ}{W}_{h,\tau}\,,\\
\langle \mr{div}\, \sigma, \psi \rangle = 0 \, , & \forall \psi \in F_{h,\tau}/\mathbb{R}\,, \\
\sigma \cdot n_{\partial \Omega} = \mc{X}_{h,\tau}\,.
\end{array}\right.
\]

\section{Numerical results}\label{sec:num}
In this section we describe two numerical tests that demonstrate the behaviour of the proposed discretization both qualitatively and in terms of convergence of the algorithm. For both tests the time discretization is uniform, but we will use different meshes and finite element spaces for the discretization in space. \modif{For all tests,  we set $\tau_1 = \tau_2 = 1$ as parameters of the proximal splitting algorithm \eqref{eq:PSd}}. The results shown hereafter have been obtained using the finite element software Firedrake \cite{Rathgeber2016} (see \cite{McRae2016,Bercea2016}, for the tensor product constructions) and the linear solver for the mixed Poisson equation is based on PETSc \cite{petsc-user-ref,petsc-efficient}. The code to perform the tests in this section can be found in the repository 
\url{https://github.com/andnatale/dynamic-ot.git}.

\subsection{Qualitative behaviour and convergence of the proximal-splitting algorithm}
\modif{We set $D=[0,1]^2$, and consider either a structured triangular mesh, an unstructured one, or a uniform Cartesian mesh (shown in figures \ref{fig:tristrcomp}, \ref{fig:triunstrcomp} and \ref{fig:quadcomp}), and $\tau \coloneqq |t_{i+1} - t_i| = 1/20$. The initial and final densities are given by
\begin{equation}\label{eq:bcscos}
\rho_0(x) \propto \frac{3}{2} + \cos(2\pi |x-x_0|) 
, \quad \rho_1(x) \propto  \frac{3}{2} - \cos(2\pi |x-x_0|)  \,,
\end{equation}
where $x_0 = (0.5,0.5)$, and they are normalized so that the total mass is equal to one. In figures  \ref{fig:tristrcomp}, \ref{fig:triunstrcomp} and \ref{fig:quadcomp}, the interpolation at time $t=0.5$ is shown for different choices of spaces $V_h$ and $X_h$ and different meshes. The discretization corresponding to the couple $V_h = \mc{BDM}_1$ and $X_h^0$ yields a very oscillatory solution both on the structured and unstructured mesh. Oscillations appear also for $V_h = \mc{RT}_0$, although the qualitative features of the solution are well captured. For this latter case, the oscillations seem to be very sensitive to the structure of the mesh and are attenuated when choosing $X_h^1$ instead of $X_h^0$. On the Cartesian mesh the scheme does not generate any oscillations, with the choice of the space $X_h^1$ leading to slightly more diffusive results. Note that the appearance of oscillations is not related to the positivity constraint since in the case considered here the interpolation is strictly positive. On the other hand, we remark that for tests leading to pure translation of compactly supported densities (not shown) the oscillations disappear almost entirely even for the couple $V_h = \mc{BDM}_1$, $X_h^0$.  
}
\modif{
In figure \ref{fig:Gaussianconv}, the different schemes are compared in terms of convergence of the proximal splitting algorithm. For each mesh and combination of spaces, we compute a reference solution $\sigma^*$ corresponding to $10^4$ iterations of the algorithm and we estimate the error at the $n$th iteration by $\|\sigma^n-\sigma^*\|_{L^2(\Omega)}$. The cases corresponding to $X_h^1$ appear to converge significantly faster than those corresponding to $X_h^0$. Note that in the case $V_h=\mc{RT}_{[0]}$, $X_h^0$, the resulting discretization as well as the optimization algorithm coincide with the ones proposed in \cite{papadakis2014optimal}, since here we consider a uniform Cartesian grid. Also in this case, replacing $X_h^0$ with $X_h^1$ (besides providing a convergence guarantee, see Remark \ref{rm:convpeyre}) yields a considerable speedup of the algorithm.
}

\begin{figure}[htbp]
\captionsetup[subfigure]{labelformat=empty}
    
     \begin{minipage}{0.29\linewidth}
          \subfloat{\includegraphics[scale=.2, trim =  250 50 250 50,clip]{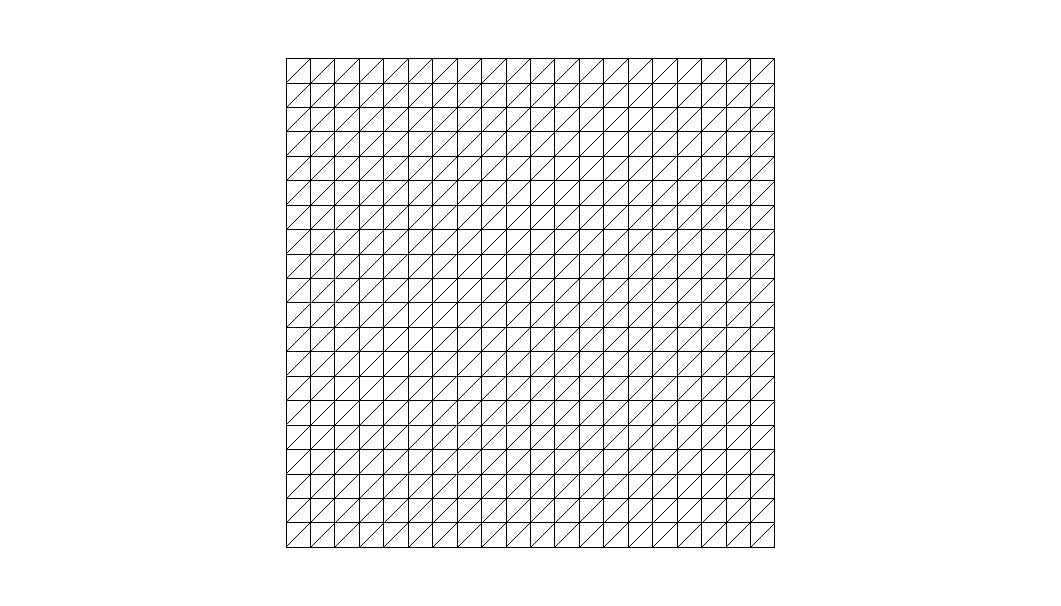}} \hfill
     \end{minipage} %
    \begin{minipage}{0.58\linewidth}
     \subfloat[{$\mc{RT}_0, X_h^0$}]{\includegraphics[scale=.2, trim =  250 50 250 50,clip]{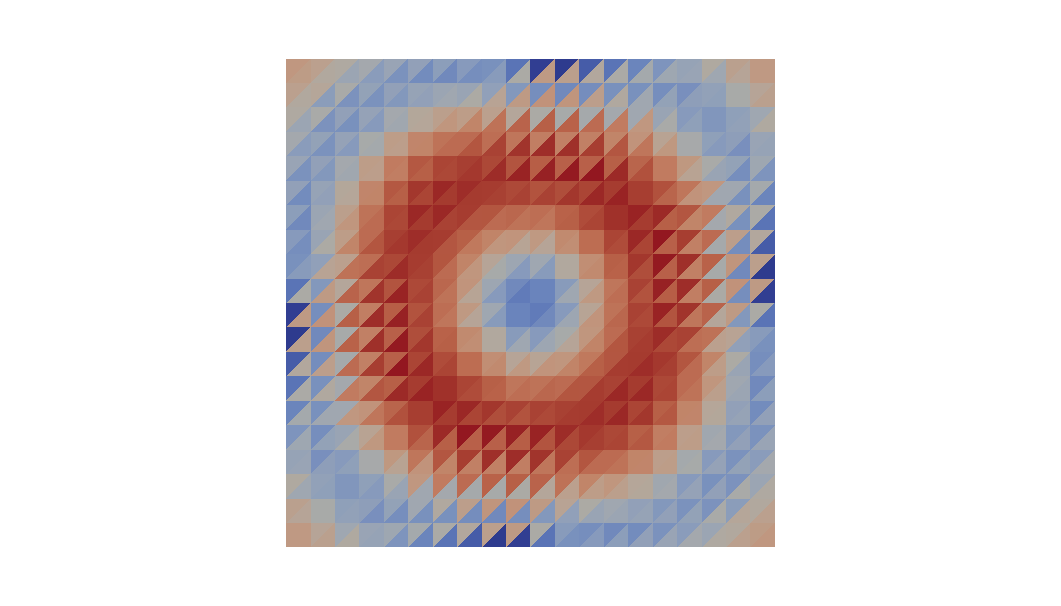}}
     \subfloat[{$\mc{RT}_{0}, X_h^1$}]{\includegraphics[scale=.2, trim = 250 50 250 50,clip]{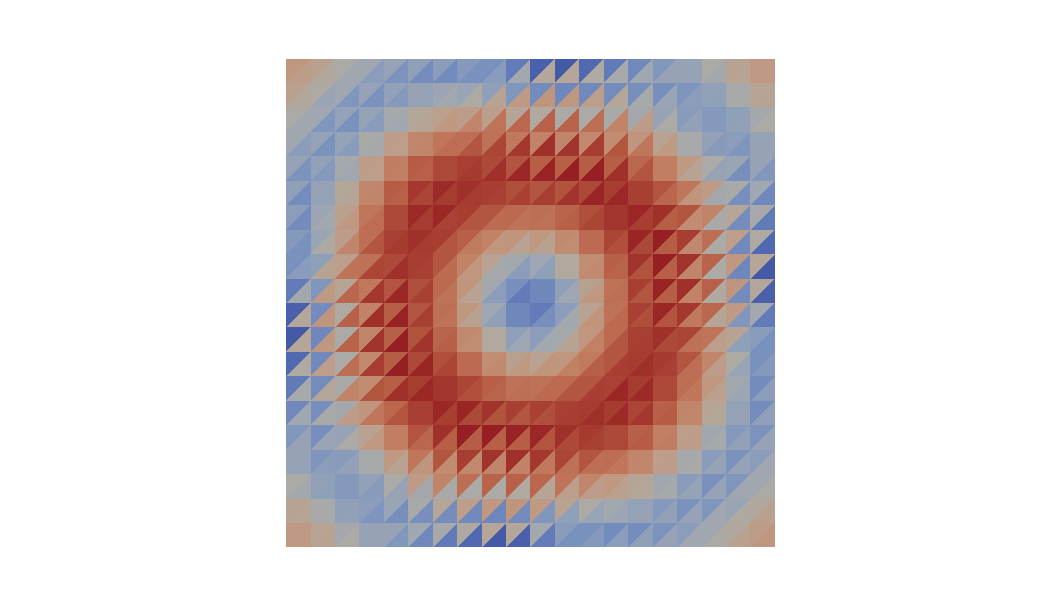}}\\
     \subfloat[{$\mc{BDM}_1, X_h^0$}]{\includegraphics[scale=.2,trim =  250 50 250 50,clip]{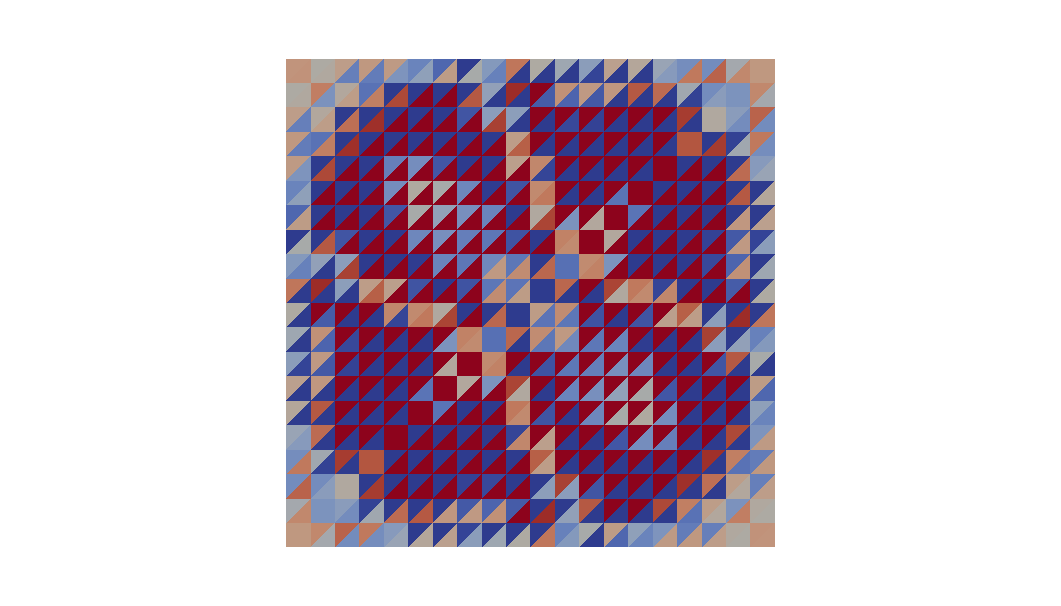}}
     \subfloat[{$\mc{BDM}_1, X_h^1$}]{\includegraphics[scale=.2, trim =  250 50 250 50,clip]{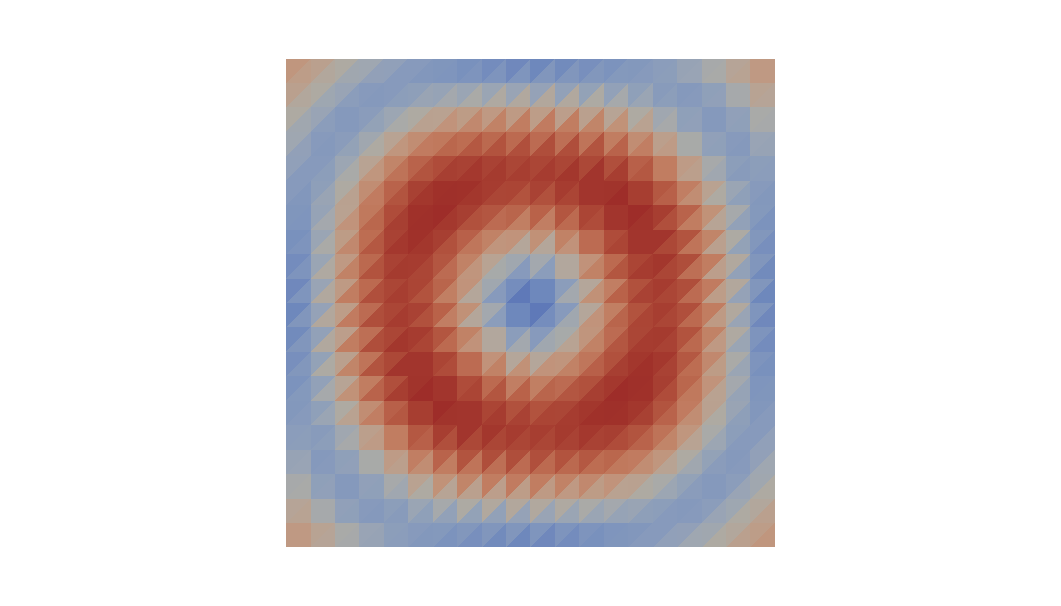}}
    \end{minipage}%
    \begin{minipage}{0.1\linewidth}\hspace{-1em}
    \subfloat{\includegraphics[scale=.27, trim = 760 90 160 120,clip]{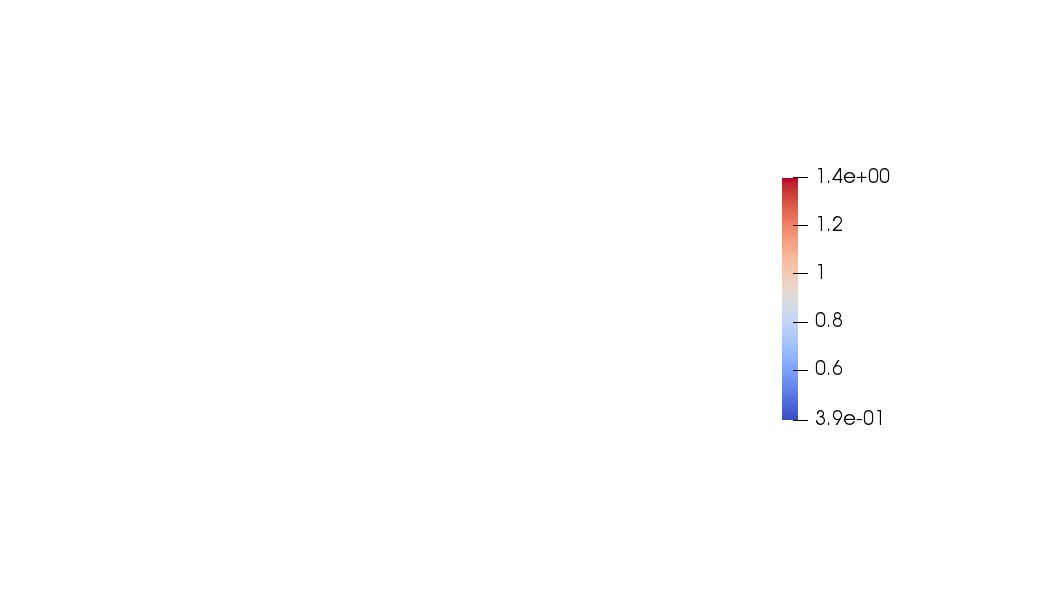}}
    \end{minipage}
     \caption{Comparison between optimal transport interpolations of the densities in \eqref{eq:bcscos} for different spaces on a structured triangular mesh. Note that in the case $V_h = \mc{BDM}_1$, $X_h^0$, the data exceeds the color map range.}
     \label{fig:tristrcomp}
\end{figure}

%
%
%
%
%
\begin{figure}[htbp]
\captionsetup[subfigure]{labelformat=empty}
     \begin{minipage}{0.29\linewidth}
          \subfloat{\includegraphics[scale=.2, trim =  250 50 250 50,clip]{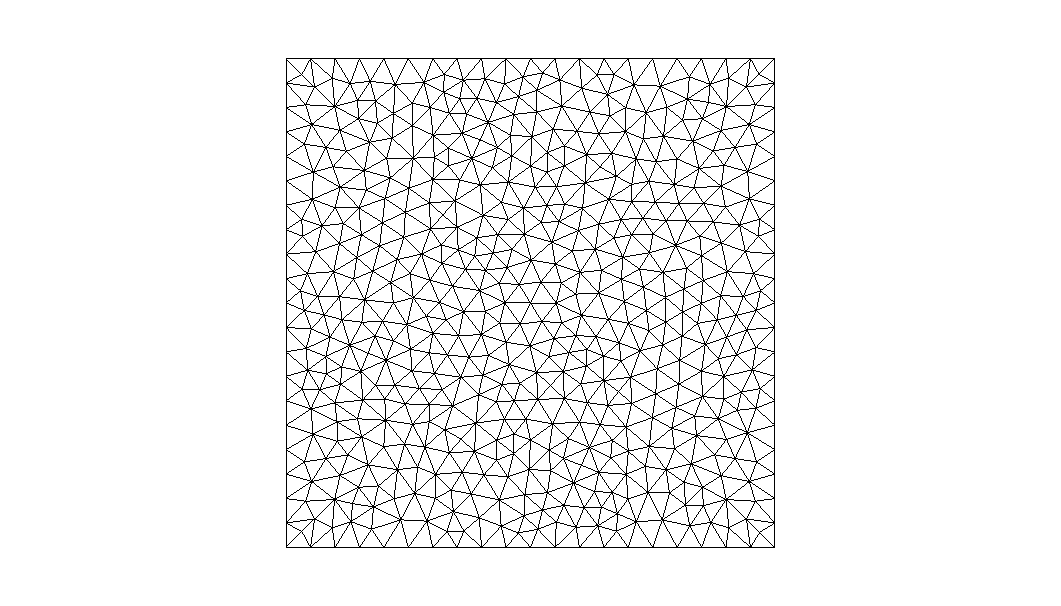}} \hfill
     \end{minipage} %
    \begin{minipage}{0.58\linewidth}
     \subfloat[{$\mc{RT}_0, X_h^0$}]{\includegraphics[scale=.2, trim =  250 50 250 50,clip]{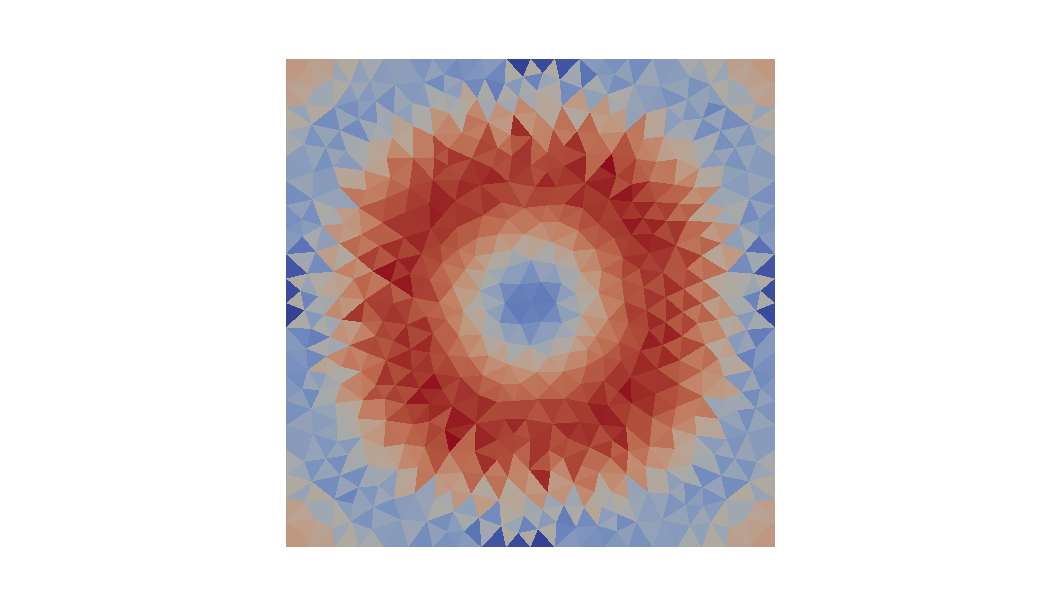}}
     \subfloat[{$\mc{RT}_{0}, X_h^1$}]{\includegraphics[scale=.2, trim =  250 50 250 50,clip]{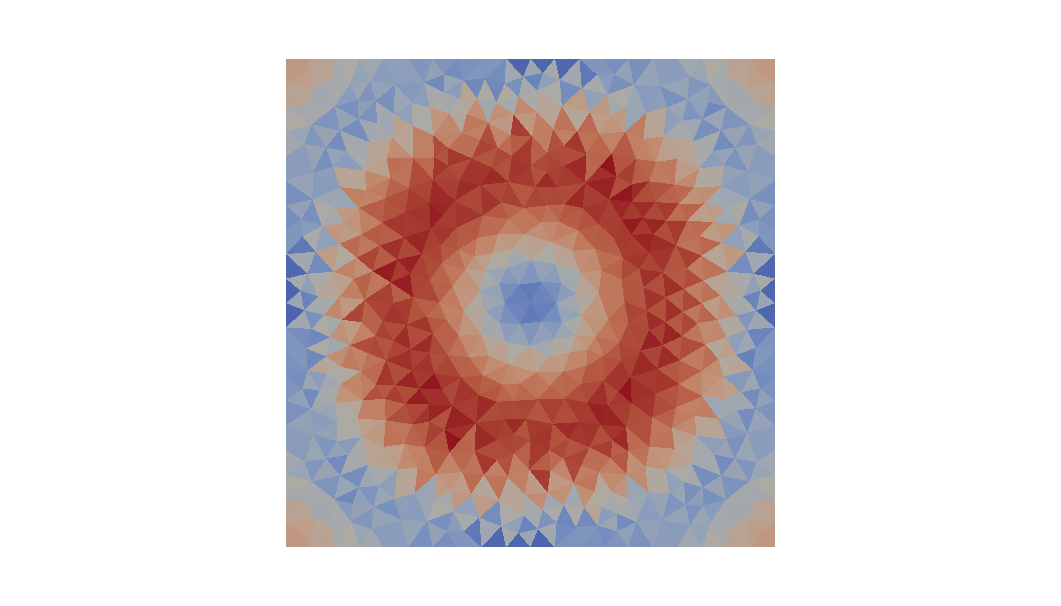}}\\
     \subfloat[{$\mc{BDM}_1, X_h^0$}]{\includegraphics[scale=.2, trim =  250 50 250 50,clip]{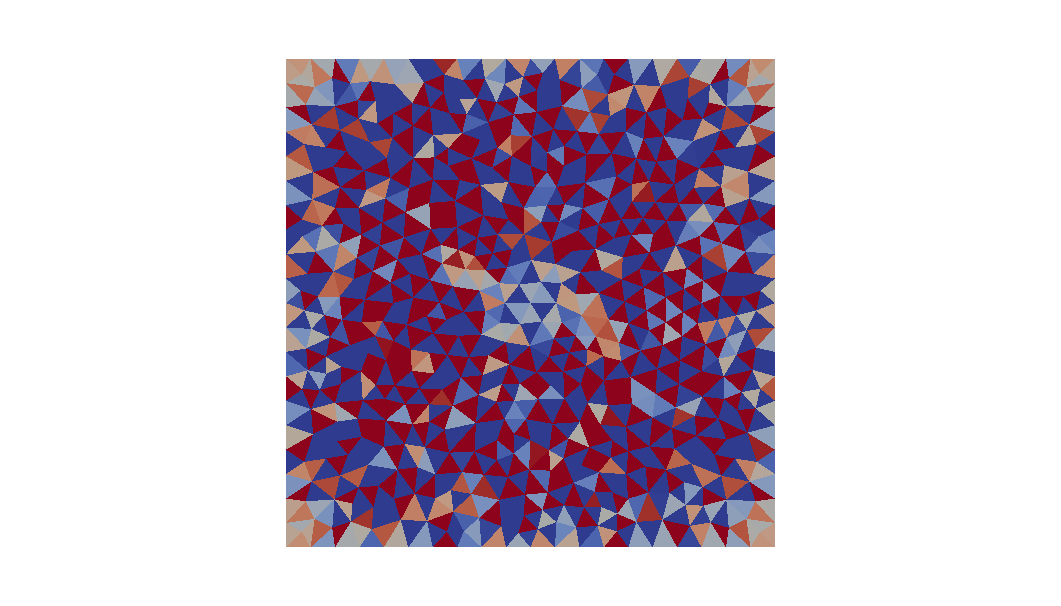}}
     \subfloat[{$\mc{BDM}_1, X_h^1$}]{\includegraphics[scale=.2, trim =  250 50 250 50,clip]{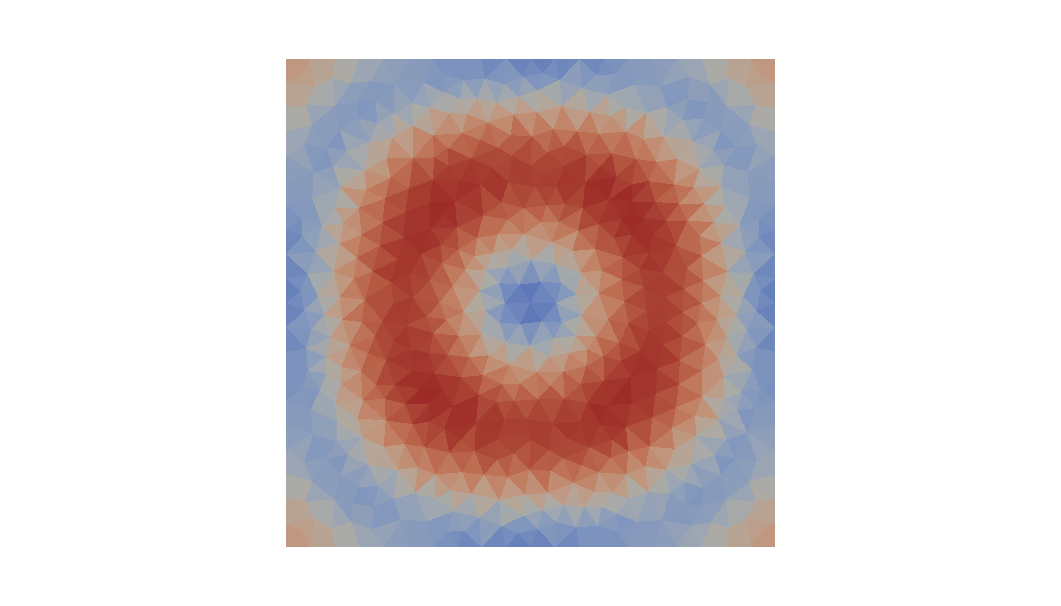}}
    \end{minipage}%
    \begin{minipage}{0.1\linewidth}\hspace{-1em}
    \subfloat{\includegraphics[scale=.27, trim = 760 90 160 120,clip]{Final_scale}}
    \end{minipage}
     \caption{Comparison between optimal transport interpolations of the densities in \eqref{eq:bcscos} for different spaces on an unstructured triangular mesh. Note that in the case $V_h = \mc{BDM}_1$, $X_h^0$, the data exceeds the color map range.}
     \label{fig:triunstrcomp}
\end{figure}

\begin{figure}[htbp]
\captionsetup[subfigure]{labelformat=empty}
     \begin{minipage}{0.28\linewidth}
          \subfloat{\includegraphics[scale=.2, trim =  250 30 250 50,clip]{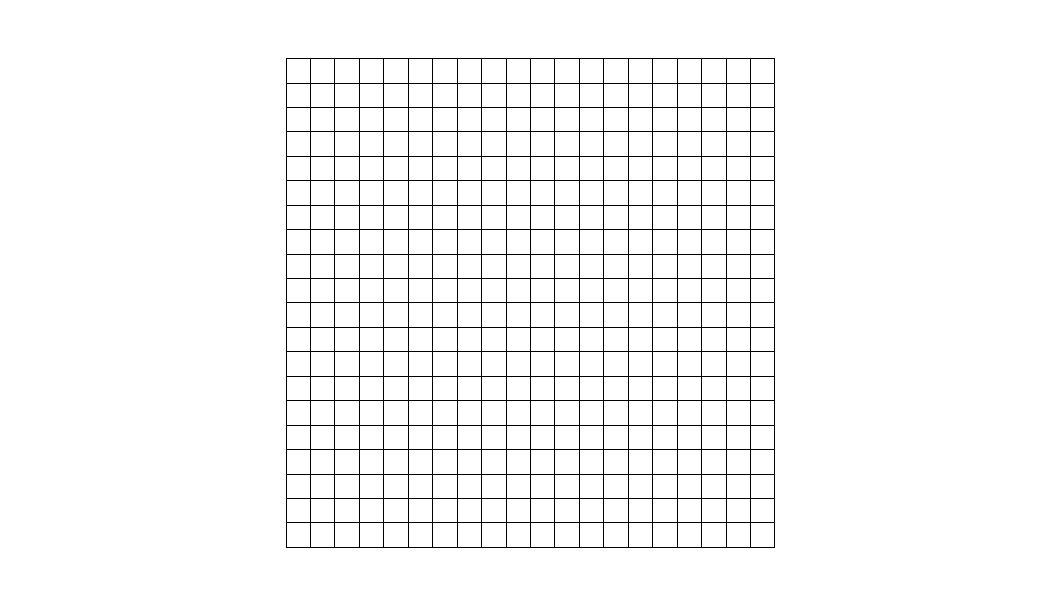}} \hfill
     \end{minipage} %
    \begin{minipage}{0.59\linewidth}
     \subfloat[\scriptsize{$\mc{RT}_{[0]}, X_h^0$}]{\includegraphics[scale=.2, trim =  250 50 250 50,clip]{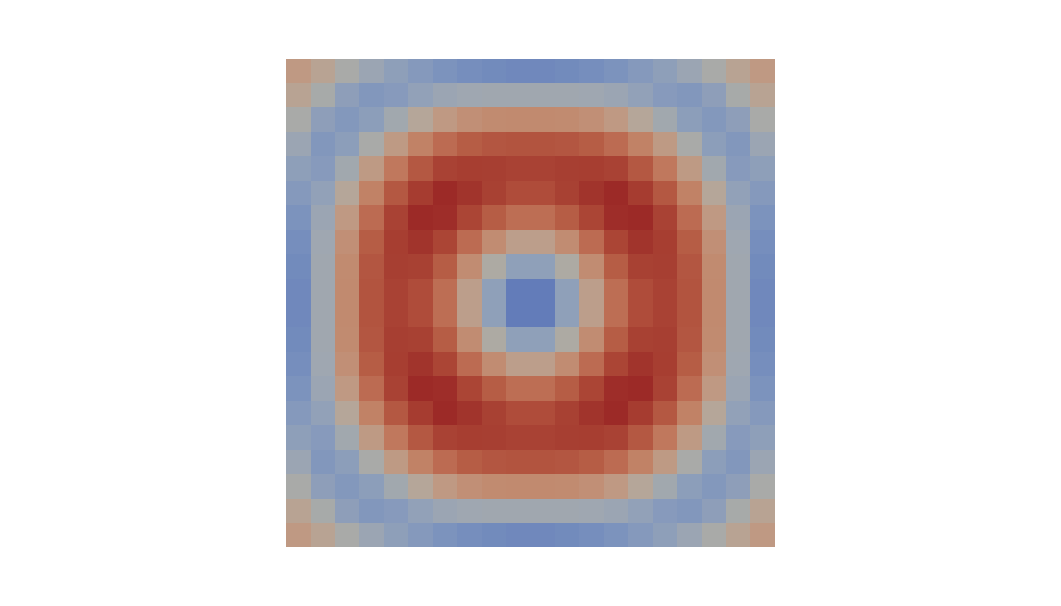}}
     \subfloat[\scriptsize{$\mc{RT}_{[0]}, X_h^1$}]{\includegraphics[scale=.2, trim =  250 50 250 50,clip]{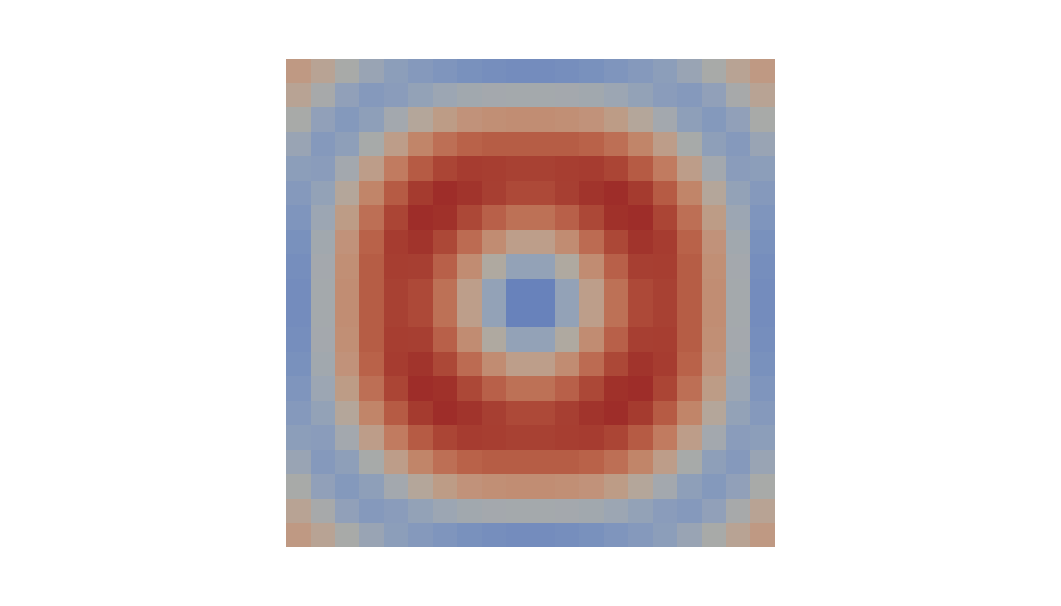}}
    \end{minipage}%
    \begin{minipage}{0.1\linewidth}\hspace{-1em}
    \subfloat{\includegraphics[scale=.27, trim = 760 90 160 120,clip]{Final_scale}}
    \end{minipage}
     \caption{Comparison between optimal transport interpolations of the densities in \eqref{eq:bcscos} for different spaces on an uniform Cartesian mesh. }
     \label{fig:quadcomp}
\end{figure}


\begin{figure}[h]
\captionsetup[subfigure]{labelformat=empty}
     \centering
     \subfloat[$\mc{RT}_0$(str.)]{\includegraphics[scale=.65,trim = 0 0 5 0,clip]{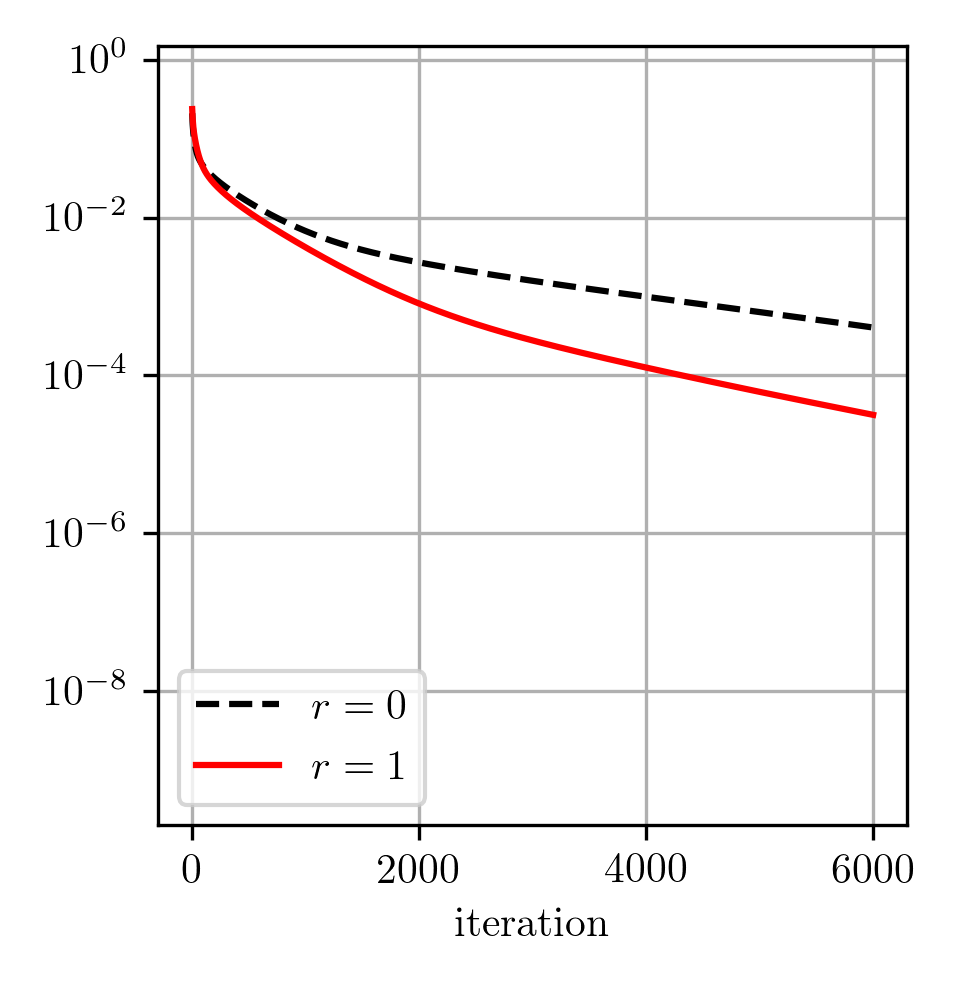}}
     \subfloat[$ \mc{BDM}_1$(str.)]{\includegraphics[scale=.65,trim = 32 0 5 0 ,clip]{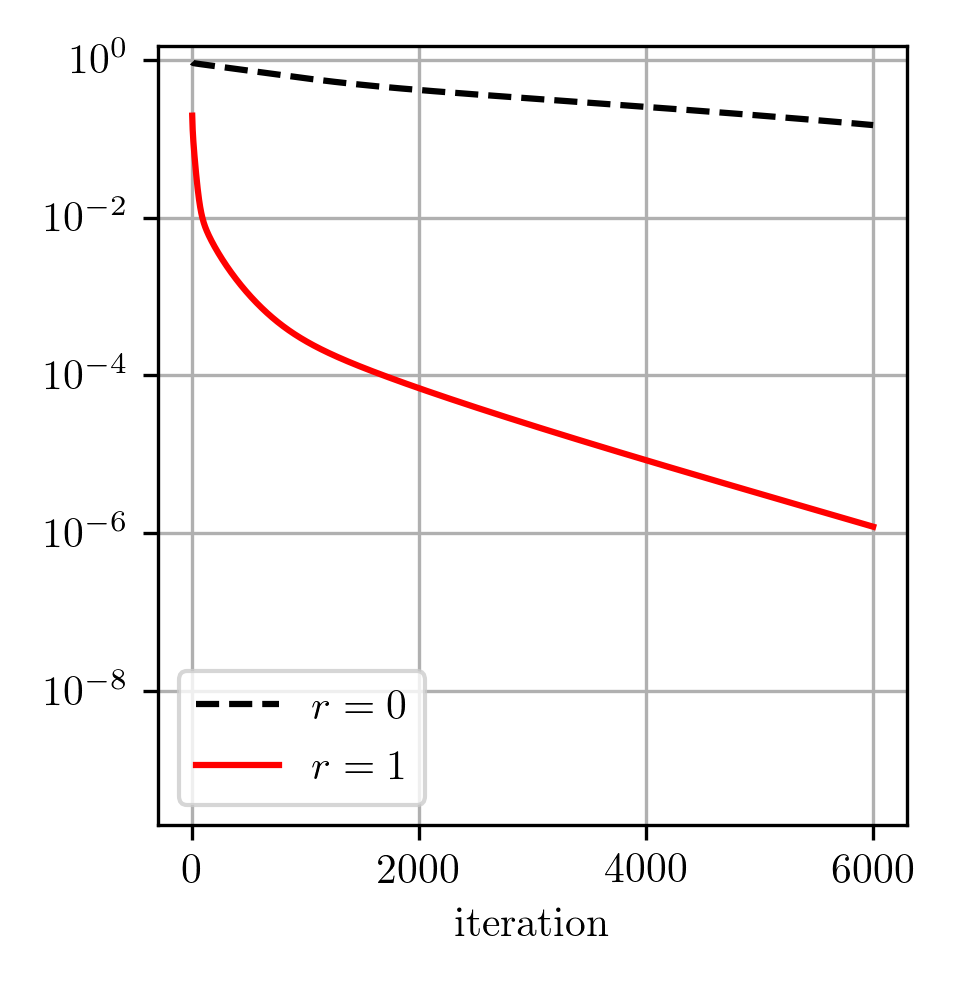}}
     \subfloat[$\mc{RT}_{[0]}$]{\includegraphics[scale=.65,trim = 32 0 5 0,clip]{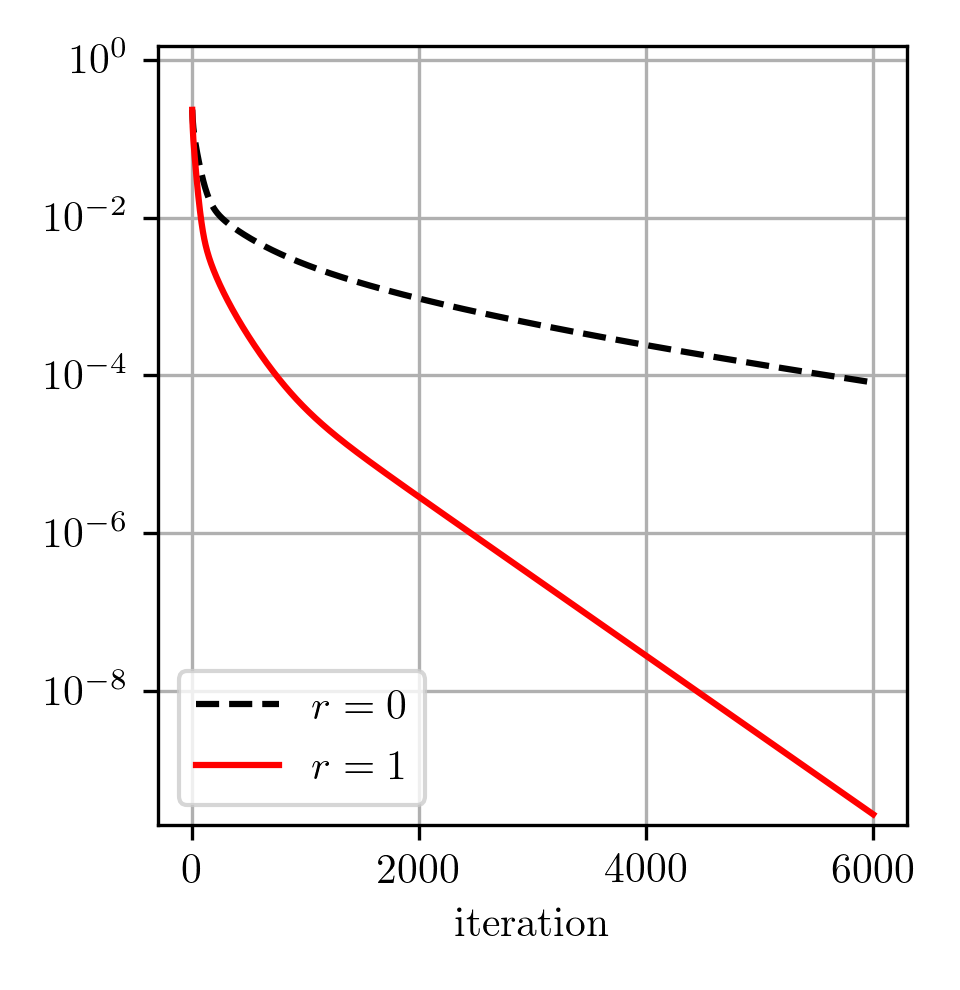}}\\
    \subfloat[$\mc{RT}_0$(unstr.)]{\includegraphics[scale=.65,trim = 0 0 5 0,clip]{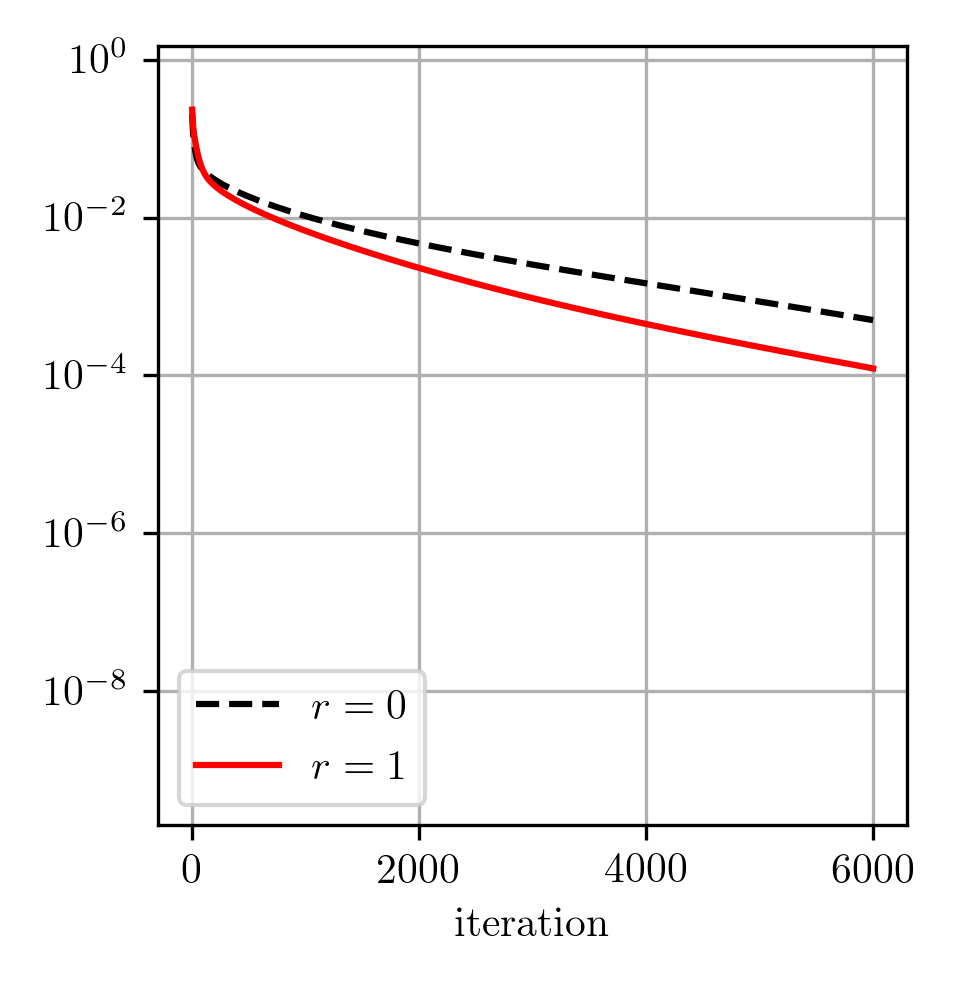}}
     \subfloat[$\mc{BDM}_1$(unstr.)]{\includegraphics[scale=.65,trim = 32 0 5 0 ,clip]{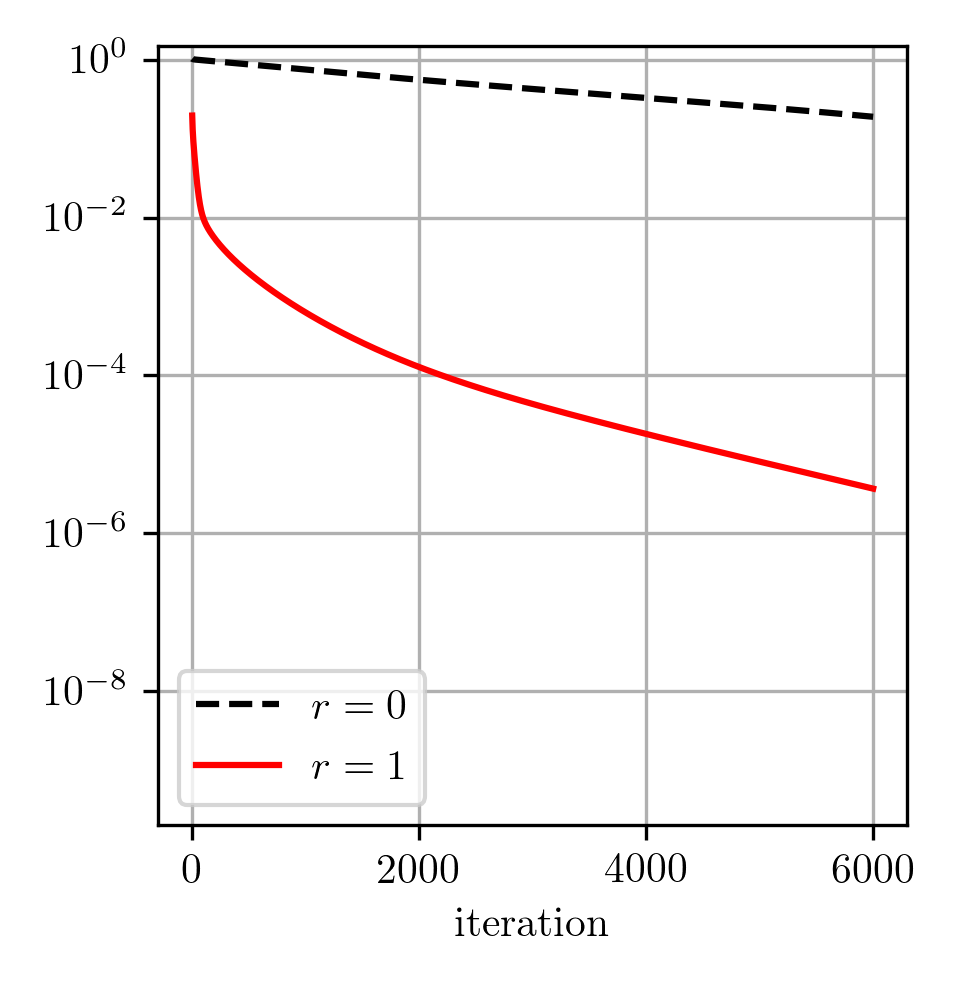}}
     \caption{Convergence of the proximal splitting algorithm measured by $\|\sigma_{n}-\sigma^*\|_{L^2(\Omega)}$  for different spaces $V_h$ and $X_h^r$ on the structured (str.) and unstructured (unstr.) triangular mesh, and on the Cartesian mesh.}
     \label{fig:Gaussianconv}
\end{figure}

\subsection{Non-convex domain}
We now consider a non-convex polygonal domain $D$, with the spatial mesh $\mc{T}_h$ represented in figure \ref{fig:Zmesh} and $\tau \coloneqq |t_{i+1} - t_i| = 1/30$. Note that even if the case of a non-convex domain is beyond the domain of applicability of the convergence results presented in this paper, our scheme is still well-defined for this case. The boundary conditions are given by 
\[
\rho_0(x) = \exp \left(-\frac{|x-x_0|^2}{2 s^2}\right) 
, \quad \rho_1(x) = \exp \left(-\frac{|x-x_1|^2}{2 s^2} \right) \,,
\]
with $s=0.1$, $x_0 = (0.5,0.1)$ and $x_1 = (0.5, 0.9)$. Such boundary conditions are illustrated in figure \ref{fig:Zmesh}. In this case, the exact density interpolation is not absolutely continuous, since mass concentrates on the segment connecting the two non-convex corners of the domain. Note that we have therefore refined the mesh along the diagonal where we expect the mass to concentrate.

In figure \ref{fig:Zevol}, \ref{fig:Zevolreg} and \ref{fig:ZevolregL} we show the density evolution up to time $t=0.5$ (the other half of the time evolution being symmetric in space given the boundary conditions and the domain shape) for the non-regularized case, the $H^1$ regularization and the $L^2$ regularization, respectively.  For both regularizations the density profile appears to be smoothened, but only the $H^1$ regularization avoids concentration at the corners.

The proximal operator of the projection on the continuity equation is more expensive computationally for the $H^1$ regularization than for the other two cases, since we have to solve a larger mixed system at each iteration. However, for both regularizations, the proximal splitting algorithm itself converges much faster than the non-regularized case, as it can be seen in figure \ref{fig:Zconv}.


\begin{figure}[h]
\captionsetup[subfigure]{labelformat=empty}
     \centering
     \subfloat[]{\includegraphics[scale=.29,trim = 350 110 312 120,clip]{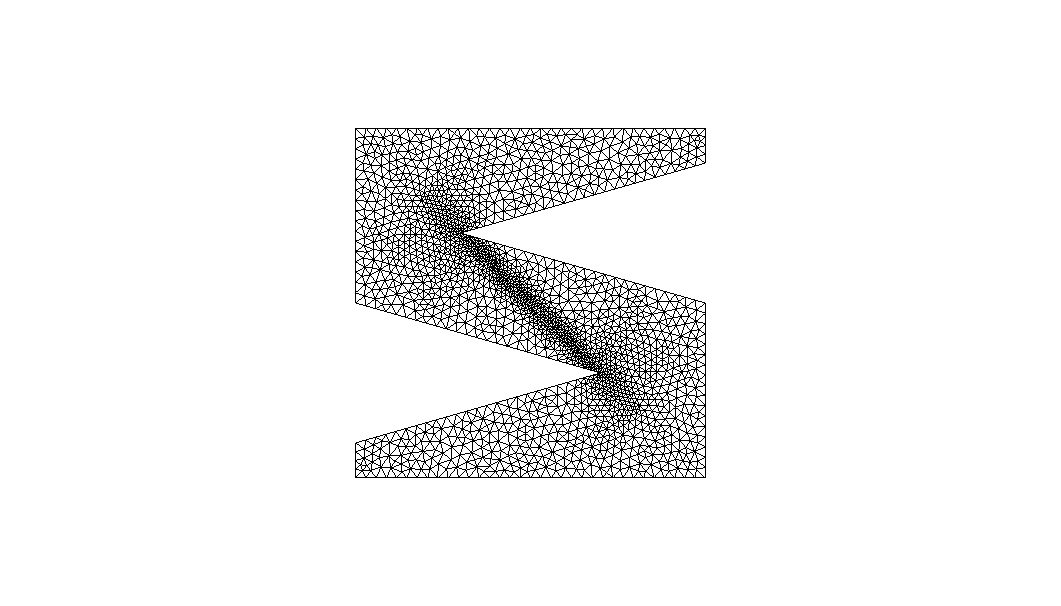}}
     \subfloat[$t = 0$]{\includegraphics[scale=.245,trim = 345 110 303 120,clip]{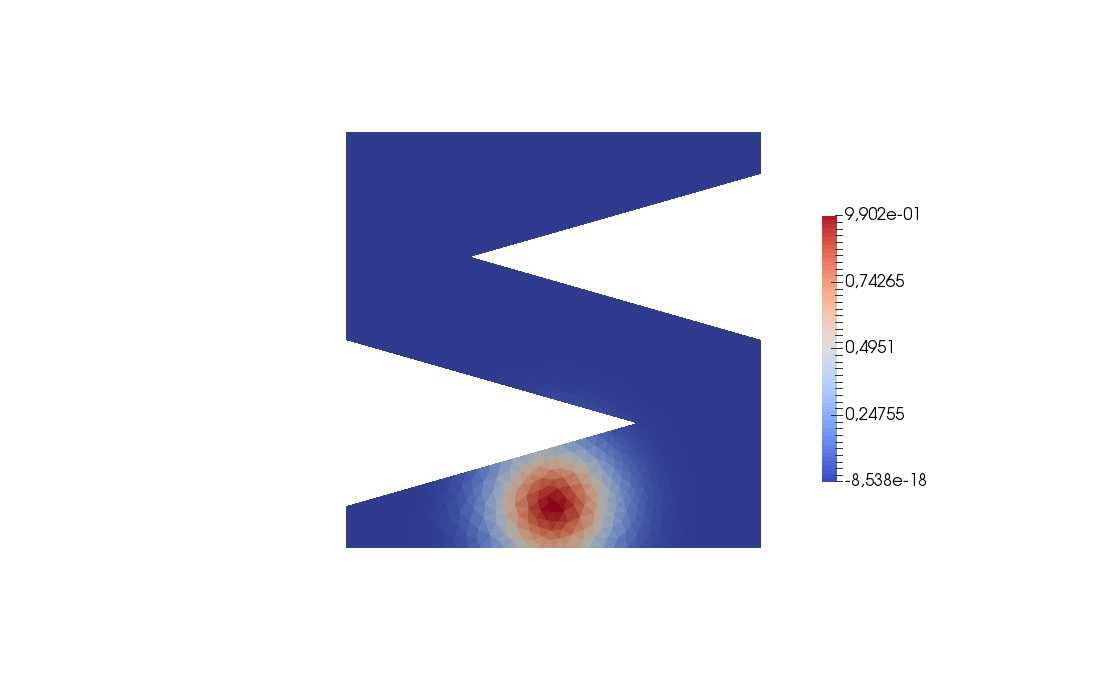}}
     \subfloat[$t = 1$]{\includegraphics[scale=.26,trim =  353 123 353 120,clip]{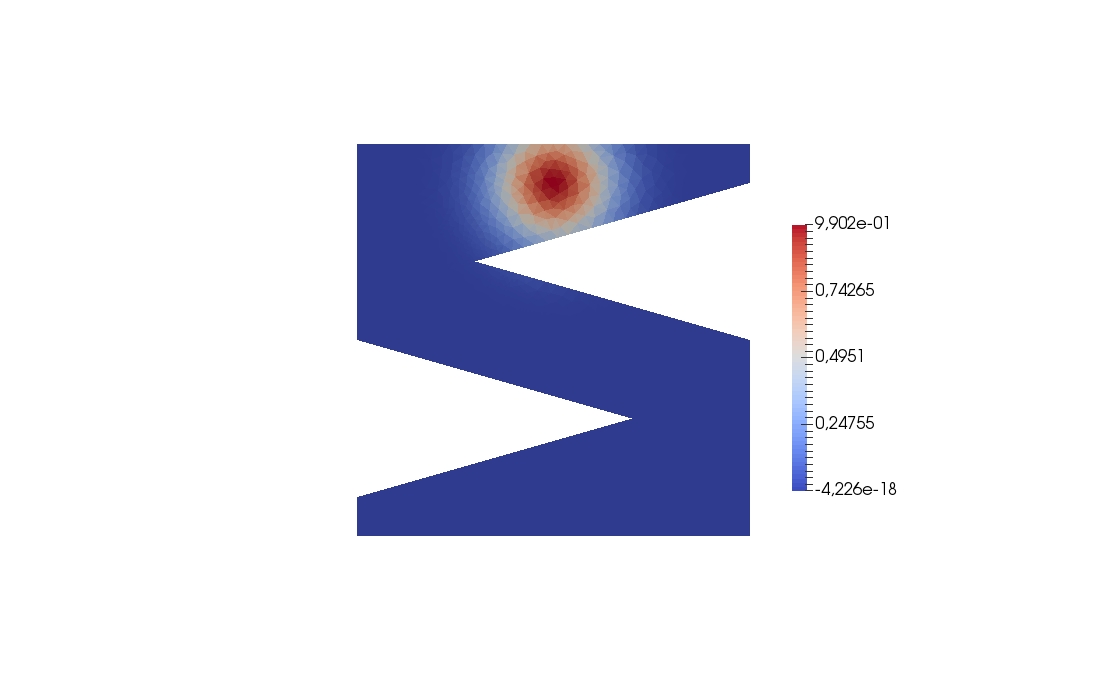}}     
     \subfloat[]{\includegraphics[scale=.3,trim =  760 130 200 120,clip]{Z10reg.jpg}}
          \caption{Mesh, and initial and final density for the non-convex domain test.}
     \label{fig:Zmesh}
\end{figure}

\begin{figure}[h]
\captionsetup[subfigure]{labelformat=empty}
     \centering
     \subfloat[$t = 0$]{\includegraphics[scale=.25,trim = 350 110 300 120,clip]{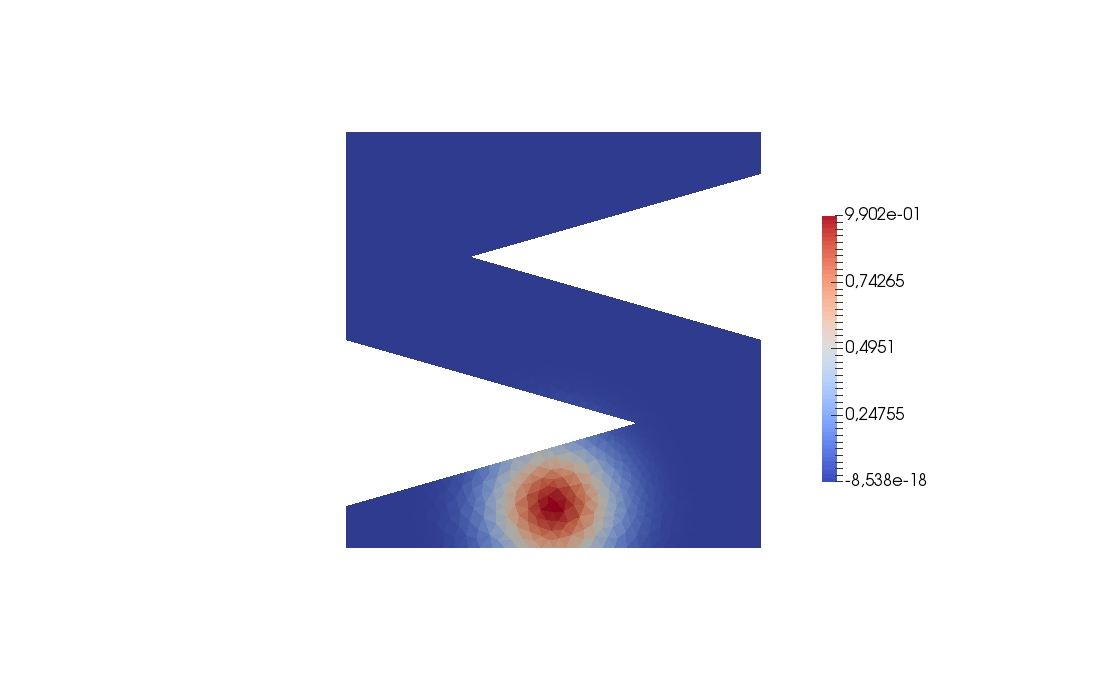}}
     \subfloat[$t = 0.1$]{\includegraphics[scale=.25,trim =  350 110 300 120,clip]{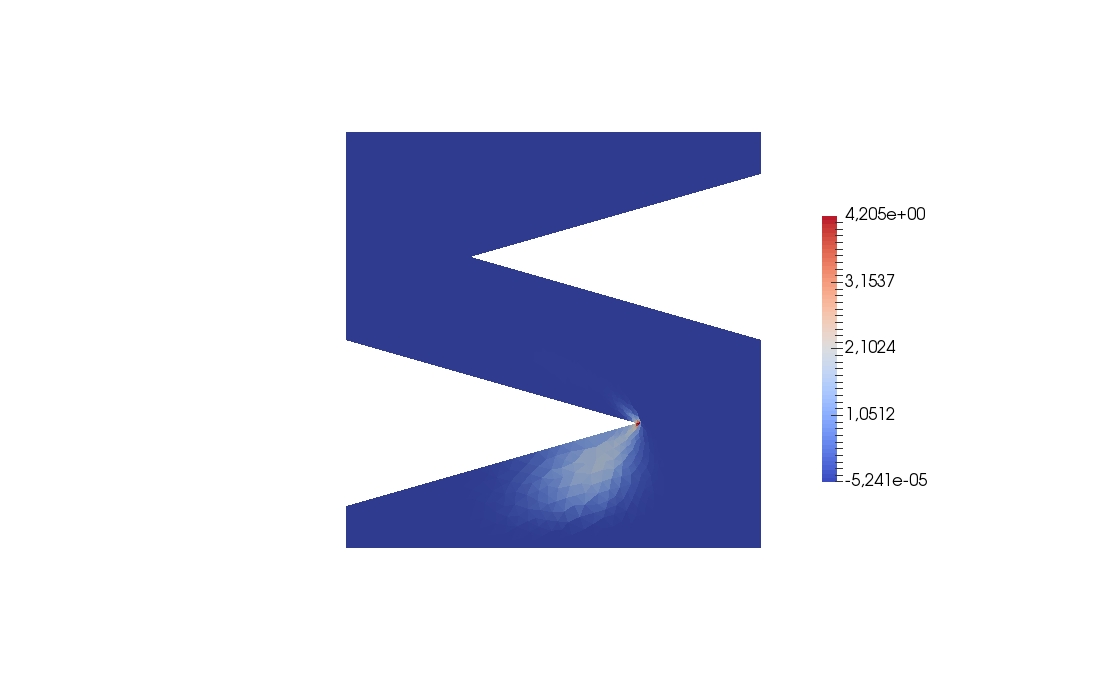}}
     \subfloat[$t = 0.2$]{\includegraphics[scale=.25,trim =  350 110 300 120,clip]{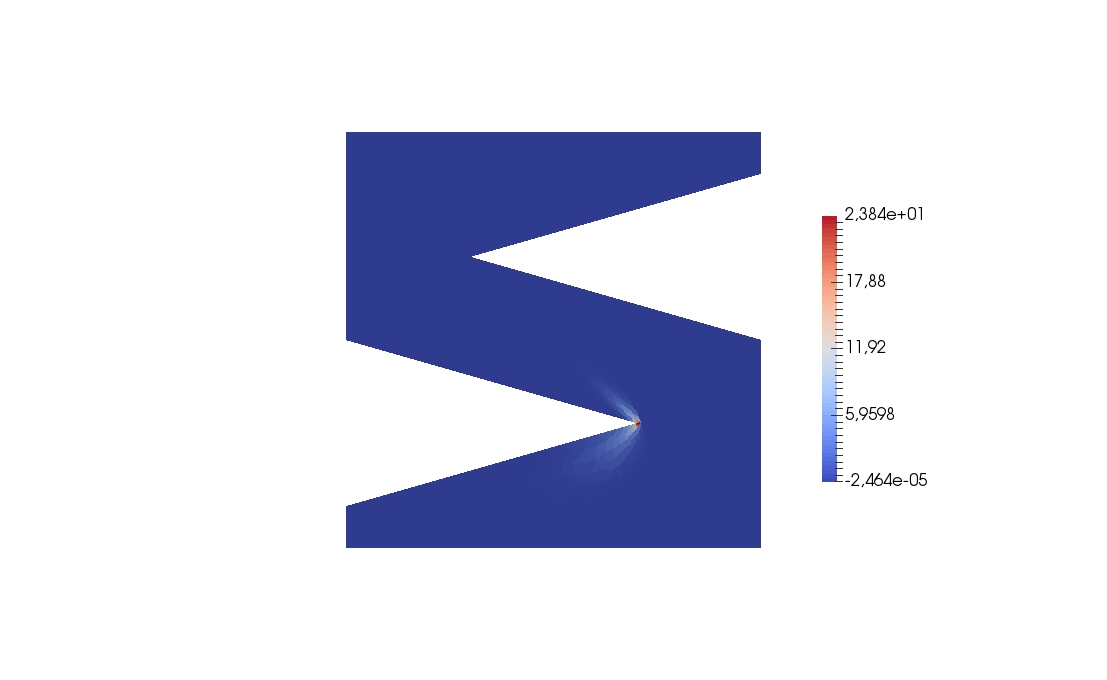}}\\
     \subfloat[$t = 0.3$]{\includegraphics[scale=.25,trim =  350 110 300 120,clip]{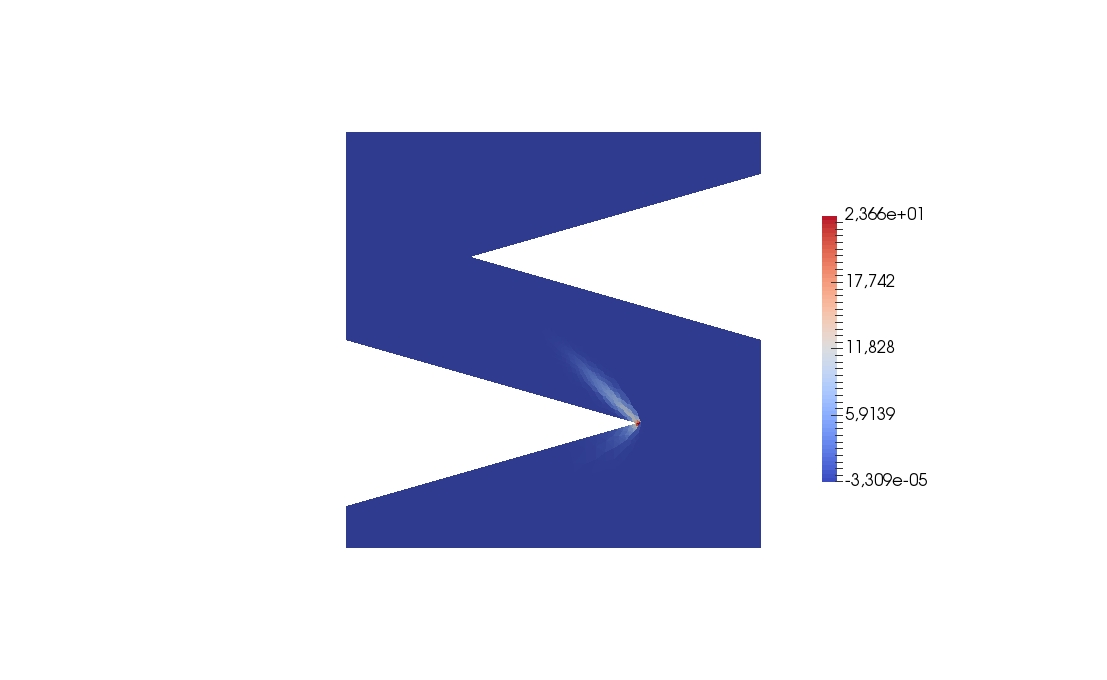}}
     \subfloat[$t = 0.4$]{\includegraphics[scale=.25,trim =  350 110 300 120,clip]{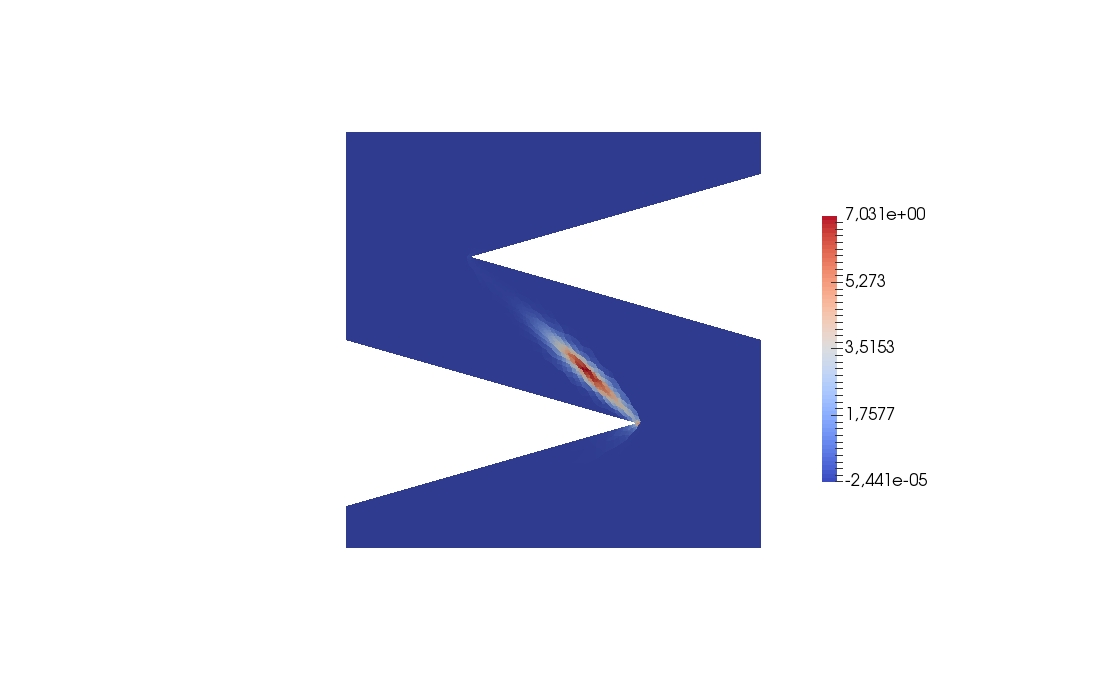}}
     \subfloat[$t = 0.5$]{\includegraphics[scale=.25,trim =  350 110 300 120,clip]{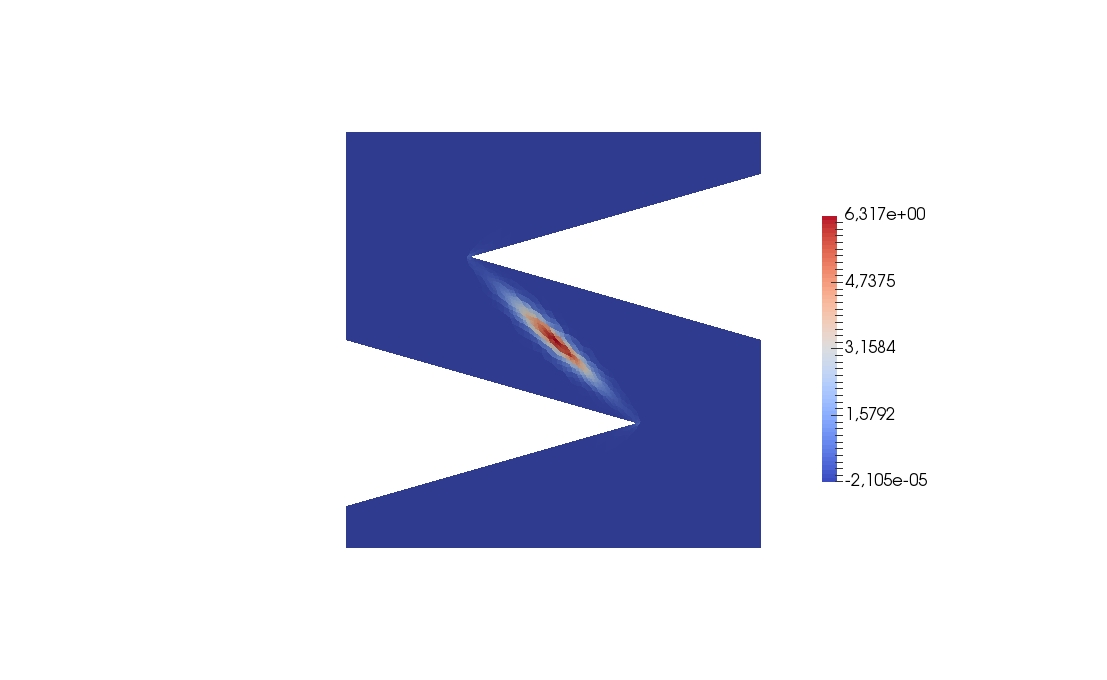}}
     \caption{Density evolution on the non-convex domain without regularization, $V_h = \mc{RT}_0$, $X_h^1$ (color scale is rescaled to fit data range).}
     \label{fig:Zevol}
\end{figure}

\begin{figure}[h]
\captionsetup[subfigure]{labelformat=empty}
     \centering
     \subfloat[$t = 0$]{\includegraphics[scale=.25,trim = 350 110 300 120,clip]{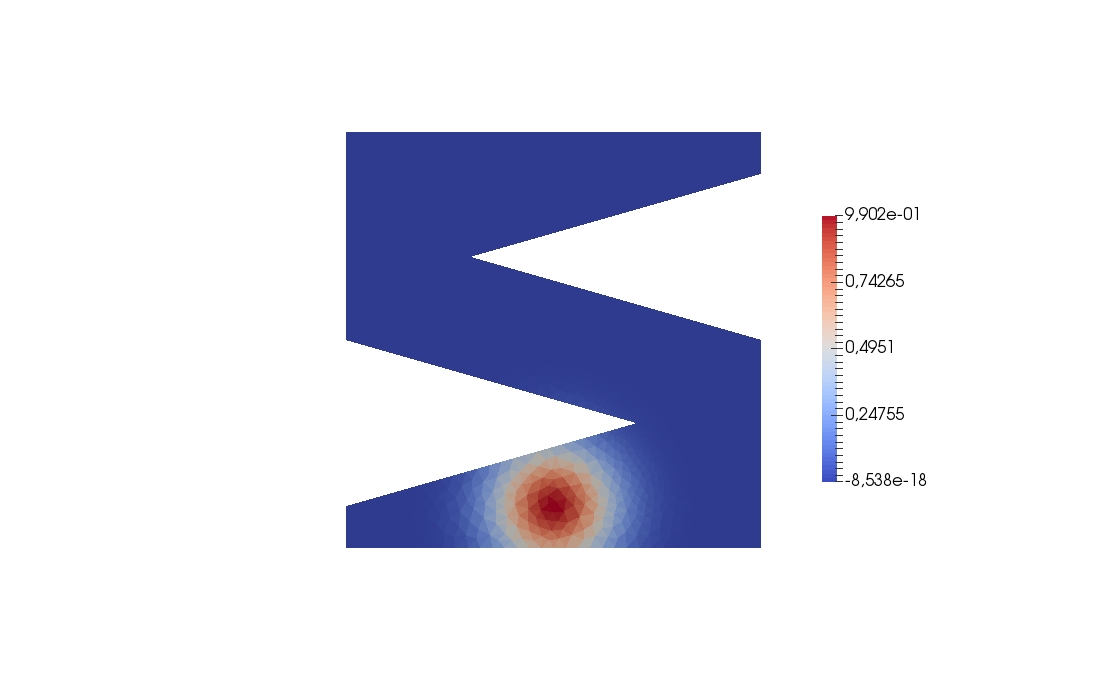}}
     \subfloat[$t = 0.1$]{\includegraphics[scale=.25,trim =  350 110 300 120,clip]{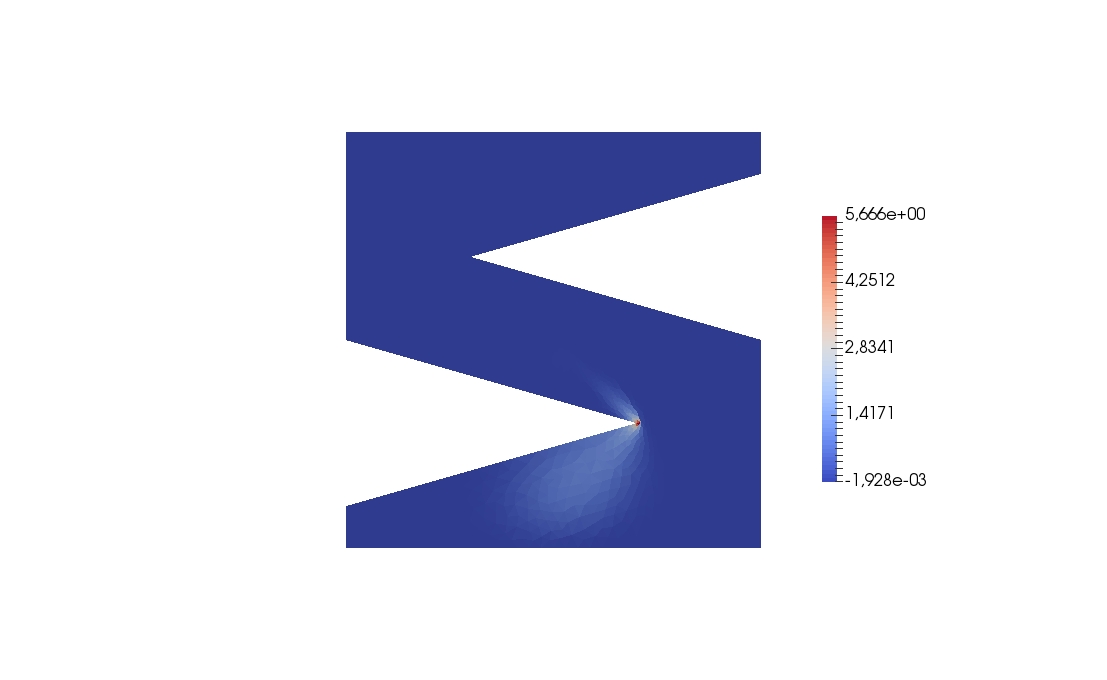}}
     \subfloat[$t = 0.2$]{\includegraphics[scale=.25,trim =  350 110 300 120,clip]{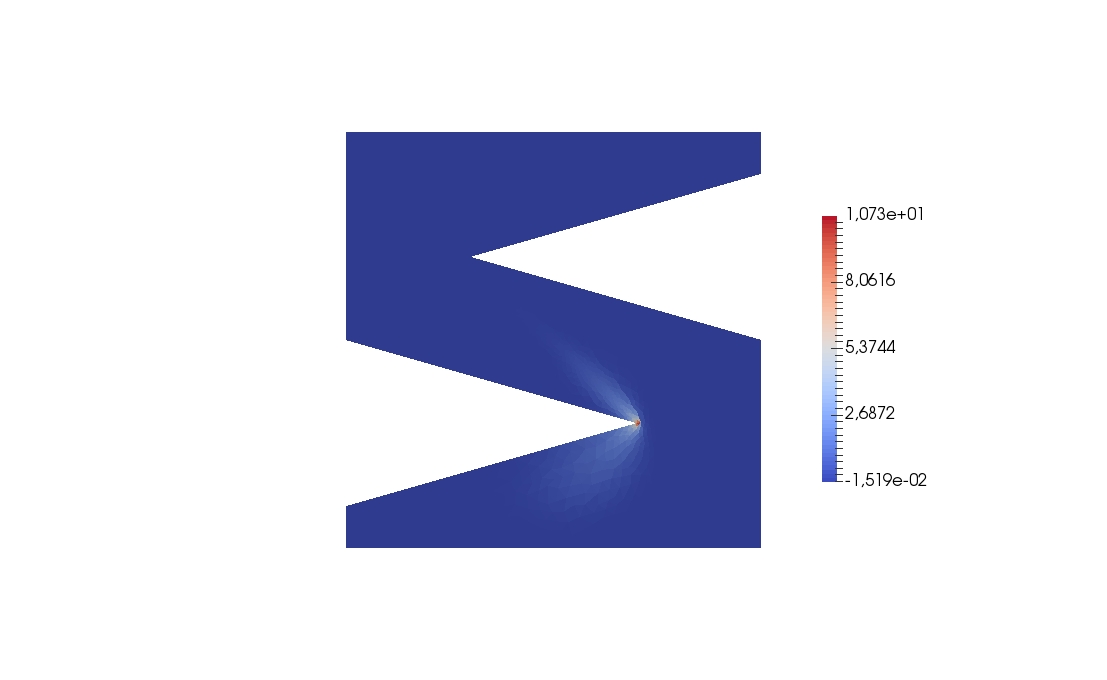}}\\
     \subfloat[$t = 0.3$]{\includegraphics[scale=.25,trim =  350 110 300 120,clip]{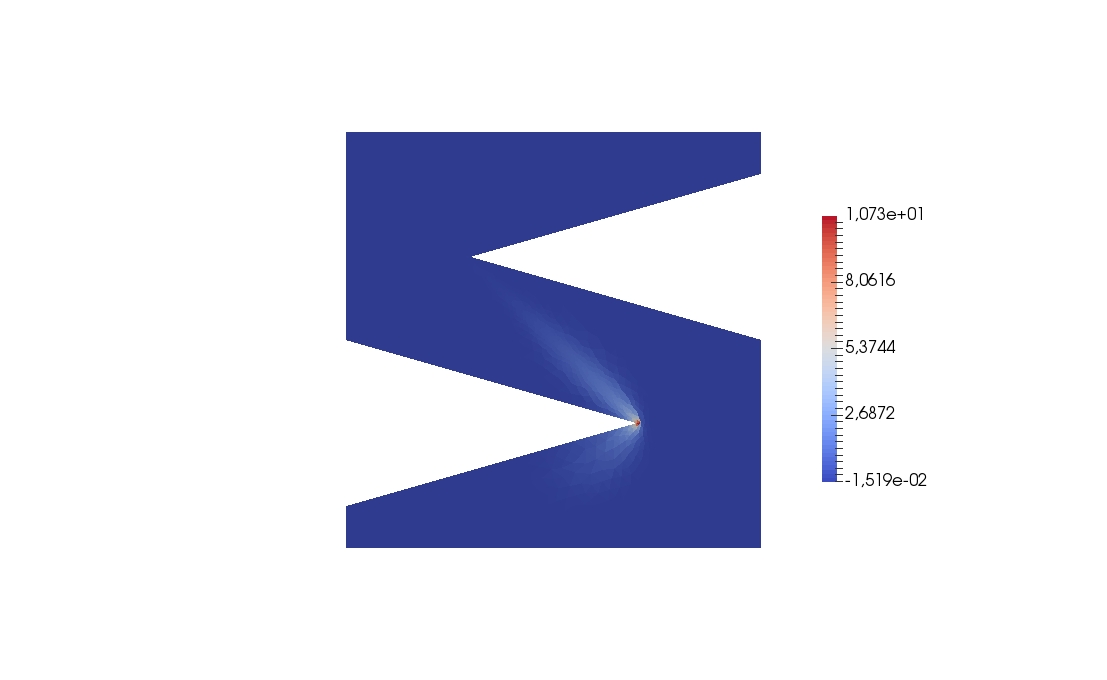}}
     \subfloat[$t = 0.4$]{\includegraphics[scale=.25,trim =  350 110 300 120,clip]{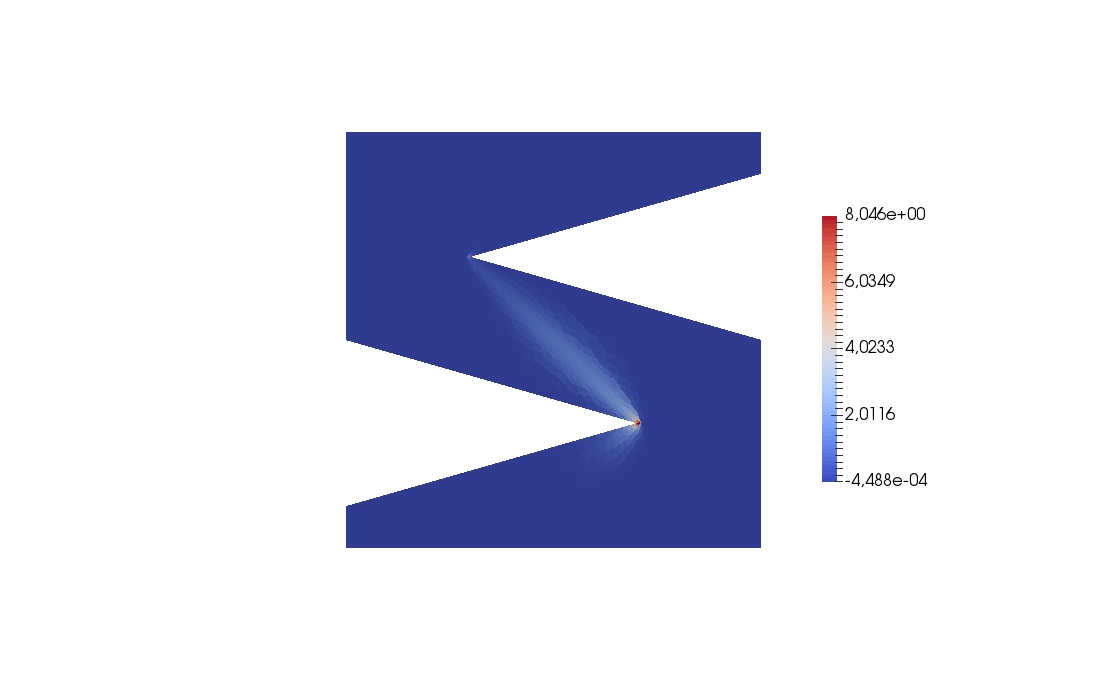}}
     \subfloat[$t = 0.5$]{\includegraphics[scale=.25,trim =  350 110 300 120,clip]{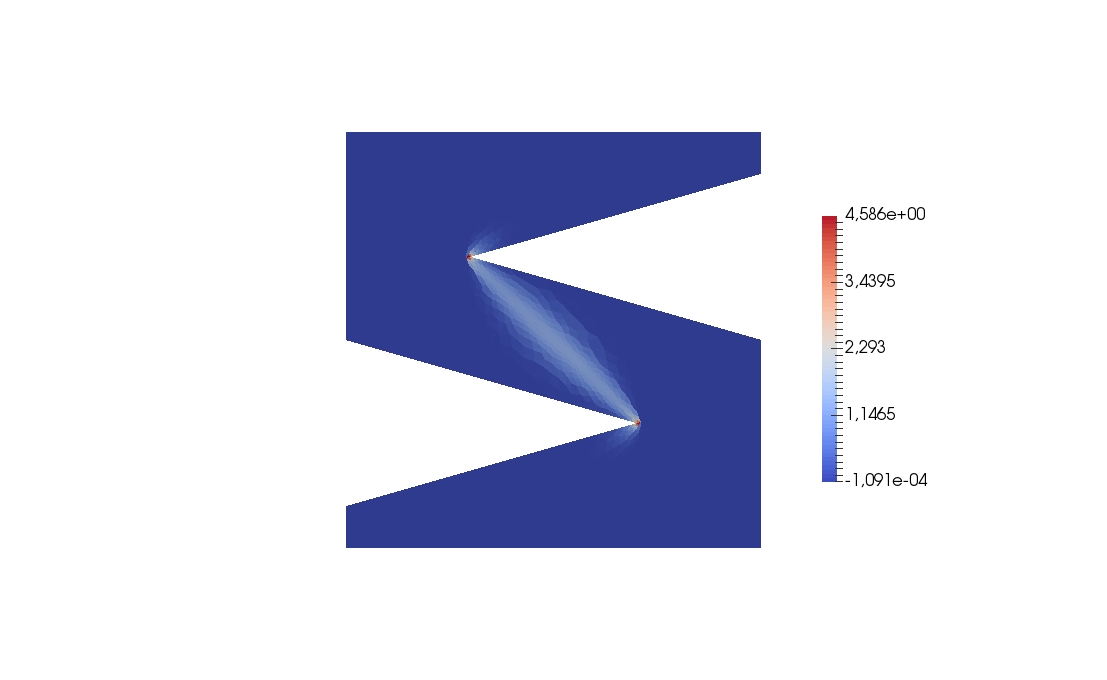}}
     \caption{Density evolution on the non-convex domain with $L^2$ regularization, $\alpha=0.002$, $V_h = \mc{RT}_0$, $X_h^1$ (color scale is rescaled to fit data range).}
     \label{fig:ZevolregL}
\end{figure}

\begin{figure}[h]
\captionsetup[subfigure]{labelformat=empty}
     \centering
     \subfloat[$t = 0$]{\includegraphics[scale=.25,trim = 350 110 300 120,clip]{Z00reg.jpg}}
     \subfloat[$t = 0.1$]{\includegraphics[scale=.25,trim =  350 110 300 120,clip]{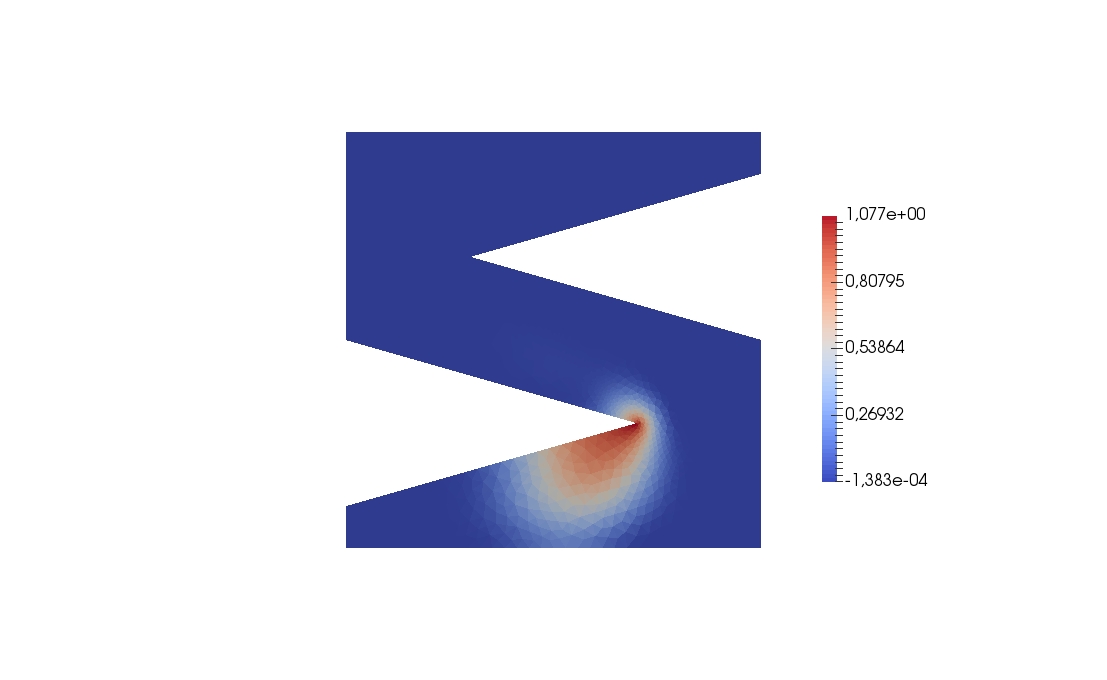}}
     \subfloat[$t = 0.2$]{\includegraphics[scale=.25,trim =  350 110 300 120,clip]{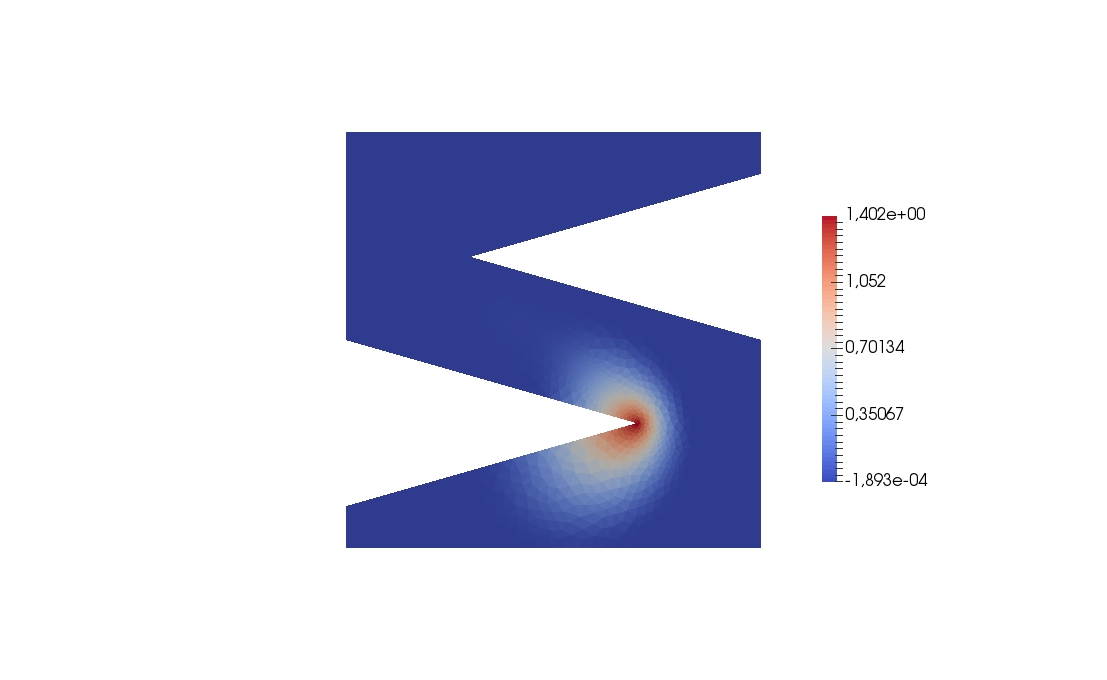}}\\
     \subfloat[$t = 0.3$]{\includegraphics[scale=.25,trim =  350 110 300 120,clip]{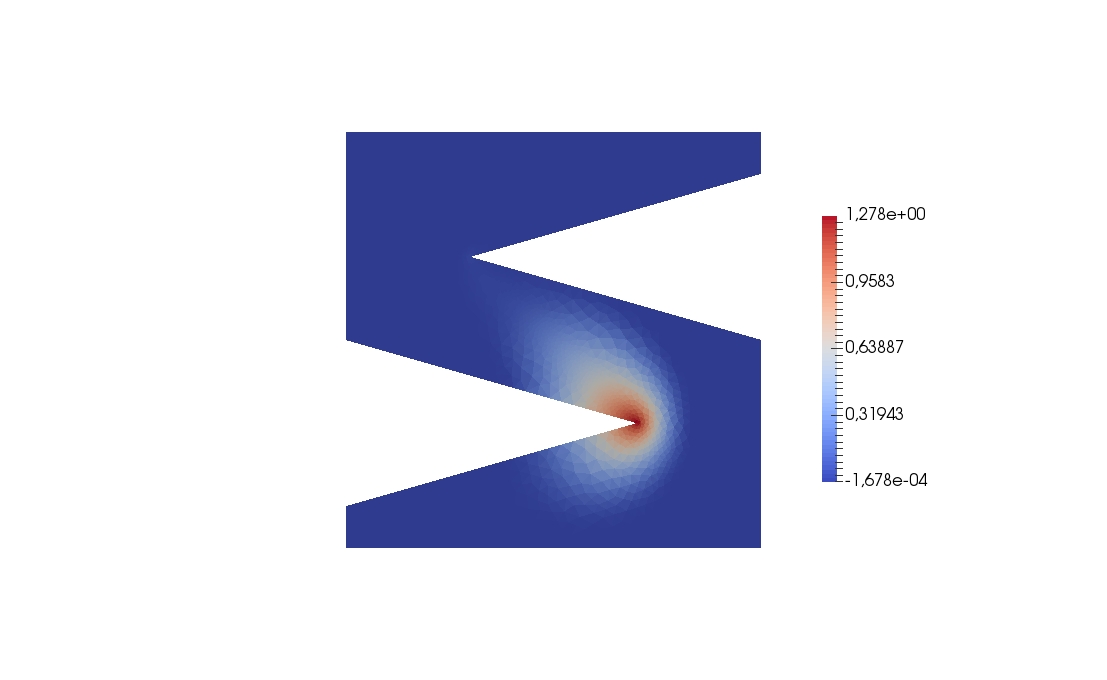}}
     \subfloat[$t = 0.4$]{\includegraphics[scale=.25,trim =  350 110 300 120,clip]{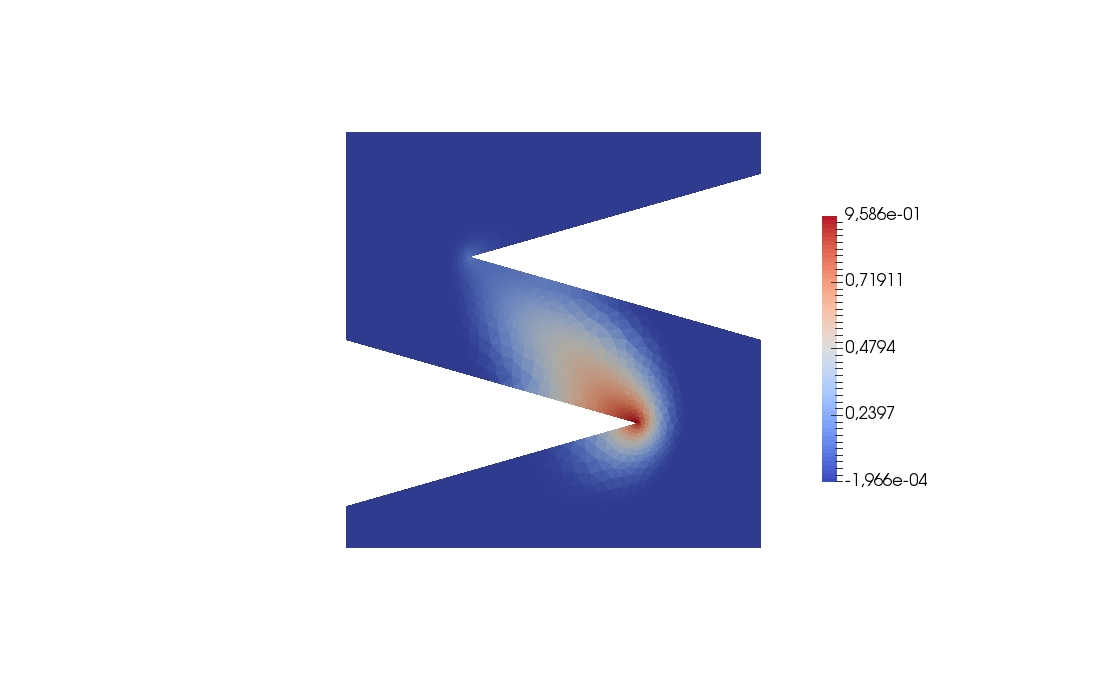}}
     \subfloat[$t = 0.5$]{\includegraphics[scale=.25,trim =  350 110 300 120,clip]{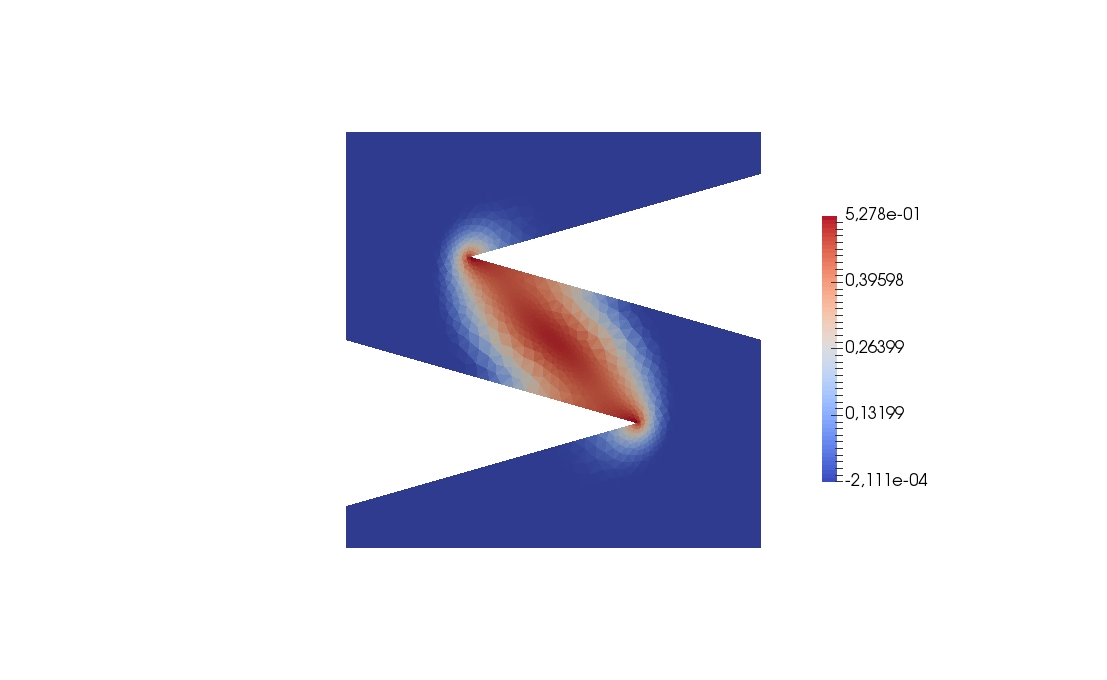}}
     \caption{Density evolution on the non-convex domain with $H^1$ regularization, $\alpha=0.002$, $V_h = \mc{RT}_0$, $X_h^1$ (color scale is rescaled to fit data range).}
     \label{fig:Zevolreg}
\end{figure}

\begin{figure}[h]
\centering
\includegraphics[scale=.65,trim = 0 0 5 0,clip]{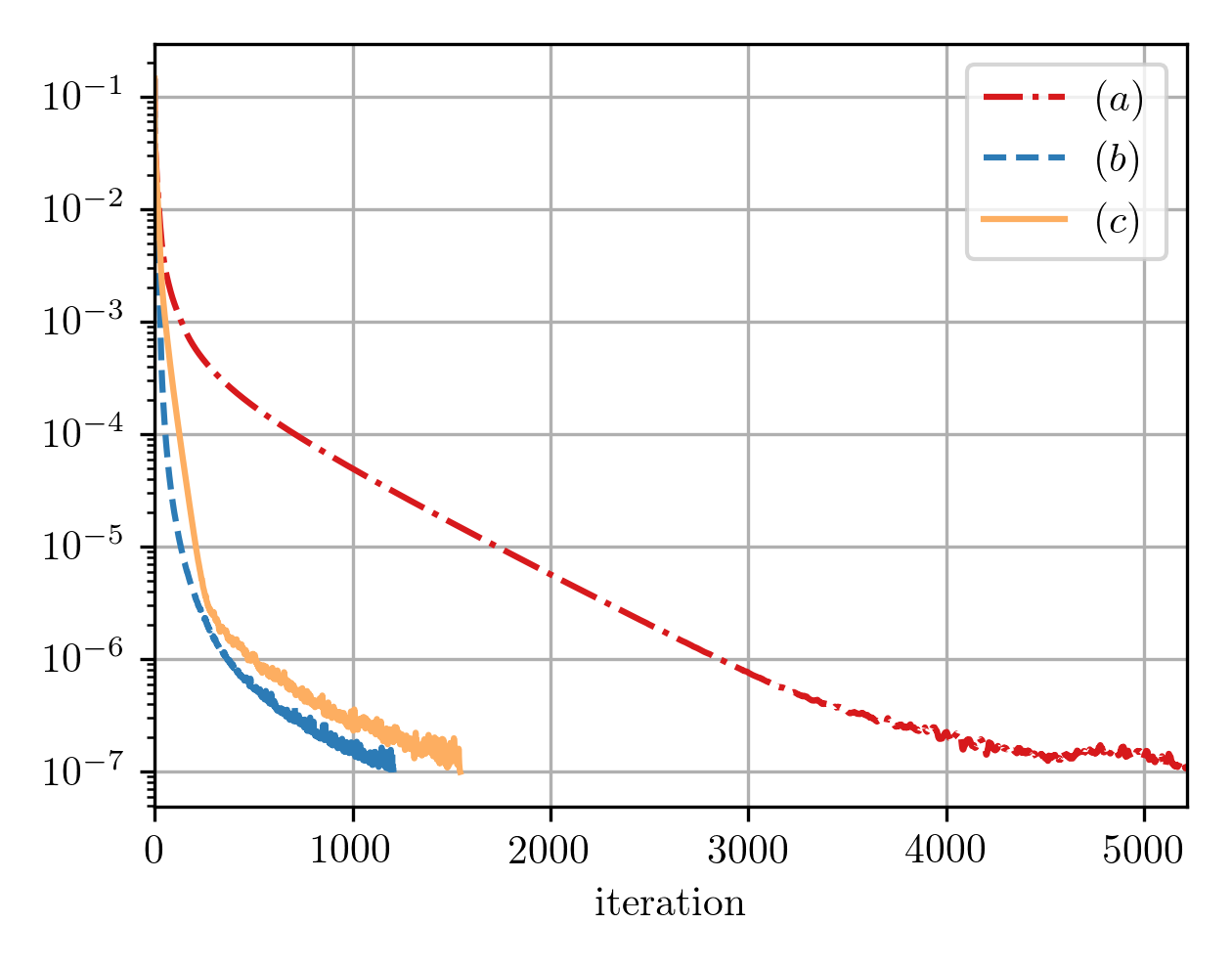}
\caption{Convergence of the proximal splitting algorithm measured by $\|\sigma_{n+1}-\sigma_n\|_{L^2(
\Omega)}$ for non-convex domain test without regularization (a); with the $H^1$ regularization and $\alpha = 0.002$ (b); with the $L^2$ regularization and $\alpha = 0.002$ (c).}
\label{fig:Zconv}
\end{figure}

\clearpage

\appendix

\section{Proof of Theorem \ref{th:convergence}}

Applying Theorem 2.16 in \cite{lavenant2019unconditional}, in order to prove Theorem \ref{th:convergence} it is sufficient to check that the conditions listed in definition 2.9 of \cite{lavenant2019unconditional} are verified. Such conditions translated to our finite element settings are listed in Proposition \ref{prop:properties} below. 

From now on, we assume $r=1$, and $X_h$ stands for $X_h^1$. We also denote by $\mc{I}$ is the standard nodal interpolant onto $X_h$, defined element by element.

First of all, we introduce some notation and list some technical results \cite{douglas1974stability}. Denote by $P_{X_h}$ and $P_{V_h}$ the $L^2$ projections  onto $X_h$ and $V_h$, respectively. Then,
\[
\|P_{X_h} \varphi \|_{L^p} \leq C \| \varphi \|_{L^p} \,,\quad \forall \varphi \in L^p \,,\,  1\leq p\leq \infty\,,
\]
and moreover $\forall\,  T\in \mc{T}_h$
\[
\|\varphi - P_{X_h} \varphi \|_{L^p(T)} \leq C h_T \|\nabla \varphi \|_{L^p(T)} \,,\quad \forall \varphi \in W^{1,p}(T)\,,\,  1\leq p\leq \infty\,,
\]
where, with an abuse of notation, we have used $P_{X_h}$ to denote the $L^2$ projection onto $X_h(T)$.
These imply the following lemma.
\begin{lemma}\label{lem:interp}
Given the regularity assumption in \eqref{eq:Cmesh} on $\mc{T}_h$, we have 
\[
\| \mc{I} |P_{V_h} b|^2 \|_{L^\infty} \leq C \| b\|^2_{L^\infty}\,,
\]
for any $b\in L^\infty(D)$, and
\[
\| \mc{I} |P_{V_h} b|^2 -|b|^2 \|_{L^\infty} \leq C h | b|_{W^{1,\infty}}\| b\|_{L^{\infty}}\,,
\]
for any $b\in W^{1,\infty}(D)$.
\end{lemma}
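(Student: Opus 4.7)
The plan is to reduce both bounds to two ingredients: (i) an elementwise maximum bound for the nodal interpolant $\mc{I}$, coming from the fact that on each $T \in \mc{T}_h$ the shape function space $X_h^1(T)$ is spanned by a nonnegative partition of unity at the vertices (the barycentric coordinates in the simplicial case, the tensor product $\mc{Q}_1$ basis in the quadrilateral case), so that
\[
\|\mc{I} g\|_{L^\infty(T)} \leq \max_{v \,\text{vertex of}\, T} |g(v)| \leq \|g\|_{L^\infty(T)}
\]
for any $g$ continuous on $T$; and (ii) $L^\infty$ stability together with a first order $L^\infty$ approximation estimate for the $L^2$ projection $P_{V_h}$, which hold under the quasi-uniformity assumption \eqref{eq:Cmesh} along the lines of the Douglas-Dupont-Wahlbin arguments invoked in the excerpt for $P_{X_h}$.

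The first inequality would then follow by applying (i) elementwise to $g = |P_{V_h} b|^2$, which is polynomial (and so continuous) on each element, to obtain $\|\mc{I}|P_{V_h} b|^2\|_{L^\infty} \leq \|P_{V_h} b\|_{L^\infty}^2$, and then using the $L^\infty$ stability from (ii).

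For the second inequality I would split through the intermediate term $\mc{I}|b|^2$:
\[
\|\mc{I}|P_{V_h} b|^2 - |b|^2\|_{L^\infty} \leq \|\mc{I}(|P_{V_h} b|^2 - |b|^2)\|_{L^\infty} + \|\mc{I}|b|^2 - |b|^2\|_{L^\infty}.
\]
The second summand is controlled by the standard $O(h)$ interpolation error bound applied to $|b|^2$, using $\|\nabla |b|^2\|_{L^\infty} \leq 2 \|b\|_{L^\infty} |b|_{W^{1,\infty}}$. For the first summand, I would use (i) to pull $\mc{I}$ inside an $L^\infty$ norm and then apply the identity $|u|^2 - |v|^2 = (u - v)\cdot(u + v)$ pointwise, giving
\[
\|\mc{I}(|P_{V_h} b|^2 - |b|^2)\|_{L^\infty} \leq \|P_{V_h} b - b\|_{L^\infty} \bigl(\|P_{V_h} b\|_{L^\infty} + \|b\|_{L^\infty} \bigr),
\]
and conclude by combining the $L^\infty$ stability with the first order $L^\infty$ approximation estimate $\|P_{V_h} b - b\|_{L^\infty} \leq C h |b|_{W^{1,\infty}}$.

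The main difficulty lies entirely in ingredient (ii). Because $V_h$ enforces $H(\mr{div})$-conformity, $P_{V_h}$ is nonlocal: normal continuity across faces couples degrees of freedom between neighboring elements, so the $L^\infty$ bounds cannot be deduced from an element-by-element argument as in the easier $L^2$-discontinuous case of $X_h$. The standard remedy is to compare $P_{V_h} b$ with a local quasi-interpolant in $V_h$ and to control the difference by combining the $L^2$ stability of $P_{V_h}$ with a global inverse inequality, both of which rely on the quasi-uniformity \eqref{eq:Cmesh}; this yields at once the $L^\infty$ stability and the $O(h)$ approximation rate for $b\in W^{1,\infty}$, after which the rest of the argument becomes routine.
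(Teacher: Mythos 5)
Your reduction to (i) the elementwise bound $\|\mc{I}g\|_{L^\infty(T)}\le \|g\|_{L^\infty(T)}$ and (ii) $L^\infty$ stability plus a first-order $L^\infty$ error bound for $P_{V_h}$ is sound as a reduction, and modulo (ii) the rest of your argument is the same splitting the paper uses. But (ii) — which you correctly single out as the main difficulty — is exactly where there is a genuine gap: the remedy you sketch does not deliver it. Comparing $P_{V_h}b$ with a local quasi-interpolant $\Pi b\in V_h$ and invoking the $L^2$ best-approximation property of $P_{V_h}$ only controls $\|P_{V_h}b-\Pi b\|_{L^2(D)}$ \emph{globally}, since $P_{V_h}$ is a nonlocal projection; the inverse inequality then gives $\|P_{V_h}b-\Pi b\|_{L^\infty}\le C h^{-d/2}\|P_{V_h}b-\Pi b\|_{L^2(D)}\le C h^{-d/2}\,2\|b-\Pi b\|_{L^2(D)}\le C h^{1-d/2}|b|_{W^{1,\infty}}$, which loses a factor $h^{-d/2}$: for $d=2$ you get only $O(1)$ (no rate), and for $d=3$ not even boundedness. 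Genuine $L^\infty$ stability of a global $L^2$ projection requires localization/decay arguments of Douglas--Dupont--Wahlbin or Crouzeix--Thom\'ee type, which are available for element-local projections such as $P_{X_h}$ (this is what \cite{douglas1974stability} provides) and for Lagrange elements, but are not off-the-shelf for the $H(\mathrm{div})$-conforming spaces $\mc{RT}_0$, $\mc{BDM}_1$, $\mc{RT}_{[0]}$. As written, ingredient (ii) is an unproved claim that is arguably harder than the lemma itself.

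The paper's proof avoids ever estimating $P_{V_h}$ in $L^\infty$. It exploits the fact that the shape functions of $V_h$ are piecewise linear, so $V_h\subset (X_h^1)^d$ and hence $\|P_{V_h}b\|_{L^2}\le \|P_{(X_h)^d}b\|_{L^2}$, and it shuttles between $L^\infty$, $L^1$ and $L^2$ norms via inverse inequalities (this is where \eqref{eq:Cmesh} enters), so that all stability and approximation input is needed only for the element-local projection $P_{X_h}$, whose $L^p$ stability and first-order error bounds are standard. In particular, for the second estimate the intermediate comparison is with $|P_{(X_h)^d}b|^2$, and the problematic term is handled as $\||P_{V_h}b|^2-|P_{(X_h)^d}b|^2\|_{L^\infty}\le C h^{-d}\||P_{V_h}b|^2-|P_{(X_h)^d}b|^2\|_{L^1}$, then reduced to $\|P_{V_h}b-P_{(X_h)^d}b\|_{L^2}$ and to gradient bounds on $P_{X_h}b$. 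If you want to keep your structure, you would first have to establish the $L^\infty$ stability and $O(h)$ $L^\infty$ approximation of the $H(\mathrm{div})$ $L^2$ projection as a separate (nontrivial) result; otherwise, replace step (ii) by a comparison with $P_{(X_h)^d}b$ along the lines above.
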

\begin{proof}
For the first inequality, using standard inverse inequalities, we have 
\[
\begin{aligned}
\| \mc{I} |P_{V_h} b| ^2 \|_{L^\infty}& \leq \||P_{V_h} b| ^2  \|_{L^\infty}\\
& \leq C h^{-d} \||P_{V_h} b| ^2  \|_{L^1}\\
& = C h^{-d} \|P_{V_h} b  \|^2_{L^2} \\
& \leq C h^{-d} \|P_{(X_h)^d} b  \|^2_{L^2} \\
& \leq C \|P_{(X_h)^d} b  \|^2_{L^\infty} \\
& \leq C \|b  \|^2_{L^\infty}\,.
\end{aligned}
\]
For the second inequality , we observe that
\[
\| \mc{I} |P_{V_h} b|^2 -|b|^2 \|_{L^\infty} \leq\| \mc{I} |P_{V_h} b|^2 - \mc{I} |b|^2 \|_{L^\infty}  +\| \mc{I} |b|^2 -|b|^2 \|_{L^\infty} \,.
\]
The second term of the right-hand side is easy to control. For the first term, we have
\[
\begin{aligned}
\| \mc{I} |P_{V_h} b|^2 - \mc{I} |b|^2 \|_{L^\infty} & \leq \| |P_{V_h} b|^2 - |b|^2 \|_{L^\infty} \\& \leq \| |P_{V_h} b|^2 - |P_{(X_h)^d} b|^2 \|_{L^\infty} +\| |b|^2 - |P_{(X_h)^d} b|^2 \|_{L^\infty} \,.
\end{aligned}
\]
Again, the second term is easy to control. For the first tem, using the same reasoning as above,
\[
\begin{aligned}
\| |P_{V_h} b|^2 - |P_{(X_h)^d} b|^2 \|_{L^\infty} & \leq C h^{-d} \| |P_{V_h} b|^2 - |P_{(X_h)^d} b|^2 \|_{L^1}\\
& \leq C h^{-d} \sum_{i=1}^d \| (P_{V_h} b)_i^2 - (P_{X_h} b_i)^2 \|_{L^1}\\
& \leq C h^{-d} \sum_{i=1}^d \| (P_{V_h} b)_i - P_{X_h} b_i \|_{L^1} \|b\|_{L^\infty}\\
& \leq C h^{-\frac{d}{2}} \| P_{V_h} b - P_{X_h} b \|_{L^2} \|b\|_{L^\infty} \\
& \leq C h \|\nabla P_{X_h} b \|_{L^\infty} \|b\|_{L^\infty} \leq C h \|\nabla b \|_{L^\infty} \|b\|_{L^\infty} \,.
\end{aligned}
\]
\end{proof}

\modif{As mentioned in Section \ref{sec:femd}, there exist projection operators $\Pi_{Q_h}:L^2(D)\rightarrow V_h$ and $\Pi_{V_h}:\mathcal{V}_D \rightarrow Q_h$ commuting with the divergence operator, where $\mathcal{V}_D$ is a dense subset of $H(\mathrm{div};D)$. We pick these to be the canonical projections introduced in Section 5.2 of \cite{arnold2006finite}, and in particular $\Pi_{Q_h}$ as in equation \eqref{eq:PiQh}. 
Such operators verify the following approximation properties (see Theorem 5.3 in \cite{arnold2006finite}): for any $\varphi\in H^1(D)$ and $\eta \in H^1(D)^d$
\begin{equation}\label{eq:approx}
\| \Pi_{Q_h} \varphi - \varphi\|_{L^2(D)} \leq C h \|\varphi\|_{H^1(D)} \,, \quad \| \Pi_{V_h} \eta - \eta\|_{L^2(D)^d} \leq C h \|\eta\|_{H^1(D)^d}\,.
\end{equation}
Notice in particular that given the mesh regularity assumption \eqref{eq:Cmesh}, equation \eqref{eq:approx} is a standard property for $\Pi_{Q_h}$ as defined in equation \eqref{eq:PiQh}.}

Proposition \ref{prop:properties} below contains the properties needed for convergence: it can be seen as a specific instance of Definition 2.9 of \cite{lavenant2019unconditional}. Note that a few of the properties listed therein are omitted here because they are either unnecessary or true by construction in our setting. Note also that the sampling operators used in \cite{lavenant2019unconditional} are replaced here with the canonical projections $\Pi_{Q_h}$ and $\Pi_{V_h}$, where $\Pi_{Q_h}$ can be naturally extended to $\mc{M}(D)$ (see equation \eqref{eq:PiQh}) and $\Pi_{V_h}$ is considered to be defined on a dense subset of $\mc{M}(D)^{d}$. Moreover the reconstruction operators are simply the injection operators from $Q_h$ and $V_h$ to $\mc{M}(D)$ and $\mc{M}(D)^d$, respectively. Finally, we define for any $(\rho,b)\in \mc{M}(D) \times C(D;\mathbb{R}^d)$
\[
A^*(\rho,b) \coloneqq \int_D \frac{|b|^2}{2} \rho\,,
\]
so that if $(\rho ,m) \in \mc{M}_+(D) \times \mc{M}(D)^d$ then 
\[
A(\rho,m) = \sup_{b \in  C(D;\mathbb{R}^d)} \langle m,b\rangle -A^*(\rho,b); 
\]
and for any $(\rho,b) \in Q_h \times V_h$,
 \[
A^*_h(\rho,b) \coloneqq \sup_{m\in V_h} \langle m,b \rangle -A_h(\rho,m)\,.
\]

\begin{proposition}\label{prop:properties}
The following properties hold:
\begin{enumerate}
\item For any $\rho \in \mc{M}_+(D)$, $\Pi_{Q_h} \rho \rightarrow \rho$ as $h\rightarrow 0$ weakly in $\mc{M}(D)$.
\item Let $B\subset (C^1(D))^d$ a bounded subset. Then there exists a constant $\varepsilon_h$ tending to $0$ as $h\rightarrow 0$ such that for any $b\in B$ and $\rho \in Q_h$
\[
A^*_h(\rho,P_{V_h} b) \leq A^*(\rho,b) +\epsilon_h \| \rho \|\,,
\]
where $P_{V_h}$ denotes the $L^2$ projection onto $V_h$. Moreover there exists a constant $C\geq 1$ such that for any $b\in C(D)^d$, there holds
\[
A^*_h(\rho,P_{V_h} b) \leq \frac{C}{2} \| \rho \| \| b \|^2_{L^\infty}\,.
\]
\item Let $B\subset C^0(D)\cap H^1(D)$ a bounded subset such that for all $\rho \in B$ there holds $\rho>C>0$, and let $B' \subset (C^0(D) \cap H^1(D))^d$ a bounded subset. There exists a constant $\varepsilon_h$ tending to $0$ as $h\rightarrow 0$ such that, given $(\rho,m)\in \mc{M}(D)^{d+1}$ such that $\rho$ has density in $B$ and $m$ in $B'$, then
\[
A_{h}(\Pi_{Q_h} \rho, \Pi_{V_h} m) \leq A(\rho,m) + \varepsilon_h.
\]
\item There exists $\varepsilon_h$ tending to $0$ as $h\rightarrow 0$ and a continuous function $\omega$ satisfying $\omega(0)=0$ such that: for any $x,y\in D$ and $h>0$ there exists $\rho \in Q_h^+$ and $m_1,m_2 \in V_h$ satisfying
\begin{equation}\label{eq:stabilitydiracs}
\left \{
\begin{array}{l}
\mr{div}\, m_1 = \rho - \Pi_{Q_h} (\delta_x) \\
\mr{div}\, m_2 = \rho - \Pi_{Q_h} (\delta_y)
\end{array} \right.
\quad \text{and} \quad A_h(\rho,m_i) \leq \omega(|x-y|)+\varepsilon_h\,,\forall i\in\{1,2\}.
\end{equation}  
\end{enumerate}
\end{proposition}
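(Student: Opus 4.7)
The plan is to verify the four statements by exploiting the structure of the discrete action $A_h$, the approximation properties of the canonical projections $\Pi_{Q_h},\Pi_{V_h}$ (see \eqref{eq:approx}), the nodal interpolator $\mc{I}$ via Lemma \ref{lem:interp}, and the $L^2$ projections onto $X_h$ and $V_h$. Property (1) is immediate: since $\Pi_{Q_h}$ is $L^2$-self-adjoint, $\langle\Pi_{Q_h}\rho,\varphi\rangle=\langle\rho,\Pi_{Q_h}\varphi\rangle$ for any $\varphi\in C(D)$, and the element-average definition \eqref{eq:PiQh} yields $\Pi_{Q_h}\varphi\to\varphi$ uniformly on the compact set $D$, from which weak convergence $\Pi_{Q_h}\rho\rightharpoonup\rho$ follows.

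For (2), I would exhibit an admissible test pair $(a,\tilde b)\in(X_h^1)^{d+1}\cap\mc{K}$ giving a sharp lower bound on $A_h(\rho,m)$. Setting $\tilde b=P_{V_h}b$, which lies in $V_h\subset(X_h^1)^d$ element-wise, and $a=-\tfrac12\mc{I}|P_{V_h}b|^2-C_B h$, with $C_B h\ge 0$ chosen via the second estimate of Lemma \ref{lem:interp} to absorb the gap between $\mc{I}|P_{V_h}b|^2$ and $|P_{V_h}b|^2$ and enforce the pointwise constraint $a+|\tilde b|^2/2\le 0$. Plugging this test into the Legendre definition of $A_h^*$ and replacing $\mc{I}|P_{V_h}b|^2$ by $|b|^2$ up to $O(h)$ (using Lemma \ref{lem:interp} again) yields the first bound uniformly on the bounded set $B$, with $\varepsilon_h=O(h)$. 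The second, cruder estimate is obtained similarly by choosing $a$ as an element-wise constant absorbing $\|P_{V_h}b\|^2_{L^\infty(T)}$ and using only the first inequality of Lemma \ref{lem:interp}.

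For (3), note first that $\rho>C>0$ implies $\Pi_{Q_h}\rho\ge C$ by \eqref{eq:PiQh}. For any admissible $(a,b)\in(X_h^1)^{d+1}\cap\mc{K}$, using $\Pi_{Q_h}\rho\ge 0$ together with $a\le-|b|^2/2$ and completing the square in $b$ gives the pointwise chain
\[
\langle\Pi_{Q_h}\rho,a\rangle+\langle\Pi_{V_h}m,b\rangle\le\int_D\left(\Pi_{V_h}m\cdot b-\tfrac12\Pi_{Q_h}\rho\,|b|^2\right)dx\le\int_D\frac{|\Pi_{V_h}m|^2}{2\,\Pi_{Q_h}\rho}\,dx.
\]
Taking the supremum over admissible $(a,b)$, it then suffices to show that the right-hand side approaches $A(\rho,m)=\int_D|m|^2/(2\rho)\,dx$ as $h\to 0$, uniformly over $B\times B'$. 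This follows from the $L^2$-approximation estimates \eqref{eq:approx}, the uniform lower bound on $\Pi_{Q_h}\rho$, and the uniform $L^\infty$ control on the densities of $\rho,m$ provided by the assumptions on $B,B'$.

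For (4), I would take $\rho\in Q_h^+$ as the $\Pi_{Q_h}$-image of a smoothed bump of unit mass supported in a ball of radius $R\sim|x-y|+h$ centered at the midpoint of $x,y$, and define each $m_i\in V_h$ as the minimal-norm solution of the discrete mixed Poisson problem $\mathrm{div}\,m_i=\rho-\Pi_{Q_h}\delta_{z_i}$ with no-flux boundary data, which is well-posed by surjectivity of $\mathrm{div}:V_h\to Q_h$ and the inf-sup condition recalled in Section \ref{sec:proj}. Bounding $A_h(\rho,m_i)$ by $\int_D|m_i|^2/(2\rho)\,dx$ as in (3) and applying a standard scaling argument for the mixed Poisson problem yields $A_h(\rho,m_i)\lesssim R^2$, hence the claim with $\omega(s)\sim s^2$ and $\varepsilon_h\to 0$. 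The main obstacle is property (3), where one must match the discrete action evaluated at projected data to the continuous action quantitatively and uniformly over a class of smooth data; the combination of convex duality (completion of squares) with the approximation properties of the canonical mixed finite element projections is essential here, and it is precisely where the piecewise-linear structure of $X_h^1$ and the commuting diagram of the $H(\mr{div})$-conforming framework come together.
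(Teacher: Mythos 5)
Your treatments of points (1)--(3) are essentially the paper's own arguments: (1) by testing against element averages, (2) by saturating the constraint with $a=-\tfrac12\mc{I}|P_{V_h}b|^2$ (note the extra shift $-C_Bh$ is unnecessary, since $|P_{V_h}b|^2\le \mc{I}|P_{V_h}b|^2$ pointwise already makes the pair admissible) together with Lemma \ref{lem:interp}, and (3) via $A_h(\Pi_{Q_h}\rho,\Pi_{V_h}m)\le \int_D |\Pi_{V_h}m|^2/(2\Pi_{Q_h}\rho)$ followed by the approximation estimates \eqref{eq:approx}. These parts are fine.

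Point (4), however, contains a genuine gap. The minimal-$L^2$-norm solution $m_i\in V_h$ of the discrete mixed Neumann problem $\mathrm{div}\, m_i=\rho-\Pi_{Q_h}\delta_{z_i}$ is a discrete gradient of a globally coupled potential and does not vanish outside the ball of radius $R\sim|x-y|+h$ on which your $\rho$ is supported. On any element $T$ with $\rho|_T=0$ and $m_i|_T\neq 0$ one may take $b=\lambda\, m_i|_T$ on $T$ (admissible since $m_i|_T\in(\mc{P}_1(T))^d\subset (X_h^1(T))^d$), extended by zero, with $a=-\lambda^2\mc{I}|m_i|^2/2$ on $T$ and $a=0$ elsewhere; letting $\lambda\to\infty$ shows $A_h(\rho,m_i)=+\infty$, and likewise $\int_D|m_i|^2/(2\rho)=+\infty$. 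So the bound you invoke is vacuous for this choice of $m_i$. Moreover, even setting aside the support mismatch, the ``standard scaling argument'' for the mixed Poisson problem only controls the unweighted quantity $\|m_i\|_{L^2}^2$, i.e.\ an $H^{-1}$-type norm of $\rho-\Pi_{Q_h}\delta_{z_i}$; Dirac-like data are not in $H^{-1}$ for $d\ge 2$ and the discrete norm blows up as $h\to 0$, so no bound of the form $\lesssim R^2$ uniform in $h$ can come out of it. The weight $1/\rho$ in the action --- i.e.\ the requirement that the momentum live only where the density is comparatively large --- is exactly what makes the estimate possible, and it forces an explicit, localized construction.

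This is why the paper proceeds differently for (4): it builds $\rho$, $m_1$, $m_2$ supported on a chain of neighbouring elements joining $x$ to $y$, borrowing the finite-volume construction used in \cite{lavenant2019unconditional} for the scheme of \cite{gladbach2018scaling}, identifies the fluxes with $\mc{RT}_0$ basis functions, and then bounds $A(\rho,m_i)$ by the finite-volume action $A_h^{FV}(\rho,m_i)$ using the mesh-regularity assumption \eqref{eq:Cmesh}, for which the desired inequality \eqref{eq:stabilitydiracs} is already known. To repair your argument you would need a construction with the momentum vanishing wherever the density does (such as this chain construction), rather than the global minimal-norm solution of a discrete Poisson problem.
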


\begin{remark} 
In \cite{lavenant2019unconditional} point (3) of Proposition \ref{prop:properties} is stated with $B$ and $B'$ bounded subsets of $C^1(D)$ and $C^1(D)^d$, respectively. The condition we require here is stronger, but it is needed since we considered a convex polytope domain rather than a domain with a smooth boundary as in \cite{lavenant2019unconditional}. As a matter of fact, in \cite{lavenant2019unconditional} one applies the condition (3) on a regularized measure $(\tilde{\rho},\tilde{m})\in \mc{M}(D)^{d+1}$ obtained by convolution with the heat kernel and by solving an appropriate elliptic problem (see proposition 3.2 in \cite{lavenant2019unconditional}). For a convex polytope domain this procedure yields a couple $(\tilde{\rho},\tilde{m})$ with densities which are not $C^\infty$ given the singularities of the boundary. By classical elliptic regularity estimates on non-smooth domains (e.g., \cite{stampacchia1960problemi} and \cite{lieberman1988oblique}), the regularity we require in condition (3) is however sufficient for the proof in  \cite{lavenant2019unconditional} to apply without changes.
\end{remark}

\begin{proof}
The first point is immediate from the definition of $\Pi_{Q_h}$ in equation \eqref{eq:PiQh}. For (2), we observe that
\[
A_h(\rho,m) = \sup_{b\in X_h} \langle m,b\rangle -\frac{1}{2}\langle \rho, \mc{I} |b|^2  \rangle\,,
\]
where we recall that $\mc{I}$ is the standard element-wise nodal interpolant onto $X_h$. In fact, for any $b \in (X_h)^d$, we have $b^2 \leq \mc{I}|b|^2$, and therefore when $\rho \geq 0$ we can ``saturate" the constraint setting $a = -\mc{I}|b|^2/2$. On the other hand if $\rho<0$ on some element both sides of the equality are $+\infty$. For $(\rho,b,m) \in Q_h \times V_h\times V_h$ define
\[
{A}^{*}_{\mc{I},h} (\rho,b) \coloneqq  \frac{1}{2}\langle \rho, \mathcal{I} |b|^2  \rangle ,\quad \bar{A}_{\mc{I},h}(\rho,m) \coloneqq \sup_{b\in V_h} \langle m,b\rangle -{A}^{*}_{\mc{I},h} (\rho,b).
\]
Then, since when $\rho<0$ on some element $A^*_h(\rho,b)= -\infty$,
\[
A_h(\rho,m) \geq \bar{A}_{\mc{I},h}(\rho,m) ,\quad A^*_h(\rho,b) \leq {\bar{A}}^{*}_{\mc{I},h}(\rho,b) \leq {A}^{*}_{\mc{I},h}(\rho,b), 
\]
and we can prove (2) for $ A^*_{\mc{I},h}$. In particular, we have
\[
A^*_{\mc{I},h}(\rho,P_{V_h}b) \leq A^*(\rho,b)+ \frac{1}{2} \| \mc{I} |P_{V_h} b|^2 - |b|^2 \|_{L^\infty} \|\rho\|,
\]
and we obtain the result applying Lemma \ref{lem:interp}. Using again Lemma \ref{lem:interp}, we easily obtain the second bound as well.

For point (3), observe first that $A_h(\Pi_{Q_h}\rho, \Pi_{V_h} m)\leq A(\Pi_{Q_h}\rho, \Pi_{V_h} m)$ by definition. Then given the assumption on $\rho$ and $m$ we can simply write
\[
\begin{aligned}
A_h(\Pi_{Q_h}\rho, \Pi_{V_h} m) - A(\rho,m) & \leq \int_D \frac{|\Pi_{V_h} m|^2}{2 \Pi_{Q_h}\rho} - \frac{|m|^2}{2\rho} \\
& \leq \frac{1}{2} \int_D | \frac{|\Pi_{V_h} m|^2-|m|^2}{\Pi_{Q_h}\rho}| + | \frac{| m|^2}{ \Pi_{Q_h}\rho} - \frac{|m|^2}{\rho}| \\
& \leq C ( \| \Pi_{Q_h} \rho - \rho \|_{L^2} +  \| |\Pi_{V_h} m|^2-|m|^2 \|_{L^1})\,,  
\end{aligned}
\]
where the constant $C$ depends on the uniform lower bound on $\rho$ and on the $L^\infty$ norm of $|m|$. We conclude using Cauchy–Schwarz inequality on the second term and then equation \eqref{eq:approx}.

For the last point, we will establish a connection between our scheme and the one proposed by Gladbach, Kopfer and Maas \cite{gladbach2018scaling} and then use propoperty \eqref{eq:stabilitydiracs} for this scheme which was proved in \cite{lavenant2019unconditional}. \modif{We will consider only the case of a simplicial mesh and $V_h = \mc{RT}_0$ (which covers also the case of $V_h = \mc{BDM}_1$, since $\mc{RT}_0\subset \mc{BDM}_1$). The quadrilateral case with $V_h = \mc{RT}_{[0]}$ can be dealt with in a completely analogous  way.} 

First, we introduce some notation. For each $T\in\mc{T}_h$, let $\mc{T}_{h,T}$ be the set of neighbouring elements $L\in \mc{T}_h$ such that $f_{T,L}\coloneqq \overline{T}\cap \overline{L} \neq \emptyset$, which we assume to be oriented. Define by $\mc{F}_h$ the set of $(d-1)$-dimensional facets in the triangulation. Let $T,L \in \mc{T}_h$ be neighbouring elements, we denote by $\varphi_{T,L}\in \mc{RT}_0$ the canonical basis function associated with the oriented facet $f_{T,L}$. Then, any $m\in \mc{RT}_0$ can be written as
\[
m = \sum_{f_{T,L} \in \mc{F}_h} m_{T,L} \varphi_{T,L}\,,
\]
where $m_{T,L}$ is the flux of $m$ on the oriented facet $f_{T,L}$. In other words we can identify functions in $(\rho,m) \in Q_h \times \mc{RT}_0$ with their finite volume representation $\{\rho_T, m_{T,L}\}_{T,L}$. Then, we can interpret the action for the finite volume scheme  \cite{gladbach2018scaling}, which we denote by $\hatt{A}_h^{FV}(\rho,m)$, as a function on $Q_h\times \mc{RT}_0$. This is given by the following expression
\[
\hatt{A}_h^{FV}(\rho,m) \coloneqq \sum_{f_{T,L}\in \mc{F}_h} \frac{m_{T,L}^2}{2\theta(\rho_T,\rho_{L})} |f_{T,L}||x_T-x_L|\,,
\]
where $\theta:\mathbb{R}^+\times \mathbb{R}^+ \rightarrow \mathbb{R}^+$ is an appropriate function (see \cite{gladbach2018scaling}) which we take to be the harmonic mean.

Now, in order to construct $\rho \in Q_h^+$ and $m_1,m_2 \in \mc{RT}_0$ satisfying \eqref{eq:stabilitydiracs}, we use the same construction as in \cite{lavenant2019unconditional} for the finite volume scheme, and interpolate these to the spaces $\mc{RT}_0$ and $Q_h^+$ to obtain $\rho$, $m_1$ and $m_2$ satisfying
\[
\left \{
\begin{array}{l}
\mr{div}\, m_1 = \rho - \Pi_{Q_h} (\delta_x) \,,\\
\mr{div}\, m_2 = \rho - \Pi_{Q_h} (\delta_y)\,.
\end{array} \right.
\] 
In particular the support of $\rho$, $m_1$ and $m_2$ is a chain of neighbouring elements $T_1,\ldots, T_N$.
To prove the bound on the action, we observe that $A_h(\rho,m_i)\leq A(\rho,m_i)$. Then, we only need to bound $A(\rho,m_i)$ by the action of the finite-volume scheme  $\hatt{A}_h^{FV}(\rho,m_i)$, since $A_h^{FV}$ satisfies the desired inequality thanks to the regularity assumption \eqref{eq:Cmesh} on the mesh  \cite{lavenant2019unconditional}. 
By the regularity assumption on the triangulation, we can assume
\[
\int_{T\cup L} |\varphi_{T,L}|^2 \, \ed x \leq C  |f_{T,L}||x_T-x_L|
\]
uniformly. Then, by explicit calculations we obtain $A(\rho,m_i) \leq  C\hatt{A}_h^{FV}(\rho,m_i)$ and we are done.

\end{proof}

\section*{Acknowledgements}
The work of A. Natale was supported by the European Research Council (ERC project NORIA).
G. Todeschi acknowledges that this project has received funding
from the European Union’s Horizon 2020 research and innovation
programme under the Marie Skłodowska-Curie grant agreement No 754362.
\begin{center}
	\includegraphics[width=0.15\textwidth]{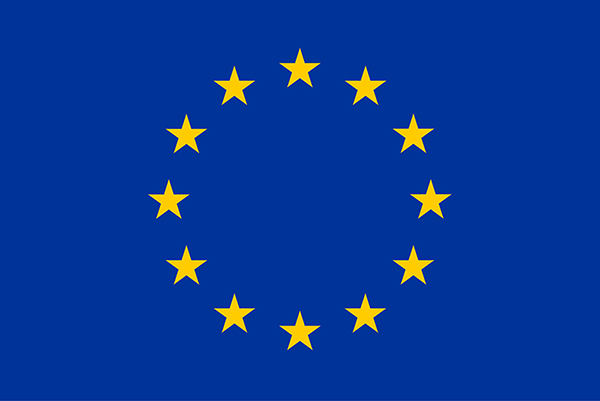}
\end{center}

\bibliographystyle{plain}
\bibliography{refs}   

\end{document}